\documentclass{amsart}
\usepackage{amsmath,amssymb,amsthm,bbm,upgreek}
\DeclareMathAlphabet{\mathpzc}{OT1}{pzc}{m}{it}
\setlength{\textheight}{43pc}
\setlength{\textwidth}{28pc} 
\newcommand{\re}{\mathbb{R}}
\newcommand{\co}{\mathbb{C}}
\newcommand{\cc}{\mathcal{C}}
\newcommand{\aaa}{\mathcal{A}}
\newcommand{\ff}{\mathcal{F}}
\newcommand{\rr}{\mathcal{R}}
\newcommand{\qq}{\mathcal{Q}}
\newcommand{\pp}{\mathcal{P}}
\newcommand{\cl}{\mathpzc{c}}
\newcommand{\jl}{\mathpzc{j}}
\newcommand{\ddd}{\mathcal{D}}
\newcommand{\z}{\bar z}

\newcommand{\tz}{\tilde z}
\newcommand{\btz}{\bar{\tilde z}}

\newcommand{\tw}{\tilde w}
\newcommand{\rp}{\operatorname{Re}}
\newcommand{\ip}{\operatorname{Im}}
\newcommand{\ba}{{\mbox{\boldmath$\alpha$}}}
\newcommand{\sba}{{\mbox{\scriptsize\boldmath$\alpha$}}}
\newcommand{\bbeta}{{\mbox{\boldmath$\beta$}}}
\newcommand{\sbeta}{{\mbox{\scriptsize\boldmath$\beta$}}}
\newcommand{\bg}{{\mbox{\boldmath$\gamma$}}}
\newcommand{\boxx}{\rule{2.12mm}{3.43mm}}

\begin{document}

\title[CR singularities of real fourfolds in $\mathbb{C}^3$]{CR
singularities of real fourfolds in $\mathbb{C}^3$}

\author[A.\ Coffman]{Adam Coffman}

\address{Department of Mathematical Sciences \\ Indiana University -
Purdue University Fort Wayne \\ Fort Wayne, IN 46805-1499}

\email{CoffmanA@ipfw.edu}

\newtheorem{thm}{Theorem}[section]
\newtheorem{prop}[thm]{Proposition}
\newtheorem{lem}[thm]{Lemma}
\newtheorem{cor}[thm]{Corollary}
\newtheorem{conj}[thm]{Conjecture}

\theoremstyle{definition}

\newtheorem{defn}[thm]{Definition}
\newtheorem{notation}[thm]{Notation}
\newtheorem{example}[thm]{Example}

\theoremstyle{remark}
 
\newtheorem*{rem}{Remark}

\numberwithin{equation}{section}

\begin{abstract}
  CR singularities of real $4$-submanifolds in complex $3$-space are
  classified by using local holomorphic coordinate changes to
  transform the quadratic coefficients of the real analytic defining
  equation into a normal form.  The quadratic coefficients determine
  an intersection index, which appears in global enumerative formulas
  for CR singularities of compact submanifolds.
\end{abstract}

\subjclass[2000]{Primary 32V40; Secondary 15A21, 32S05, 32S20}

\keywords{Normal form, CR singularity, real submanifold}

\maketitle

\section{Introduction}\label{sec0}
If a real $4$-manifold $M$ is embedded in $\co^3$, then for each point
$x$ on $M$ there are two possibilities: the tangent $4$-plane at $x$
is either a complex hyperplane in $\co^3$, so $M$ is said to be ``CR
singular'' at $x$, or it is not, so $M$ is said to be ``CR generic''
at $x$.  This article will consider the local extrinsic geometry of a
real analytically embedded $M$ near a CR singular point, by finding
invariants under biholomorphic coordinate changes.  The main result is
a classification of quadratic normal forms for the defining function.
The matrix algebra leading to the classification is worked out in
Section \ref{sec1}, and then summarized in Section \ref{sum} after the
geometric interpretation is developed in Sections \ref{tc} --
\ref{complexification}.

The analysis of normal forms near CR singular points is part of the
program of studying the local equivalence problem for real
$m$-submanifolds of $\co^n$.  In this paper we consider the $m=2n-2$
case (``codimension $2$''), focusing on real $4$-manifolds in $\co^3$,
since the $m=n=2$ case is well-known and larger dimensions seem to
lead to difficult computations.

In Section \ref{tc}, we recall some of Lai's formulas relating the
global topology of a real submanifold to the number of its CR singular
points, counted with sign according to an intersection number.  In
Section \ref{atc}, we derive a simple expression that calculates the
intersection index in terms of the coefficients in the local defining
equation, generalizing the well-known $m=n=2$ case, where CR singular
points of compact surfaces in $\co^2$ can be counted according to
their elliptic or hyperbolic nature as determined by the local Bishop
invariant.  Section \ref{vge} gives some concrete examples of compact
real $4$-manifolds immersed in $\co^3$ or $\co P^3$ to illustrate the
enumerative formulas and local invariants.

\section{Normal forms for CR singularities}\label{sec1}

Let $n\ge2$ and $m=2n-2$, so a real $m$-submanifold $M$ of a complex
$n$-manifold has real codimension $2$.  Considering $n$ in general
shows how the well-known $(m,n)=(2,2)$ case is related to the
higher-dimensional cases, including $(4,3)$.  In this Section, we are
only interested in a small coordinate neighborhood, so we will let the
ambient complex space be $\co^n$, with coordinates $z_1,\ldots,z_n$.
We will also use the abbreviations $z=(z_1,\ldots,z_{n-1})^T$ and
$\vec z=(z_1,\ldots, z_n)^T$ (both column vectors).  The real and
imaginary parts of the coordinate functions are labeled
$z_k=x_k+iy_k$.

\subsection{Local defining equations and transformations}\label{ldet}

\ 

We begin by assuming $M$ is a real analytic $(2n-2)$-submanifold in
$\co^n$ with a CR singularity at some point: the tangent space at that
point is a complex hyperplane.  We are interested in the invariants of
$M$ under local biholomorphic transformations.  By a translation that
moves the CR singular point to the origin $\vec0$, and then a complex
linear transformation of $\co^n$, the tangent $(2n-2)$-plane $T_{\vec0}M$
can be assumed to be the $(z_1,\ldots,z_{n-1})$-subspace.  Then there
is some neighborhood $\Delta$ of the origin in $\co^n$ so that the
defining equation of $M$ in $\Delta$ is in the form of a graph over a
neighborhood $\ddd$ of the origin in the complex subspace $T_{\vec0}M$:
\begin{eqnarray*}
  z_n&=&h(z_1,\z_1,\ldots,z_{n-1},\z_{n-1})=h(z,\z)
\end{eqnarray*}
where $h(z,\z)$ is a complex valued real analytic function defined for
$z\in\ddd\subseteq T_{\vec0}M$, and vanishing to second order at
$z=(0,\ldots,0)^T$.  Once $M$ is in this ``standard position,'' the
complex defining function $h(z,\z)$ is of the following form:
\begin{eqnarray}
  h(z,\z)&=&z^TQz+\z^TRz+\z^TS\z+e(z,\z),\label{eq0}
\end{eqnarray}
where $Q$, $R$, $S$ are complex $(n-1)\times(n-1)$ coefficient
matrices, $z^T$ and $\z^T$ are row vectors, and $e(z,\z)$ is a real analytic
function on $\ddd$ vanishing to third order at $z=(0,\ldots,0)^T$.
The matrices $Q$ and $S$ can be assumed to be complex symmetric.  It
can also be assumed that $\ddd$ is small enough so that the function
$h(z,\z)$ can be expressed as the restriction to
$\{(z,w)\in\ddd\times\ddd:w_k=\z_k\}$ of the multi-indexed series:
\begin{equation}\label{eq-2}
  h(z,w)=z^TQz+w^TRz+w^TSw+\sum_{|\sba|+|\sbeta|\ge3}e^{\sba\sbeta}z^\sba
  w^\sbeta,
\end{equation}
which converges on some set
\begin{equation}\label{eq5}
  \ddd_{\cl}=\{(z,w):|z_k|<\epsilon,|w_k|<\epsilon\}\subseteq\co^{2n-2}
\end{equation}
to a complex analytic function.

\begin{defn}\label{not33}
  A (formal, with multi-indexed complex coefficient $e^{\sba\sbeta}$)
  monomial of the form $e^{\sba\sbeta}z^\sba
  w^\sbeta=e^{\sba\sbeta}z_1^{\alpha_1}\cdots
  z_{n-1}^{\alpha_{n-1}}w_1^{\beta_1}\cdots w_{n-1}^{\beta_{n-1}}$ has
  ``degree''
  $|\ba|+|\bbeta|=\alpha_1+\cdots\alpha_{n-1}+\beta_1+\cdots\beta_{n-1}$.
  A power series (convergent or formal) $e(z,w)=\sum
  e^{\sba\sbeta}z^\sba w^\sbeta$ with $e^{\sba\sbeta}=0$ for all terms
  of degree $|\ba|+|\bbeta|<{\bf d}$ will be abbreviated $e(z,w)=O({\bf d})$.
\end{defn}
The above notation applies to expressions of the form $e(z,\z)$, for
example, the last term in Equation (\ref{eq0}) is $e(z,\z)=O(3)$.

We consider the effect of a coordinate change of the following form:
\begin{eqnarray}
  \tz_1&=&z_1+p_1(z_1,\ldots,z_n)\label{eq1}\\
  &\vdots&\nonumber\\
  \tz_n&=&z_n+p_n(z_1,\ldots,z_n),\nonumber
\end{eqnarray}
abbreviated $\vec\tz=\vec z+\vec p(\vec z)$, where $p_1(\vec
z),\ldots,p_n(\vec z)$ are holomorphic functions defined by series
centered at $\vec0$ with no linear or constant terms.  Since this
transformation of $\co^n$ has its linear part equal to the identity
map, it is invertible on some neighborhood of the origin and preserves
the form of (\ref{eq0}).  In the following calculations, we will
neglect considering the size of that neighborhood, and consider only
points close enough to the origin.

As the first special case of a transformation of the form (\ref{eq1})
to be used, let $p_1(\vec z),\ldots,p_{n-1}(\vec z)$ be identically
zero, so $\tz=z$, and let $p_n(\vec z)$ be a homogeneous quadratic
polynomial in $z_1,\ldots,z_{n-1}$, so $p_n(\vec z)=z^TQ^\prime z$ for
some complex symmetric $(n-1)\times(n-1)$ matrix $Q^\prime$.  Given a
point on $M$ near $\vec0$, its coordinates $\vec z=(z_1,\ldots,z_n)^T$
satisfy $z_n-h(z,\z)=0$.  The new coordinates at that point satisfy:
\begin{eqnarray}
  \tz_n&=&z_n+p_n(\vec
  z)\label{eq4}\\ &=&z^TQz+\z^TRz+\z^TS\z+e(z,\z)+p_n(\vec
  z)\nonumber\\ &=&\tz^T(Q+Q^\prime)\tz+\btz^TR\tz+\btz^TS\btz+e(\tz,\btz).\nonumber
\end{eqnarray}
So, such a quadratic transformation changes the coefficient matrix
$Q$, but all the other coefficients of the new equation, $\tz_n-\tilde
h(\tz,\btz)=0$, are the same.  Choosing $Q^\prime=\bar S-Q$ (and
dropping the tilde notation), the defining equation in the new
coordinates is:
\begin{eqnarray}
  z_n&=&z^T\bar Sz+\z^TRz+\z^TS\z+e(z,\z),\label{eq6}
\end{eqnarray}
so the first and third terms have a real valued sum and $e(z,\z)$ is still
$O(3)$.

Next, we consider some linear transformations of $\co^n$, but only
those which fix, as a set, the complex tangent hyperplane
$T_{\vec0}M=\{z_n=0\}$, so they are of the form
\begin{equation}
  \vec\tz_{n\times1}=C_{n\times n}\vec
z_{n\times1}=\left(\begin{array}{cccc}c_{1,1}&\ldots&c_{1,n-1}&c_{1,n}\\\vdots&&\vdots&\vdots\\c_{n-1,1}&\ldots&c_{n-1,n-1}&c_{n-1,n}\\0&\ldots&0&c_{n,n}\end{array}\right)\vec
z_{n\times1},\label{eq9}
\end{equation}
with complex entries and nonzero determinant, so $c_{n,n}\ne0$.  The
inverse matrix has the block form
$$C^{-1}=\left(\begin{array}{cccc}&&&*\\&A_{(n-1)\times(n-1)}&&\vdots\\&&&*\\0&\ldots&0&c_{n,n}^{-1}\end{array}\right),$$
where the block $A$ in $C^{-1}$ does not depend on the entries
$c_{1,n},\ldots,c_{n,n}$.  In the special case where
$c_{1,n}=\ldots=c_{n-1,n}=0$, $C$ has a block diagonal pattern and so
does its inverse, so $z=A\tz$.  In the coordinate system defined by
such a linear transformation, the new defining equation is
\begin{eqnarray}
  \tz_n&=&c_{n,n}z_n\nonumber\\ &=&c_{n,n}\cdot(z^T\bar
  Sz+\z^TRz+\z^TS\z+e(z,\z))\nonumber\\ &=&c_{n,n}\cdot(\tz^TA^T\bar
  SA\tz+\btz^T\bar A^TRA\tz+\btz^T\bar A^TS\bar A\btz+\tilde
  e(\tz,\btz))\label{eq7},
\end{eqnarray}
where the new higher order part is still real analytic but may have a
different domain of convergence.  

If the coefficients $c_{1,n},\ldots,c_{n-1,n}$ were nonzero, they
would contribute only terms of degree at least $3$, not affecting the
quadratic terms in (\ref{eq7}).  Similarly, allowing a coordinate
change with nonlinear terms, as in (\ref{eq1}), would only introduce
terms of degree at least $3$, or, as in (\ref{eq4}), arbitrarily alter
the first quadratic term.  So, under a general transformation,
\begin{equation}\label{eq-1}
  \vec\tz=C\vec z+\vec p(\vec z)
\end{equation}
(which combines (\ref{eq1}) and (\ref{eq9}), and preserves the
standard position, (\ref{eq0})), the only interesting effect on the
quadratic part of $h(z,\z)$ is that the coefficient matrices are transformed
as:
\begin{equation}\label{eq3}
  (R,S)\mapsto(c_{n,n}\bar A^TRA,c_{n,n}\bar A^TS\bar A).
\end{equation}
The first invariant to notice is the pair
$(\mbox{rank}(R),\mbox{rank}(S))$.  The rank of the concatenated
matrix $(R|S)_{ (n-1)\times(2n-2)}$ is also an invariant under this
action.

The group of invertible matrices $A$ has $(n-1)^2$ complex dimensions,
and the group of scalars $c_{n,n}$ is one-dimensional; however, if $A$
is a real multiple $\lambda$ of the identity matrix $\mathbbm 1$, then
its action can be canceled by choosing $c_{n,n}=\lambda^{-2}$.  So,
there are at most $2((n-1)^2+1)-1=2n^2-4n+3$ real parameters in the
group action.  The coefficient matrices $R$ and $S$ have $(n-1)^2$ and
$(n-1)n/2$ complex dimensions, for a total of $3n^2-5n+2$ real
dimensions.  The number of coefficients always exceeds the number of
parameters in the group action, so we expect infinitely many
equivalence classes of matrix pairs, distinguished by continuous
invariants.

\subsection{Degrees of flatness}\label{df}

We continue with the assumption that $M$ is a real analytic
submanifold of $\co^n$ with real codimension $2$.

\begin{defn}\label{def2.2}
  A manifold $M$ in standard position (\ref{eq0}) has a defining
  function $h(z,\z)$ in a ``quadratically flat normal form'' if the
  quadratic part, $z^TQz+\z^TRz+\z^TS\z$, of its defining function is
  a real valued polynomial.  A manifold $M\subseteq\co^n$ with a CR
  singular point $\vec x\in M$ is ``quadratically flat'' at $\vec x$
  if, after $M$ is put into standard position (\ref{eq0}) by a complex
  affine transformation $\vec\tz=L_{n\times n}\cdot(\vec z-\vec x)$,
  there is a local holomorphic coordinate change (\ref{eq-1}) such
  that in the new coordinates, $M$ has a defining function in a
  quadratically flat normal form.
\end{defn}
The definition of ``quadratically flat normal form'' is equivalent to
$Q=\bar S$ and $R=\bar R^T$ (so $R$ is Hermitian symmetric).
Considering (\ref{eq6}) and the transformation rule (\ref{eq3}), for
$M$ in standard position, the ``quadratically flat'' property is
equivalent to $R$ being a complex scalar multiple of a Hermitian
symmetric matrix.

The notion of quadratic flatness is the ${\bf d}=2$ special case
of the following generalization to higher degree.
\begin{defn}\label{def2.3}
  For ${\bf d}\ge2$, a real analytic manifold $M$ in standard position
  (\ref{eq0}) has a defining function in a ``${\bf d}$-flat normal form'' if
  the defining equation in a neighborhood of $\vec0$
  is $$z_n=h(z,\z)={\bf r}(z,\z)+O({\bf d}+1)$$ for some real valued polynomial
  ${\bf r}(z,\z)$.  A manifold $M\subseteq\co^n$ with a CR singular point
  $\vec x\in M$ is ``${\bf d}$-flat'' at $\vec x$ if, after $M$ is put into
  standard position (\ref{eq0}) by a complex affine transformation
  $\vec\tz=L_{n\times n}\cdot(\vec z-\vec x)$, there is a local
  holomorphic coordinate change (\ref{eq-1}) such that in the new
  coordinates, $M$ has a defining function in a ${\bf d}$-flat normal form.
\end{defn}
\begin{defn}\label{def2.4}
  A real analytic $(2n-2)$-manifold $M\subseteq\co^n$ is ``formally
  flattenable'' at a CR singular point $\vec x\in M$ if it is ${\bf
  d}$-flat at $\vec x$ for every ${\bf d}\ge2$.
\end{defn}
\begin{defn}\label{def2.5}
  A manifold $M\subseteq\co^n$ with a CR singular point $\vec x\in M$
  is ``holomorphically flat'' at $\vec x$ if, after $M$ is put into
  standard position (\ref{eq0}) by a complex affine transformation
  $\vec\tz=L_{n\times n}\cdot(\vec z-\vec x)$, there is a local
  holomorphic coordinate change (\ref{eq-1}) such that in the new
  coordinates, the defining function (\ref{eq0}) is real valued.
\end{defn}
  By the Definition, if $M$ is holomorphically flat near a CR singular
  point, then there is a local coordinate system around the point so
  that a neighborhood of $\vec0$ in $M$ is contained in the real
  hyperplane $\ip(z_n)=0$.  By the well-known normal form result of
  \'E.\ Cartan that a real analytic nonsingular Levi flat hypersurface
  is locally biholomorphically equivalent to a real hyperplane, the
  local notion of $M$ being holomorphically flat at $\vec x$ is
  equivalent to the (more coordinate-free) property that there exists
  a real analytic nonsingular Levi flat hypersurface containing a
  neighborhood of $\vec x$ in $M$.

\subsection{The $m=n=2$ case}\label{mn2}

\ 

When $m=n=2$, $M$ is a real surface in $\co^2$ with a CR singular
point.  For $M$ in standard position (\ref{eq0}), the coefficient
matrices are size $1\times1$, and can be written as complex constant
coefficients.  The action of (\ref{eq3}) becomes
$(R,S)\mapsto(c_{2,2}|\alpha|^2R,c_{2,2}\bar\alpha^2S)$ for nonzero
complex constants $c_{2,2}$ and $\alpha$, where
$A_{1\times1}=(\alpha)$.  If $R\ne0$, then $(R,S)$ can then be
transformed into $(1,\gamma_1)$, $\gamma_1\ge0$.  If $R=0$, then there
are two normal forms: $(0,1)$ and $(0,0)$.  The quadratic normal forms
for the defining function of $M$ are then:
\begin{eqnarray}
  z_2&=&z_1\z_1+\gamma_1\cdot(z_1^2+\z_1^2)+e(z_1,\z_1),\ \gamma_1\ge0\mbox{, or}\label{eq34}\\
  z_2&=&z_1^2+\z_1^2+e(z_1,\z_1)\mbox{, or}\nonumber\\
  z_2&=&e(z_1,\z_1).\nonumber
\end{eqnarray}  
So, $\gamma_1$ is the well-known Bishop invariant (\cite{bishop}) and
the second case is $\gamma_1=+\infty$.  This calculation of the
quadratic normal forms shows that any surface $M$ with a CR singular
point is quadratically flat (Definition \ref{def2.2}) at that point.

The normalization of the cubic terms depends on $\gamma_1$; all the
cases $0\le\gamma_1\le\infty$ are surveyed in \cite{mem} \S 5, and we
recall a few examples here.

For $\gamma_1\in(0,\frac12)\cup(\frac12,1)\cup(1,\infty)$, it was shown
by \cite{mw} that the cubic terms of $e(z,\z)$ can be eliminated by a
holomorphic coordinate change near the origin, and so $M$ is $3$-flat.

Any $M$ with $\gamma_1=\frac12$ is $3$-flat; although there may be some
cubic terms that cannot be eliminated by a holomorphic coordinate
change, such terms can always be made real valued.  For $\gamma_1=1$,
there are some $M$ which are not $3$-flat.

It was also shown by \cite{mw}, and \cite{hk} respectively, that for
$0<\gamma_1<\frac12$, and $\gamma_1=0$, $M$ is holomorphically flat.  It
was proved by \cite{gong} that there exists some $M$ with
$\gamma_1>\frac12$ which is formally flattenable but not holomorphically
flat.

\subsection{The $m=4$, $n=3$ case}\label{mn43}

\subsubsection{A quadratic normal form}\label{qnf}

In this, the main case of this paper, $M$ is a $4$-manifold in
$\co^3$, which we assume is given in the form (\ref{eq6}).  The
quadratic coefficient matrices $R$, $S$ are size $2\times2$, so there
are $7$ independent complex coefficients, and, as previously
calculated, $9$ real parameters in the group action (\ref{eq3}).  One
expects that in general, attempting to put the pair $(R,S)$ into a
normal form will leave $5$ continuous real invariants.

We choose to begin the normalization by considering the action of the
transformation $R_{2\times2}\mapsto c\bar A^TRA$, where $c=c_{3,3}$ is
a nonzero scalar and $A$ is an invertible $2\times2$ complex matrix.
Conveniently, the problem of finding representative matrices for the
orbits of this action has already been solved in \cite{ac}:

\begin{prop}[\cite{ac} Theorem 4.3]\label{thm4.3}
  Given a complex $2\times2$ matrix $R$, there is exactly one of the
  following normal forms $N$ such that $N=c\bar A^TRA$ for some
  nonzero complex $c$ and some invertible complex $A_{2\times2}$:
  \begin{enumerate} \item
  $\left(\begin{array}{cc}1&0\\0&e^{i\theta}\end{array}\right)$,
    $0\le\theta\le\pi$; \item
    $\left(\begin{array}{cc}0&1\\\tau&0\end{array}\right)$,
      $0\le\tau<1$; \item
      $\left(\begin{array}{cc}0&1\\1&i\end{array}\right)$; \item
        $\left(\begin{array}{cc}1&0\\0&0\end{array}\right)$; \item
          $\left(\begin{array}{cc}0&0\\0&0\end{array}\right)$.
  \end{enumerate}  \boxx
\end{prop}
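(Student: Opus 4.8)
The classification concerns the equivalence relation $R \sim c\bar A^T R A$ on complex $2\times 2$ matrices, where $c \in \co^*$ and $A \in GL_2(\co)$. This is a "twisted" version of $*$-congruence: note that $\bar A^T R A$ is $*$-congruence only when the two $A$'s agree, but here one is conjugated and one is not, so it is really the congruence action $R \mapsto B^T R A$ restricted to the subgroup where $B = \bar A$, together with scaling. My plan is to reduce to a small number of cases by tracking how the pair of ranks and a determinant-type quantity transform, then normalize within each case.

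The plan is to proceed as follows. First I would record the two basic invariants: $\operatorname{rank}(R)$ is clearly invariant, and under $R \mapsto c\bar A^T R A$ one has $\det(c\bar A^T R A) = c^2 \overline{\det A}\,\det A\, \det R = c^2 |\det A|^2 \det R$, so the argument of $\det R$ is \emph{not} invariant (since $c^2$ can have any argument) but whether $\det R = 0$ is. So the rank-$\le 1$ cases split off immediately: if $R = 0$ we get normal form (5); if $\operatorname{rank}(R) = 1$, then writing $R = uv^T$ for nonzero column vectors $u,v$, I can choose $A$ to send $v$ to $e_1$ and $\bar A^T$ (equivalently $A$, up to the freedom in choosing $A$ given its action on $v$) to send $u$ to $e_1$ as well — one checks the two conditions are compatible since $u,v$ can be moved independently by the row and column operations — giving $R = \operatorname{diag}(1,0)$, which is normal form (4). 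This uses that $c$ and $|\det A|$ together let us scale the single nonzero entry to $1$.

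For the rank-$2$ (invertible) case, the idea is to consider the matrix $R^{-1}\bar R$, or better the pencil behavior. Since $R$ is invertible, I can use the column action to set $R = c' \bar A^T$ times something — more precisely, I would first use $A$ alone (absorbing $c$) to try to bring $R$ toward $\mathbbm 1$ or a companion-type form. A cleaner route: the action $R \mapsto \bar A^T R A$ with $A$ alone already acts transitively enough that the relevant invariant is the conjugacy-type data of $\bar R^{-1} R$ or the "cosquare." Concretely, if $R \mapsto \bar A^T R A =: N$, then $\bar N = A^T \bar R \bar A$, so $\bar N^{-1} N = \bar A^{-1} \bar R^{-1} (A^T)^{-1} \bar A^T R A = \bar A^{-1}(\bar R^{-1} R) A \cdot (\text{correction})$ — I need to be careful here, but the upshot is that $\bar R^{-1}R$ transforms by a $\co$-linear conjugation twisted by bars, and its similarity invariants (eigenvalues, up to the $c$-scaling and complex conjugation ambiguity) produce the one real parameter $\theta$ in case (1), the parameter $\tau$ in case (2), and the single exceptional Jordan-block case (3). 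Scaling by $c$ kills the modulus of one eigenvalue, conjugation symmetry folds $\theta \in [0, 2\pi)$ down to $[0,\pi]$ and $\tau$ down to $[0,1)$, and the repeated-eigenvalue non-diagonalizable situation gives (3). I would organize this as: (a) $\bar R^{-1}R$ diagonalizable with distinct eigenvalues $\Rightarrow$ case (1) or (2) depending on a reality condition; (b) diagonalizable with equal eigenvalues $\Rightarrow$ falls into (1) with $\theta$ such that... or reduces to $\mathbbm 1$; (c) non-diagonalizable $\Rightarrow$ case (3).

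The main obstacle I anticipate is the bookkeeping in the rank-$2$ case: the action is \emph{not} a group action in the naive sense on $R$ because of the unconjugated $A$ on the right, so standard canonical-form theory (Jordan form, $*$-congruence canonical forms) does not apply off the shelf. I expect to need an explicit argument — probably picking $A$ to clear entries one at a time (first make a column of $R$ equal to a standard basis vector, then use the remaining freedom to normalize the rest), and then carefully enumerating which configurations survive, checking at the end that the five listed forms are pairwise inequivalent by exhibiting a complete invariant (the unordered pair of eigenvalue-ratios of $\bar R^{-1} R$, together with a Jordan-type flag). Verifying the sharp ranges $0 \le \theta \le \pi$ and $0 \le \tau < 1$ (rather than overlapping ranges) and that case (3) is genuinely separate from the boundary $\tau \to 1$ of case (2) is the delicate endgame; since this is quoted as Theorem 4.3 of \cite{ac}, I would ultimately defer the full verification to that reference and here only indicate the reduction.
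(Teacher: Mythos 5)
The paper offers no proof of this Proposition --- it is quoted verbatim from \cite{ac} with the end-of-proof box --- so the only question is whether your sketch is sound, and it is not: your rank-one case contains a genuine error. You claim that for $R=uv^T$ the vectors $u$ and $v$ ``can be moved independently by the row and column operations,'' but they cannot, because the row operation $\bar A^T$ and the column operation $A$ are tied to the same matrix. Writing $c\bar A^TRA=c\,\overline{(A^T\bar u)}\,(A^Tv)^T$, you see that everything is governed by the single invertible map $A^T$ applied to $\bar u$ and to $v$; bringing $R$ to $e_1e_1^T=\mathrm{diag}(1,0)$ would force both $A^T\bar u$ and $A^Tv$ to be multiples of $e_1$, which is possible only when $\bar u$ and $v$ are parallel, i.e.\ when $R$ is a scalar multiple of a rank-one Hermitian matrix. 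Otherwise $R$ is equivalent to $e_2e_1^T\sim\left(\begin{smallmatrix}0&1\\0&0\end{smallmatrix}\right)$, which is normal form (2) with $\tau=0$ --- note that this listed form has rank $1$, so the Proposition itself tells you there are \emph{two} inequivalent rank-one classes. Your argument collapses them into one and would ``prove'' $\left(\begin{smallmatrix}0&1\\0&0\end{smallmatrix}\right)\sim\mathrm{diag}(1,0)$, contradicting the uniqueness assertion.

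Two further corrections to the rank-two discussion. First, $\bar A^TRA=A^*RA$ \emph{is} exactly the standard $*$-congruence action for sesquilinear forms, not a twisted variant of it, so the Horn--Sergeichuk canonical forms (the paper cites \cite{hs} for the companion problem) do apply off the shelf; the only extra feature here is the scalar $c$, which rescales and folds the parameter ranges. Second, the correct similarity invariant is the cosquare $(R^*)^{-1}R=(\bar R^T)^{-1}R$, not $\bar R^{-1}R$: from $N=\bar A^TRA$ one gets $N^*=\bar A^TR^*A$ and hence $(N^*)^{-1}N=A^{-1}(R^*)^{-1}RA$, a genuine similarity, whereas your computation of $\bar N^{-1}N$ leaves an uncancelled factor $(A^T)^{-1}\bar A^T$ in the middle (which is why you got stuck there). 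With the cosquare corrected and the rank-one case split into the Hermitian-multiple and non-Hermitian subcases, your outline would align with the argument in \cite{ac}, to which the paper simply defers.
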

For most values of the invariants $\theta$, $\tau$, these normal forms
are not Hermitian symmetric --- unless $R$ is already a complex
multiple of a Hermitian matrix, one would not expect $c\bar A^TRA$ to
be Hermitian.  So, unlike the $m=n=2$ case from Subsection \ref{mn2},
the quadratic part of the defining function $h(z,\z)$ generally cannot
be made real valued by a holomorphic coordinate change.  For a
manifold in standard position (\ref{eq0}), the following are
equivalent: (I) $M$ is quadratically flat at $\vec0$; (II) $R$ is a
multiple of a Hermitian matrix; (III) $N=c\bar A^TRA$, where $N$ is
one of the following normal forms from the Proposition: Case (1) with
$\theta=0$ or $\pi$, Case (4), or Case (5).

Using $N$ from the above Proposition and introducing the notation
$P=2\bar S_{2\times2}$, (\ref{eq6}) becomes:
\begin{eqnarray}
  \ \ \ \ \ \ \ \ z_3&=&(\z_1,\z_2)N\left(\begin{array}{c}z_1\\z_2\end{array}\right)+\rp\left((z_1,z_2)P\left(\begin{array}{c}z_1\\z_2\end{array}\right)\right)+e(z_1,\z_1,z_2,\z_2).\label{eq12}
\end{eqnarray}

The linear action of $C$ (\ref{eq9}, \ref{eq7}), followed by another
holomorphic transformation (\ref{eq4}), preserves the form of the
defining equation (\ref{eq12}), and acts on the coefficient matrices
by the transformation:
\begin{equation}\label{eq13}
  (N,P)\mapsto(c\bar A^TNA,\bar cA^TPA).
\end{equation}
Since $N$ is already normalized, to find a normal form for
(\ref{eq12}), we consider only pairs $(c,A)$ that preserve $N$:
$N=c\bar A^TNA$.  Since this depends on the nature of the various
matrices $N$ appearing in the Proposition, we proceed in cases.

In each case, let
$P=\left(\begin{array}{cc}a&b\\b&d\end{array}\right)$, with complex
entries $a$, $b$, $d$, and let
$A=\left(\begin{array}{cc}\alpha&\beta\\\gamma&\delta\end{array}\right)$,
with complex entries and nonzero determinant.

Case (1a).  For $N=
\left(\begin{array}{cc}1&0\\0&e^{i\theta}\end{array}\right)$, with
$0<\theta<\pi$, if $N=c\bar A^TNA$, then $c$ is real (this follows
from calculating determinants, for example), and 
$$N=c\bar
A^TNA=c\left(\begin{array}{cc}\bar\alpha&\bar\gamma\\\bar\beta&\bar\delta\end{array}\right)\left(\begin{array}{cc}1&0\\0&e^{i\theta}\end{array}\right)\left(\begin{array}{cc}\alpha&\beta\\\gamma&\delta\end{array}\right)$$
implies 
\begin{eqnarray}
1&=&c\cdot(\alpha\bar\alpha+\gamma\bar\gamma e^{i\theta}),\label{eq14}\\
0&=&\alpha\bar\beta+\gamma\bar\delta e^{i\theta},\nonumber\\
e^{i\theta}&=&c\cdot(\beta\bar\beta+\delta\bar\delta e^{i\theta})\nonumber.
\end{eqnarray}
It follows that $\gamma=\beta=0$, $c=|\alpha|^{-2}$, and
$|\delta|=|\alpha|$.  So the action of (\ref{eq13}) is that $P$ can be
transformed to: $$\bar
cA^TPA=\frac1{|\alpha|^2}\left(\begin{array}{cc}\alpha&0\\0&\delta\end{array}\right)\left(\begin{array}{cc}a&b\\b&d\end{array}\right)\left(\begin{array}{cc}\alpha&0\\0&\delta\end{array}\right)=\left(\begin{array}{cc}a\frac{\alpha^2}{|\alpha|^2}&b\frac{\alpha\delta}{|\alpha\delta|}\\b\frac{\alpha\delta}{|\alpha\delta|}&d\frac{\delta^2}{|\delta|^2}\end{array}\right).$$
When $a$ and $d$ are both nonzero, $\alpha$ and $\delta$ can be
chosen to rotate them onto the positive real axis.  The value of $b$
cannot be normalized any further except that $P$ with positive $a$,
$d$, and complex $b$ is equivalent to the matrix with the same $a$,
$d$, but opposite value for $b$.

If $a=0$ or $d=0$, then $\alpha$ and $\delta$ can be chosen to
transform $P$ into a matrix with all non-negative entries.

Case (1b).  For $N=\left(\begin{array}{cc}1&0\\0&1\end{array}\right)$,
  the $\theta=0$ case of Proposition \ref{thm4.3}, $c$ is real as in
  the previous case, and by the calculation analogous to (\ref{eq14})
  with $\theta=0$, in fact $c$ is positive, so
  $c=\frac{+1}{|\det(A)|}$.  The equation $N=c\bar
  A^TNA=\overline{(\sqrt{c}A)}^TN(\sqrt{c}A)$ shows $A$ is a real
  multiple of a unitary matrix, and conversely if $A=rU$ for some real
  $r$ and unitary $U$, then $(c,A)=(r^{-2},rU)$ stabilizes $N$.  So
  the action of (\ref{eq13}) is that $P$ can be transformed to: $$\bar
  cA^TPA=\frac1{|\det(A)|}A^TPA=r^{-2}(rU)^TP(rU)=U^TPU.$$ The normal
  form problem for $P$ is thus reduced to finding a normal form for a
  complex symmetric $2\times2$ matrix under the relation of congruence
  by a unitary matrix.  This problem has a well-known solution by
  Takagi (\cite{hj1} \S 4.4, see also Theorem 5 of \cite{hua}), which
  says that a complex symmetric matrix has a diagonal normal form
  under unitary congruence, with non-negative real entries.  These
  entries can be re-ordered by a unitary transformation, so a normal
  form is $\left(\begin{array}{cc}a&0\\0&d\end{array}\right)$ with
    $0\le a\le d$.

Case (1c).  For
$N=\left(\begin{array}{cc}1&0\\0&-1\end{array}\right)$, the
$\theta=\pi$ case of Proposition \ref{thm4.3}, $c$ is real.  Briefly
neglecting $c$, we consider the group of invertible matrices $A$
preserving $N$.  The condition $\bar A^TNA=N$ is equivalent to $AN\bar
A^T=N$.  For any symmetric coefficient matrix $P$, define an
auxiliary matrix $B=\bar P^TNP$, which is Hermitian symmetric.  Then
the congruence action $P^\prime=A^TPA$ transforms the product:
$$B^\prime=\overline{P^\prime}^TNP^\prime=\overline{(A^TPA)}^TN(A^TPA)=\bar
A^T\bar P^T\bar ANA^TPA=\bar A^T\bar P^TNPA,$$ so $B^\prime$ is
related to $B$ by Hermitian congruence.  So, the action (\ref{eq13})
of $A$ on $(N,P)$ (with $c=1$) has been temporarily replaced by the
action of simultaneous Hermitian congruence on the pair $(N,B)$ of
Hermitian symmetric matrices.  This normal form problem is considered
by \cite{hj1}, and more recently by \cite{hs} and \cite{lr}.
Recalling that $N$ is Hermitian congruent to the matrix
$N^\prime=\left(\begin{array}{cc}0&1\\1&0\end{array}\right)$ (the
$\tau\to1^-$ limit of Case (2) from the Proposition), the result from
\cite{hj1}, \cite{hs}, \cite{lr} is that for any Hermitian $B$, the
pair $(N,B)$ is equivalent under simultaneous Hermitian congruence to
exactly one pair from the following list:
\begin{itemize}
  \item $\left(\left(\begin{array}{cc}1&0\\0&-1\end{array}\right),\left(\begin{array}{cc}k_1&0\\0&k_2\end{array}\right)\right)$, $k_1,k_2\in\re$;
  \item $\left(\left(\begin{array}{cc}0&1\\1&0\end{array}\right),\left(\begin{array}{cc}0&k\\k&1\end{array}\right)\right)$, $k\in\re$;
  \item $\left(\left(\begin{array}{cc}0&1\\1&0\end{array}\right),\left(\begin{array}{cc}0&x+iy\\x-iy&0\end{array}\right)\right)$, $x\in\re$, $y>0$.
\end{itemize}
We continue with Case (1c) by splitting into subcases corresponding to
the above three intermediate normal forms.

Case (1ci).  There is some nonsingular matrix $A$ so that the
Hermitian pair $(N,\bar P^TNP)$ is simultaneously diagonalized and
$P^\prime=A^TPA$ is a complex symmetric matrix
$\left(\begin{array}{cc}a&b\\b&d\end{array}\right)$ (using the same
  place-holding letters as in Case (1a) even though $P$ has been
  transformed once already) satisfying
\begin{equation}\label{eq17}
B^\prime=\overline{P^\prime}^TNP^\prime=\overline{\left(\begin{array}{cc}a&b\\b&d\end{array}\right)}^T\left(\begin{array}{cc}1&0\\0&-1\end{array}\right)\left(\begin{array}{cc}a&b\\b&d\end{array}\right)=\left(\begin{array}{cc}k_1&0\\0&k_2\end{array}\right).
\end{equation}
By another transformation, of the form $e^{i\xi}{\mathbbm 1}$ (which
does not affect the pair $(N,B^\prime)$), we may assume that the entry
$b$ of $P^\prime$ satisfies $b\ge0$.  It then follows from expanding
(\ref{eq17}) that the entries of $P^\prime$ must satisfy either $b=0$
or $d=\bar a$.  In the $b=0$ case, a transformation of the form
$\left(\begin{array}{cc}\alpha&0\\0&\delta\end{array}\right)$, with
  $|\alpha|=|\delta|=1$, preserves $N$ and puts $P^\prime$ into a
  diagonal normal form with non-negative real entries.  By a
  transformation of the form
  $(c,A)=\left(-1,\left(\begin{array}{cc}0&i\\i&0\end{array}\right)\right)$,
    these entries can be interchanged, so a unique normal form is
    $\left(\begin{array}{cc}a&0\\0&d\end{array}\right)$ with $0\le
      a\le d$.

In the $d=\bar a$ case, the same type of diagonal transformation with
$\delta=1/\alpha$ puts $P^\prime$ into the form
$\left(\begin{array}{cc}a&b\\b&a\end{array}\right)$ with $b>0$ and
  $a\ge0$.  However, this simplifies even further, and is not always
  different from the previous ($b=0$) case.  When $0<b<a$, a
  transformation of the form
  $$(c,A)=\left(\frac{-1}{2\sqrt{a^2-b^2}(-a+\sqrt{a^2-b^2})},\left(\begin{array}{cc}b&\!\!-a+\sqrt{a^2-b^2}\\-a+\sqrt{a^2-b^2}\!\!&b\end{array}\right)\right)$$
    preserves $N$ and diagonalizes $P^\prime$ to
    $\sqrt{a^2-b^2}\cdot{\mathbbm 1}$.  When $0\le a<b$, $N$ and
    $P^\prime$ cannot be simultaneously diagonalized, but a
    transformation of the
    form $$(c,A)=\left(\frac{1}{4b\sqrt{b^2-a^2}},\left(\begin{array}{cc}-a+i(b+\sqrt{b^2-a^2})&b-\sqrt{b^2-a^2}-ia\\b-\sqrt{b^2-a^2}-ia&-a+i(b+\sqrt{b^2-x^2})\end{array}\right)\right)$$
    preserves $N$ and takes $P^\prime$ to
    $\frac{-i}b\sqrt{b^2-a^2}(a-i\sqrt{b^2-a^2})N^\prime$, which can
    be rotated by $A=e^{i\theta}{\mathbbm 1}$ to
    $\sqrt{b^2-a^2}N^\prime$.  In the $0<a=b$ case, $P^\prime$ has
    rank $1$ but $(N,P^\prime)$ is not simultaneously diagonalizable,
    so it is inequivalent to the $b=0$ case with rank $1$, where
    $0=a<d$.  A transformation of the
    form $$(c,A)=\left(\frac1{4a},\left(\begin{array}{cc}1+a&1-a\\1-a&1+a\end{array}\right)\right)$$
      preserves $N$ and normalizes $a$ to $1$.

Case (1cii).  If there is no transformation simultaneously
diagonalizing $(N,B)$, then there is some nonsingular matrix $A$ so
that $\bar A^TNA=N^\prime$, $A^TPA=P^\prime$, and $B^\prime=\bar
A^TBA$ equals either the second or third normal form from the above
list --- in this subcase we consider the second.  The property $\bar
A^TNA=N^\prime$ is equivalent to $\bar AN^\prime A^T=N$,
so $$B^\prime=\bar A^TBA=\bar A^T\bar P^TNPA=\bar A^T\bar P^T\bar
AN^\prime A^TPA=\overline{(A^TPA)}^TN^\prime(A^TPA).$$ With notation
as in the previous case, $P^\prime=A^TPA$ is a complex symmetric
matrix $\left(\begin{array}{cc}a&b\\b&d\end{array}\right)$ satisfying
\begin{equation}\label{eq18}
B^\prime=\overline{P^\prime}^TN^\prime
P^\prime=\overline{\left(\begin{array}{cc}a&b\\b&d\end{array}\right)}^T\left(\begin{array}{cc}0&1\\1&0\end{array}\right)\left(\begin{array}{cc}a&b\\b&d\end{array}\right)=\left(\begin{array}{cc}0&k\\k&1\end{array}\right).
\end{equation}
By another transformation, of the form $e^{i\xi}{\mathbbm 1}$ (which
does not affect the pair $(N^\prime,B^\prime)$), we may assume that
the entry $a$ of $P^\prime$ satisfies $a\ge0$.  However, it follows
from expanding the product in (\ref{eq18}) that there are no solutions
of (\ref{eq18}) with $a>0$, so $a=0$.  Then (\ref{eq18}) becomes
\begin{equation*}
\overline{\left(\begin{array}{cc}0&b\\b&d\end{array}\right)}^T\left(\begin{array}{cc}0&1\\1&0\end{array}\right)\left(\begin{array}{cc}0&b\\b&d\end{array}\right)=\left(\begin{array}{cc}0&b\bar b\\b\bar b&b\bar d+\bar bd\end{array}\right)=\left(\begin{array}{cc}0&k\\k&1\end{array}\right),
\end{equation*}
so neither $b$ nor $d$ is $0$.  By yet another scalar transformation,
we may assume that $b>0$, without changing the RHS of the above
equation, so $b$ is an invariant determined by $k$, and the equality
of entries $b\cdot(d+\bar d)=1$ implies $\rp(d)\ne0$.  A
transformation
$(c,A)=\left(\frac1r,\left(\begin{array}{cc}1&is\\0&r\end{array}\right)\right)$,
  with $r$ and $s$ real, preserves $N^\prime$ and transforms
  $P^\prime=\left(\begin{array}{cc}0&b\\b&d\end{array}\right)$ into
    $\left(\begin{array}{cc}0&b\\b&2bis+rd\end{array}\right)$.  Since
      $\rp(d)\ne0$ and $b>0$, $r$ and $s$ can be chosen to normalize
      $d$ to $1$.

Case (1ciii).  The third case starts with the same steps as the previous case, with $\bar A^TNA=N^\prime$ and $A^TPA=P^\prime=\left(\begin{array}{cc}a&b\\b&d\end{array}\right)$ satisfying
\begin{equation}\label{eq19}
B^\prime=\overline{P^\prime}^TN^\prime
P^\prime=\overline{\left(\begin{array}{cc}a&b\\b&d\end{array}\right)}^T\left(\begin{array}{cc}0&1\\1&0\end{array}\right)\left(\begin{array}{cc}a&b\\b&d\end{array}\right)=\left(\begin{array}{cc}0&x+iy\\x-iy&0\end{array}\right),
\end{equation}
with $y>0$.  By another transformation, of the form
$e^{i\xi}{\mathbbm 1}$ (which does not affect the pair
$(N^\prime,B^\prime)$), we may assume that the entry $a$ of $P^\prime$
satisfies $a\ge0$.  Then it follows from expanding the product in
(\ref{eq19}) that $a>0$, $b=0$, and $ad=x+iy$, an invariant quantity.
A transformation
$(c,A)=\left(a,\left(\begin{array}{cc}1/a&0\\0&1\end{array}\right)\right)$
preserves $N^\prime$ and transforms
$P^\prime=\left(\begin{array}{cc}a&0\\0&d\end{array}\right)$ into
$\left(\begin{array}{cc}1&0\\0&ad\end{array}\right)=\left(\begin{array}{cc}1&0\\0&x+iy\end{array}\right)$.

Case (2a).  For $N=\left(\begin{array}{cc}0&1\\
\tau&0\end{array}\right)$, with $0<\tau<1$, if $N=c\bar A^TNA$,
then $c$ is real, and
$$N=c\bar
A^TNA=c\left(\begin{array}{cc}\bar\alpha&\bar\gamma\\\bar\beta&\bar\delta\end{array}\right)\left(\begin{array}{cc}0&1\\
\tau&0\end{array}\right)\left(\begin{array}{cc}\alpha&\beta\\\gamma&\delta\end{array}\right)$$
implies
\begin{eqnarray}
0&=&\alpha\bar\gamma\tau+\gamma\bar\alpha,\label{eq16}\\
0&=&\beta\bar\delta\tau+\bar\beta\delta,\nonumber\\
1&=&c\cdot(\beta\bar\gamma\tau+\bar\alpha\delta),\nonumber\\
\tau&=&c\cdot(\gamma\bar\beta+\alpha\bar\delta\tau)\nonumber.
\end{eqnarray}
It follows that $\gamma=\beta=0$, $\bar\alpha\delta$ is real, and
$c=(\bar\alpha\delta)^{-1}$.  So the action of (\ref{eq13}) is that
$P$ can be transformed to: $$\bar
cA^TPA=\frac1{\bar\alpha\delta}\left(\begin{array}{cc}\alpha&0\\0&\delta\end{array}\right)\left(\begin{array}{cc}a&b\\b&d\end{array}\right)\left(\begin{array}{cc}\alpha&0\\0&\delta\end{array}\right)=\left(\begin{array}{cc}a\alpha/\bar\delta&b\alpha/\bar\alpha\\b\alpha/\bar\alpha&d\delta/\bar\alpha\end{array}\right).$$
        When $b\ne0$, $\alpha$ can be chosen to rotate it onto the
        positive real axis.  Then, using real $\alpha$ and $\delta$,
        $a$ can be either scaled onto the unit circle or the origin;
        if $a=0$, then $d$ can be scaled onto the unit circle or the
        origin.  The resulting normal form is unique except that
        $(a,d)$ is equivalent to the pair $(-a,-d)$.

If $b=0$ and $a\ne0$, then $\alpha$ and $\delta$ can be chosen to
transform $a$ into $1$, leaving $d\in\co$ as an invariant.  If
$b=a=0$, $\alpha$ and $\delta$ can be chosen to transform $d$ into $1$
or $0$.

Case (2b).  For $N=\left(\begin{array}{cc}0&1\\0&0\end{array}\right)$,
  the $\tau=0$ case of Proposition \ref{thm4.3}, if $N=c\bar A^TNA$,
  then $c$ does not have to be real, but the calculation is similar to
  the previous (2a) case.  The $\tau=0$ analogue of (\ref{eq16})
  implies $\gamma=\beta=0$ and $c=(\bar\alpha\delta)^{-1}$.  So the
  action of (\ref{eq13}) is that $P$ can be transformed to: $$\bar
  cA^TPA=(\alpha\bar\delta)^{-1}\left(\begin{array}{cc}\alpha&0\\0&\delta\end{array}\right)\left(\begin{array}{cc}a&b\\b&d\end{array}\right)\left(\begin{array}{cc}\alpha&0\\0&\delta\end{array}\right)=\bar\delta^{-1}\left(\begin{array}{cc}a\alpha&b\delta\\b\delta&d\frac{\delta^2}{\alpha}\end{array}\right).$$
  When $b\ne0$, $\delta$ can be chosen to rotate it onto the positive
  real axis.  Then, using real $\delta$ and a complex number $\alpha$,
  $d$ can be transformed to $1$, leaving $a\in\co$ as an invariant, or
  to $0$ and then $a$ can be transformed to $1$ or $0$.

If $b=0$ and $d\ne0$, then $\alpha$ and $\delta$ can be chosen to
transform $d$ into $1$ and $a$ to a non-negative invariant.  If
$b=d=0$, $\alpha$ and $\delta$ can be chosen to transform $a$ into $1$
or $0$.

Case (3).  For $N=\left(\begin{array}{cc}0&1\\1&i\end{array}\right)$,
  the group of
  $\left(c,\left(\begin{array}{cc}\alpha&\beta\\\gamma&\delta\end{array}\right)\right)$
  such that $N=c\bar A^TNA$ is exactly the set with $\gamma=0$,
  $\alpha=\delta\ne0$, $\alpha\bar\beta+\bar\alpha\beta=0$, and
  $c|\alpha|^2=1$.  Then $$\bar
  cA^TPA=\frac1{|\alpha|^2}\left(\begin{array}{cc}a\alpha^2&a\alpha\beta+b\alpha^2\\a\alpha\beta+b\alpha^2&a\beta^2+2b\alpha\beta+d\alpha^2\end{array}\right).$$
  If $a\ne0$, then $\beta$ can eliminate $b$, $\alpha$ can rotate $a$
  to the positive real axis, and the complex number in the $d$
  position is an invariant.  If $a=0$ and $b\ne0$, then $\beta$ can
  eliminate $d$ and $\alpha$ can rotate $b$ to the positive real axis.
  If $a=b=0$, then $\alpha$ can rotate $d$ to the non-negative real
  axis.

Case (4).  For $N=\left(\begin{array}{cc}1&0\\0&0\end{array}\right)$,
  the group of
  $\left(c,\left(\begin{array}{cc}\alpha&\beta\\\gamma&\delta\end{array}\right)\right)$
  such that $N=c\bar A^TNA$ is exactly the set with $\beta=0$,
  $\delta\ne0$, and $c|\alpha|^2=1$.  The entry $d$ of $P$ can be
  normalized to $1$ or $0$.  In the $d=1$ case, $\gamma$ can eliminate
  any $b$ entry, and $\alpha$ can rotate $a$ onto the non-negative
  real axis.  In the $d=0$ case, if $b\ne0$, then $(a,b)$ can be
  normalized to $(0,1)$; if $b=0$, then $\alpha$ can rotate $a$ onto
  the non-negative real axis.

Case (5).  Under an arbitrary congruence transformation, the rank is
the only invariant of a complex symmetric matrix $P$ under a
congruence transformation $A^TPA$.  The three normal forms are:
  \begin{itemize} \item
  $P=\left(\begin{array}{cc}1&0\\0&1\end{array}\right)$; \item
      $P=\left(\begin{array}{cc}1&0\\0&0\end{array}\right)$; \item
        $P=\left(\begin{array}{cc}0&0\\0&0\end{array}\right)$.
  \end{itemize}

All these results on normal forms for the matrix pairs $(N,P)$ are
summarized by Table 1 in Section \ref{sum}.

\subsubsection{An alternative quadratic normal form}

Instead of choosing to normalize the $R$ matrix first in (\ref{eq6}),
we could have chosen to normalize the symmetric matrix $S$.  Then the
calculations start off in a simpler way, since first applying the congruence transformation to the complex symmetric matrix $S$, the three normal forms for $\bar A^TS\bar A$ are exactly as in the above Case (5).

In the first case, $S={\mathbbm 1}$, the normalization problem for
$(R,S)$ reduces to the problem of finding a normal form for $R$ under
the action $R\mapsto c\bar A^TRA$, where $(c,A)$ satisfies $c\bar
A^T\bar A={\mathbbm 1}$.  This is the generic form of $S$, so one
still expects five continuous real parameters in any collection of
representative matrix pairs, but we do not attempt to find such normal
forms.

In the second case, the pair $(c,A)$ stabilizing $S$ and acting on $R$
satisfies $$c\bar
A^T\left(\begin{array}{cc}1&0\\0&0\end{array}\right)\bar
A=\left(\begin{array}{cc}1&0\\0&0\end{array}\right),$$ leading to
another normal form problem for $R$ which we again do not pursue.

In the last case, where $S$ is the zero matrix, the $R$ matrix can
then be put into one of the normal forms $N$ from Proposition
\ref{thm4.3}.

For the special case where $R$ is Hermitian, \cite{iz} gives a list of
$2\times2$ normal forms for $(R,S)$, following this approach of
normalizing $S$ first.

\subsubsection{One example of a cubic normal form}\label{cnf}

With $M$ in standard position and the quadratic part of the defining
function in normal form (\ref{eq12}), we can consider its cubic terms.
In the
expansion $$z_3=(\z_1,\z_2)N\left(\begin{array}{c}z_1\\z_2\end{array}\right)+\rp\left((z_1,z_2)P\left(\begin{array}{c}z_1\\z_2\end{array}\right)\right)+e_3(z,\z)+e(z,\z),$$
    $e_3(z,\z)$ is the cubic part, and $e(z,\z)=O(4)$.  In particular,
\begin{eqnarray}
  e_3&=&e^{3000}z_1^3+e^{2100}z_1^2\z_1+e^{1200}z_1\z_1^2+e^{0300}\z_1^3\label{eq29}\\
  &&+e^{2010}z_1^2z_2+e^{1110}z_1\z_1z_2+e^{0210}\z_1^2z_2+e^{2001}z_1^2\z_2\nonumber\\
  &&+e^{1101}z_1\z_1\z_2+e^{0201}\z_1^2\z_2+e^{1020}z_1z_2^2+e^{0120}\z_1z_2^2\nonumber\\
  &&+e^{1011}z_1z_2\z_2+e^{0111}\z_1z_2\z_2+e^{1002}z_1\z_2^2+e^{0102}\z_1\z_2^2\nonumber\\
  &&+e^{0030}z_2^3+e^{0021}z_2^2\z_2+e^{0012}z_2\z_2^2+e^{0003}\z_2^3.\nonumber
\end{eqnarray}
The holomorphic coordinate changes that fix the origin and preserve
the standard position of $M$ are of the form
\begin{eqnarray}
  \tz_1&=&c_{11}z_1+c_{12}z_2+c_{13}z_3+p_1^{20}z_1^2+p_1^{11}z_1z_2+p_1^{02}z_2^2\label{eq27}\\
  \tz_2&=&c_{21}z_1+c_{22}z_2+c_{23}z_3+p_2^{20}z_1^2+p_2^{11}z_1z_2+p_2^{02}z_2^2\nonumber\\
  \tz_3&=&c_{33}z_3+p_3^{200}z_1^2+p_3^{110}z_1z_2+p_3^{020}z_2^2+p_3^{101}z_1z_3+p_3^{011}z_2z_3\nonumber\\
       &&+p_3^{300}z_1^3+p_3^{210}z_1^2z_2+p_3^{120}z_1z_2^2+p_3^{030}z_2^3.\nonumber
\end{eqnarray}
The linear coefficients are as in (\ref{eq9}); the $p$ coefficients
are from (\ref{eq1}).  Assigning to each monomial
$z_1^{\alpha_1}z_2^{\alpha_2}z_3^{\alpha_3}$ a ``weight''
$\alpha_1+\alpha_2+2\alpha_3$, including terms in (\ref{eq27}) of
higher weight, such as $z_3^2$, would not contribute any changes to
the quadratic or cubic coefficients of the defining function in the
$\tz$ coordinates.  The effect of a coordinate change (\ref{eq27}) on
the cubic part $e_3$ depends on the quadratic coefficients from $N$
and $P$.  If the quadratic part has already been put into a normal
form, then in attempting to find a normal form for the cubic terms,
one would want to use only holomorphic transformations that preserve
the quadratic normal form, and this subgroup of transformations also
depends on the coefficient matrices $N$ and $P$.  The comprehensive
problem of finding cubic normal forms for every equivalence class of
CR singularities seems to be difficult, so we consider just one
special but interesting case.

Let $M$ be given by this defining equation in standard position and
with quadratic normal form as in Case (1b) from Subsection \ref{qnf}:
\begin{equation}\label{eq26}
  z_3=z_1\z_1+\gamma_1\cdot(z_1^2+\z_1^2)+z_2\z_2+\gamma_2\cdot(z_2^2+\z_2^2)+e_3(z,\z)+e(z,\z).
\end{equation}
The above expression is in a quadratically flat normal form: the
quadratic part is real valued with $0\le\gamma_1\le\gamma_2$, and
$e_3$ is as above.  Now, assume further that $0<\gamma_1<\gamma_2$,
with neither $\gamma_1$ nor $\gamma_2$ equal to $\frac12$ or $1$.

It follows from the normal form result of \cite{mw} (as in Subsection
\ref{mn2}) that there is a transformation of the form
\begin{eqnarray}
  \tz_1&=&z_1+c_{13}z_3+p_1^{20}z_1^2\label{eq30}\\
  \tz_2&=&z_2\nonumber\\
  \tz_3&=&z_3+p_3^{101}z_1z_3+p_3^{300}z_1^3\nonumber
\end{eqnarray}
that eliminates the terms
$e^{3000}z_1^3+e^{2100}z_1^2\z_1+e^{1200}z_1\z_1^2+e^{0300}\z_1^3$
from $e_3$, without changing the quadratic part.  Similarly, a
transformation of $z_2$, $z_3$ can eliminate the cubic terms depending
only on $z_2$, $\z_2$, without changing the quadratic part or
re-introducing the cubic terms in $z_1$, $\z_1$.  This leaves twelve
monomials in (\ref{eq29}) with complex coefficients.

Among the transformations (\ref{eq27}), the subgroup preserving this
partial normal form is given by:
\begin{eqnarray}
  \tz_1&=&c_{11}z_1+p_1^{11}z_1z_2+p_1^{02}z_2^2\label{eq28}\\
  \tz_2&=&c_{22}z_2+p_2^{20}z_1^2+p_2^{11}z_1z_2\nonumber\\
  \tz_3&=&c_{33}z_3+p_3^{210}z_1^2z_2+p_3^{120}z_1z_2^2,\nonumber
\end{eqnarray}
again, omitting terms of higher weight.  The linear coefficients
$c_{11}$, $c_{22}$, $c_{33}$ must be real, and satisfy
$c_{33}=c_{11}^2=c_{22}^2$.  The six complex coefficients of $\vec p$
can be arbitrary, and then after a coordinate change of the form
(\ref{eq28}), the defining equation becomes:
\begin{eqnarray}
  \tz_3&=&\tz_1\btz_1+\gamma_1\cdot(\tz_1^2+\btz_1^2)+\tz_2\btz_2+\gamma_2\cdot(\tz_2^2+\btz_2^2)+\tilde
  e_3(\tz,\btz)+e(\tz,\btz),\nonumber\\ \tilde
  e_3&=&\left(\frac{e^{1110}}{c_{22}}-\frac{p_1^{11}}{c_{11}c_{22}}\right)\tz_1\btz_1\tz_2+\left(\frac{e^{1101}}{c_{22}}-\frac{\overline{p_1^{11}}}{c_{11}c_{22}}\right)\tz_1\btz_1\btz_2\nonumber\\
  &&+\left(\frac{e^{2001}}{c_{22}}-\frac{p_2^{20}}{c_{11}^2}\right)\tz_1^2\btz_2+\left(\frac{e^{0210}}{c_{22}}-\frac{\overline{p_2^{20}}}{c_{11}^2}\right)\btz_1^2\tz_2\label{eq32}\\
  &&+\left(\frac{e^{1011}}{c_{11}}-\frac{p_2^{11}}{c_{11}c_{22}}\right)\tz_1\tz_2\btz_2+\left(\frac{e^{0111}}{c_{11}}-\frac{\overline{p_2^{11}}}{c_{11}c_{22}}\right)\btz_1\tz_2\btz_2\nonumber\\
  &&+\left(\frac{e^{1002}}{c_{11}}-\frac{\overline{p_1^{02}}}{c_{22}^2}\right)\tz_1\btz_2^2+\left(\frac{e^{0120}}{c_{11}}-\frac{p_1^{02}}{c_{22}^2}\right)\btz_1\tz_2^2\nonumber\\
  &&+\left(\frac{e^{2010}}{c_{22}}-\frac{2\gamma_1p_1^{11}}{c_{11}c_{22}}-\frac{2\gamma_2p_2^{20}}{c_{11}^2}+\frac{p_3^{210}}{c_{11}^2c_{22}}\right)\tz_1^2\tz_2\nonumber\\
  &&+\left(\frac{e^{0201}}{c_{22}}-\frac{2\gamma_1\overline{p_1^{11}}}{c_{11}c_{22}}-\frac{2\gamma_2p_2^{20}}{c_{11}^2}\right)\btz_1^2\btz_2\nonumber\\
  &&+\left(\frac{e^{1020}}{c_{11}}-\frac{2\gamma_1p_1^{02}}{c_{22}^2}-\frac{2\gamma_2p_2^{11}}{c_{11}c_{22}}+\frac{p_3^{120}}{c_{11}c_{22}^2}\right)\tz_1\tz_2^2\nonumber\\
  &&+\left(\frac{e^{0102}}{c_{11}}-\frac{2\gamma_1\overline{p_1^{02}}}{c_{22}^2}-\frac{2\gamma_2\overline{p_2^{11}}}{c_{11}c_{22}}\right)\btz_1\btz_2^2.\nonumber
\end{eqnarray}
In general, there are not enough parameters in (\ref{eq28}) to put
these twelve coefficients into a sparse normal form.  We turn to yet a
further special case.

Suppose that after $M$ has been partially normalized as in
(\ref{eq30}), so that
$$e^{3000}=e^{2100}=e^{1200}=e^{0300}=e^{0030}=e^{0021}=e^{0012}=e^{0003}=0,$$
eight of the remaining twelve coefficients satisfy the following
conditions:
\begin{eqnarray}
  e^{1110}&=&\overline{e^{1101}}\label{eq31}\\
  e^{2001}&=&\overline{e^{0210}}\nonumber\\
  e^{1011}&=&\overline{e^{0111}}\nonumber\\
  e^{1002}&=&\overline{e^{0120}}.\nonumber
\end{eqnarray}
This condition holds if (but not only if) the partially normalized
$e_3$ is real valued, so the defining function of $M$ is in a $3$-flat
normal form.  Then, by inspection of (\ref{eq32}), there is a
transformation with complex coefficients $p_1^{02}$, $p_1^{11}$,
$p_2^{11}$, $p_2^{20}$ that can normalize the all the $e^{1110}$,
\ldots, coefficients in (\ref{eq31}) to $0$.  A transformation using
$p_3^{120}$ and $p_3^{210}$ can then change the $e^{2010}$ and
$e^{1020}$ coefficients to any value, in particular, to the complex
conjugates of $e^{0201}$ and $e^{0102}$, so that $e_3$ can be brought
to the following real valued normal form:
\begin{equation}\label{eq33}
  \tilde
  e_3=\overline{e^{0201}}\tz_1^2\tz_2+e^{0201}\btz_1^2\btz_2+\overline{e^{0102}}\tz_1\tz_2^2+e^{0102}\btz_1\btz_2^2.
\end{equation}
The coefficients $e^{0201}$, $e^{0102}$ are not invariants since a
real linear re-scaling by $c_{11}$ and $c_{22}=\pm c_{11}$ is still
possible.

There is a different, more useful, statement about the conditions
under which the above normal form can be achieved:
\begin{thm}\label{prop2.3}
  Suppose the real $4$-manifold $M$ in $\co^3$ is in standard position
  with defining equation in the quadratically flat normal form
  \mbox{\rm (\ref{eq26})} with $0<\gamma_1<\gamma_2$, and neither
  $\gamma_1$ nor $\gamma_2$ equal to $\frac12$, and with cubic terms
  $e_3$ as in \mbox{\rm (\ref{eq29})} that satisfy the conditions
\begin{eqnarray}
  e^{1200}&=&\overline{e^{2100}}\label{eq35}\\
  e^{0021}&=&\overline{e^{0012}}\label{eq36}\\
  e^{1110}&=&\overline{e^{1101}}\nonumber\\
  e^{2001}&=&\overline{e^{0210}}\nonumber\\
  e^{1011}&=&\overline{e^{0111}}\nonumber\\
  e^{1002}&=&\overline{e^{0120}}.\nonumber
\end{eqnarray}
  Then there is a holomorphic coordinate change \mbox{\rm
    (\ref{eq27})} that puts the cubic part into the $3$-flat normal
  form $\tilde e_3$ \mbox{\rm (\ref{eq33})}.
\end{thm}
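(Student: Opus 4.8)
The plan is to show that the hypotheses (\ref{eq35})--(\ref{eq36}) on the cubic coefficients are exactly what is needed so that one can first strip off the ``pure $z_1$'' and ``pure $z_2$'' cubic monomials by the Moser--Webster transformation (\ref{eq30}) \emph{while preserving the eight Hermitian-symmetry conditions} of (\ref{eq31}), and then invoke the explicit transformation rule (\ref{eq32}) to kill the first eight mixed coefficients and normalize the last four. The key observation is that (\ref{eq35}) and (\ref{eq36}) together assert that the $z_1$-only part $e^{3000}z_1^3+e^{2100}z_1^2\z_1+e^{1200}z_1\z_1^2+e^{0300}\z_1^3$ and the $z_2$-only part are \emph{already of the type} ``real cubic plus a term eliminable by the $m=n=2$ transformation'': indeed $e^{1200}=\overline{e^{2100}}$ makes $e^{2100}z_1^2\z_1+e^{1200}z_1\z_1^2$ real, so the only obstruction to reality of the pure part is the holomorphic/antiholomorphic pair $e^{3000}z_1^3+e^{0300}\z_1^3$, and by the \cite{mw} result (valid since $\gamma_1\notin\{0,\tfrac12,1\}$ --- here $\gamma_1>0$ is given) these can be removed by a transformation of the form (\ref{eq30}) in the $(z_1,z_3)$ variables. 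The same argument in $(z_2,z_3)$ uses $\gamma_2\notin\{0,\tfrac12,1\}$.

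First I would verify that the transformation (\ref{eq30}) eliminating the pure-$z_1$ cubic monomials does not destroy the conditions (\ref{eq31}). This is the crux of the argument. The point is that (\ref{eq30}) only involves the coefficients $c_{13},p_1^{20},p_3^{101},p_3^{300}$, all chosen as functions of the pure-$z_1$ data, and its effect on the \emph{mixed} cubic monomials $z_1^{a}\z_1^{b}z_2^{c}\z_2^{d}$ with $c+d=1$ is linear with coefficients built from $\gamma_1$ and the (real!) quadratic $z_2$-part. Because the pure-$z_1$ normalization data can itself be taken to respect conjugation once the pure part has the form ``real $+$ $(e^{3000}z_1^3+\overline{e^{3000}}\,\z_1^3)$'' --- which is precisely what (\ref{eq35}) guarantees, together with $e^{0300}=\overline{e^{3000}}$ which follows because $e_3$ being part of a genuine (complex-valued, but with the stated symmetries) defining function forces the holomorphic and antiholomorphic pure coefficients to be conjugate --- the induced map on the mixed block commutes with the involution sending $z_1^{a}\z_1^{b}z_2\mapsto \z_1^{a}z_1^{b}\z_2$, hence preserves each of the four equalities in (\ref{eq31}). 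One does the same reduction in the $z_2$ variable, checking it does not re-introduce pure-$z_1$ terms (it doesn't, since (\ref{eq30})-type maps in $(z_2,z_3)$ act trivially on monomials with no $z_2,\z_2$). After both reductions we are in the situation of the displayed computation just before the theorem: the eight conditions (\ref{eq31}) hold, and the remaining twelve coefficients transform by (\ref{eq32}).

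At that point the argument is the short one already sketched in the text: by (\ref{eq31}) and inspection of the first four lines of (\ref{eq32}), choose the complex parameters $p_1^{11},p_1^{02},p_2^{11},p_2^{20}$ (each appearing, paired with its conjugate, in exactly one conjugate pair of coefficient expressions) so that all eight of $\tilde e^{1110},\tilde e^{1101},\tilde e^{2001},\tilde e^{0210},\tilde e^{1011},\tilde e^{0111},\tilde e^{1002},\tilde e^{0120}$ become $0$; this is possible because setting, say, $e^{1110}-p_1^{11}/(c_{11}c_{22})=0$ automatically forces $e^{1101}-\overline{p_1^{11}}/(c_{11}c_{22})=0$ by (\ref{eq31}) and the reality of $c_{11},c_{22}$. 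Then use $p_3^{210}$ and $p_3^{120}$, which appear only in the $\tz_1^2\tz_2$ and $\tz_1\tz_2^2$ coefficient expressions respectively and with nonzero (since $c_{11},c_{22}\ne0$) coefficient, to move $\tilde e^{2010}$ and $\tilde e^{1020}$ to $\overline{e^{0201}}$ and $\overline{e^{0102}}$ respectively, leaving $e_3$ in the form (\ref{eq33}).

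The main obstacle, and the one point that requires genuine care rather than bookkeeping, is the compatibility claim in the second paragraph: that the Moser--Webster normalizations in $z_1$ and in $z_2$ can be carried out \emph{simultaneously} and \emph{in a way that respects the conjugation symmetry} so that (\ref{eq31}) survives. I would handle this by tracking the $\mathbb{Z}/2$ action explicitly --- writing the mixed cubic block as a real-linear map and observing that all the structure constants appearing ($\gamma_1$, $\gamma_2$, the entries of $N=\mathbbm 1$) are real, so the map is $\mathbb{Z}/2$-equivariant provided the chosen $p_1^{20},c_{13},p_3^{101},p_3^{300}$ are; and the latter can be arranged because the Moser--Webster equations for eliminating $e^{3000}z_1^3+\overline{e^{3000}}\z_1^3$ (with the mixed $z_1^2\z_1,z_1\z_1^2$ coefficients already real by (\ref{eq35})) have a solution that is itself compatible with conjugation. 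Everything else is the routine linear-algebra count already displayed in (\ref{eq32}).
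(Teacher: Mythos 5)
Your outline coincides with the paper's (eliminate the pure-$z_1$ and pure-$z_2$ cubics, check that the reality conditions (\ref{eq31}) survive, then finish via (\ref{eq32}), whose endgame you describe correctly), but the step you yourself call the crux is handled incorrectly. The equivariance mechanism you propose --- that the induced map on the mixed cubic block commutes with the conjugation involution ``provided the chosen $c_{13},p_1^{20},p_3^{101},p_3^{300}$ are'' compatible with conjugation --- cannot work for $p_3^{101}$. The substitution $\tz_3=z_3+p_3^{101}z_1z_3+\ldots$ contributes $p_3^{101}z_1\cdot\bigl(z_2\z_2+\gamma_2(z_2^2+\z_2^2)\bigr)$ to the mixed block, shifting $e^{1011},e^{1020},e^{1002}$ with no compensating change to $e^{0111},e^{0201},e^{0120}$: a holomorphic coordinate change has no $\overline{p_3^{101}}\,\z_1\z_3$ term. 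So \emph{any} nonzero $p_3^{101}$ destroys (\ref{eq31}), however it is chosen, and likewise for $p_3^{011}$ in the $z_2$ step. (By contrast $c_{13}$ and $p_1^{20}$ genuinely act equivariantly, since $c_{13}z_3$ enters $h$ through $z_1\z_1$ and produces conjugate contributions $-c_{13}\tz_3\btz_1$ and $-\bar c_{13}\tz_1\btz_3$.) The actual role of hypotheses (\ref{eq35}) and (\ref{eq36}), which your argument never uses for this purpose, is precisely to make the pure-term elimination solvable with $p_3^{101}=0$ and $p_3^{011}=0$: with $p_3^{101}=0$ each of $c_{13},p_1^{20}$ moves the pair $(e^{2100},e^{1200})$ by a vector of the form $(\zeta,\bar\zeta)$, so both can be annihilated exactly when $e^{1200}=\overline{e^{2100}}$, and the remaining coefficients $e^{3000},e^{0300}$ are absorbed by the leftover parameters. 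Once $p_3^{101}=p_3^{011}=0$ is forced, the surviving parameters do act equivariantly on the mixed block and (\ref{eq31}) is preserved; this is the content of the paper's warning that the vanishing of these two coefficients is crucial.

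Two further slips. First, your claim that $e^{0300}=\overline{e^{3000}}$ is automatic ``because $e_3$ is part of a genuine defining function'' is false: $h(z,\z)$ is complex-valued and the coefficients of $z_1^3$ and $\z_1^3$ are independent; no such symmetry is among the six hypotheses. Second, you invoke the Moser--Webster elimination under the assumption $\gamma_1\notin\{0,\frac12,1\}$, but the theorem excludes only $\frac12$ (and $0$); the cases $\gamma_1=1$ or $\gamma_2=1$ are permitted and must be covered. The paper handles them by observing that the restricted elimination (with $p_3^{101}=0$, available thanks to (\ref{eq35})) still goes through at $\gamma=1$, whereas your appeal to the unrestricted normal form result does not.
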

\begin{proof}
  The last four out of the above six conditions are copied from
  (\ref{eq31}).  The proof of the Proposition is to proceed as above,
  first eliminating the cubic terms in $z_1,\z_1$ only.  The condition
  (\ref{eq35}) allows this to be done by a transformation of the form
  (\ref{eq30}), but with $p_3^{101}=0$.  (This is possible even in the
  $\gamma_1=1$ case, cf.\ Example 5.6 of \cite{mem}.)  Similarly, the
  cubic terms in $z_2,\z_2$ only can be eliminated by a transformation
  of the form
\begin{eqnarray}
  \tz_1&=&z_1\nonumber\\
  \tz_2&=&z_2+c_{23}z_3+p_2^{02}z_2^2\nonumber\\
  \tz_3&=&z_3+p_3^{030}z_2^3,\label{eq37}
\end{eqnarray}
  without re-introducing any of the previously eliminated terms.
  Again, the condition (\ref{eq36}) (and $\gamma_2\ne\frac12$) means
  that a term $p_3^{011}z_2z_3$ is not needed in (\ref{eq37}).  These
  first two steps may alter the numerical values of the remaining
  coefficients, but the claim is that if the cubic coefficients
  $e^{1110}$, \ldots, $e^{0120}$ satisfy the reality conditions
  (\ref{eq31}) at the start of the process, then the new corresponding
  coefficients will continue to satisfy those conditions.  The
  calculation to verify this is straightforward but omitted; however,
  the assumption $p_3^{101}=p_3^{011}=0$ is crucial: if either were
  nonzero, cubic terms not satisfying (\ref{eq31}) could appear in
  the new defining equation.  The rest of the normalization proceeds
  exactly as above.
\end{proof}
The conclusion of the above Theorem holds in particular when $M$ has
the specified quadratic normal form and is also $3$-flat.

\subsection{Flatness in higher dimensions}\label{hf}

\

  Proposition \ref{thm4.3} showed that for the $m=4$, $n=3$ case
  considered in Subsection \ref{mn43}, $R$ cannot, in general, be put
  into a Hermitian normal form.  Similarly for higher dimensions
  $n>2$, $m=2n-2$, quadratic flatness is a non-generic property for
  codimension 2 manifolds with CR singularities.  Higher degree
  flatness is even more non-generic.

  Consider the quadratically flat case, where $M$ is in standard
  position in $\co^n$ and the coefficient matrix $R$ happens to be
  Hermitian symmetric.  This property of $R$ is preserved by the
  action of $(c_{n,n},A)$ from (\ref{eq9}) if and only if $c_{n,n}$ is
  real --- unless $R$ is the zero matrix, in which case there is no
  such condition on $c_{n,n}$.

  For a nonzero Hermitian matrix $R$, the transformations (\ref{eq3})
  (or (\ref{eq13})) of the defining function that preserve the
  property of being in a quadratically flat normal form have the
  following action: $R\mapsto c_{n,n}\bar A^TRA$, a congruence
  transformation of $R$, followed by real scalar multiplication.  The
  Hermitian property of $R$ is preserved, along with its rank.
  Congruence transformations also preserve the signature $(p,q)$: the
  number of positive, and negative, eigenvalues; $R$ is ``definite''
  if $p=n$ or $q=n$.  Multiplying by a negative scalar $c_{n,n}$
  interchanges the $p$ and $q$ quantities.  The rank $\rho(R)$, the
  number $\sigma(R)=|p-q|$ (from which $p$ and $q$ can be recovered,
  modulo switching), and the property of definiteness (or
  indefiniteness) are invariants of $R$ under the action of this
  transformation group.

  If $R$ is the zero matrix, then $c_{n,n}R$ is still Hermitian even
  if $c_{n,n}$ is not a real number.  $M$ is quadratically flat: the
  quadratic part of $h(z,\z)$ can be made real valued by a transformation of
  the form (\ref{eq4}).

  Including both cases $R\ne0$, $R=0$, we can conclude that if $M$ has
  a defining function $h(z,\z)$ in a quadratically flat normal form, then
  any holomorphic coordinate change that preserves the property that
  $h(z,\z)$ is in a quadratically flat normal form must leave invariant the
  rank $\rho(R)$ and the number $\sigma(R)=|p-q|$.  In particular,
  these quantities are also invariants of a defining function in a
  ${\bf d}$-flat normal form, or which is real valued (the
  holomorphically flat case), under holomorphic coordinate changes
  that preserve the flatness property of the defining function.  The
  determinant of $R$ is real and transforms as: $\det(c_{n,n}\bar
  A^TRA)=c_{n,n}^{n-1}|\det(A)|^2\det(R)$, so the sign of $\det(R)$ is
  also an invariant if $n$ is odd.

  For arbitrary dimensions $m=2n-2$, $n\ge2$, if the
  $(n-1)\times(n-1)$ coefficient matrix $R$ in (\ref{eq6}) is
  Hermitian and definite (so that $R$ transforms by congruence to the
  identity matrix $\mathbbm1$), the result of Takagi is that then
  there is a unitary coordinate change diagonalizing $S$, with real
  entries $0\le\gamma_1\le\ldots\le\gamma_{n-1}$ on the diagonal.
  These numbers are called ``generalized Bishop invariants'' by
  \cite{hy}, since the $0\le\gamma_1<\infty$ normal form (\ref{eq34})
  from Subsection \ref{mn2} is the $n=2$ special case.  The definite
  normal form from Case (1b) of Subsection \ref{qnf} is the $n=3$
  case.  The $R={\mathbbm1}$, $S=0_{(n-1)\times(n-1)}$ case, where
  $z_n=z_1\z_1+\ldots+z_{n-1}\z_{n-1}+O(3)$, is considered by
  \cite{hy}.

  For $P$ complex symmetric and $R$ Hermitian symmetric, but not
  necessarily definite, a description of canonical representatives for
  the equivalence relation $(R,P)\sim(\bar A^TRA,A^TPA)$ is given by
  \cite{e}.  Allowing scalar multiplication, as in (\ref{eq13}), with
  real $c_{n,n}$ as above, is different only in that $(R,P)$ is
  equivalent to $(-R,-P)$ under the action of (\ref{eq13}), while they
  may be inequivalent according to \cite{e}.  In the $2\times2$ case,
  the choices made by \cite{iz} and \cite{e} for canonical forms are
  different from those in the calculations of Subsection \ref{qnf},
  Cases (1b), (1c), (4), (5).  However, it is straightforward to check
  how our list of normal forms, as summarized in Examples \ref{ex7.3}
  -- \ref{ex7.7}, corresponds (modulo scalars) to the systems of
  canonical forms in \cite{iz}, \cite{e}.

\section{Topological considerations}\label{tc}

We recall some well-known general facts on Grassmannian manifolds, and
also recall from \cite{lai} some topological properties of real
$m$-submanifolds of complex $n$-manifolds: the local property of
``general position'' for real submanifolds, and also global properties
measured by characteristic classes.  The Grassmannian constructions
and formulas of \cite{lai}, in the special case of the topology of
real surfaces immersed in complex surfaces ($m=n=2$), are also
reviewed by \cite{BF}, \cite{bohr}, \cite{for}, \cite{abklr} \S 8.5.3.
In this Section, we work with connected, $m$-dimensional, smooth
manifolds $M$, not necessarily real analytic.

\subsection{The Grassmannian construction}\label{tgc}

\ 

For $0<m<2n$, let $G(m,\re^{2n})$ denote the Grassmannian manifold
of $m$-dimensional real linear subspaces of $\re^{2n}$.  The real
dimension of $G(m,\re^{2n})$ is $m\cdot(2n-m)$.

Similarly, let $SG(m,R^{2n})$ denote the manifold of oriented
$m$-dimensional subspaces of $\re^{2n}$.  Its real dimension is also
$m\cdot(2n-m)$. There is a two-to-one covering map $\ff:SG(m,\re^{2n})\to
G(m,\re^{2n})$ given by forgetting the orientation, and an involution
$\rr:SG(m,\re^{2n})\to SG(m,\re^{2n})$ given by reversing the
orientation.

For any immersion $\iota$ of a real $m$-manifold $M$ into $\re^{2n}$,
we can define the Gauss map $\bg:M\to G(m,\re^{2n}):x\mapsto
T_{\iota(x)}M$.  If $M$ has an orientation, then the corresponding map
is denoted $\bg_s:M\to SG(m,\re^{2n})$.

  When $m$ is even and there is a ``complex structure operator,'' $J$,
  a real linear map $\re^{2n}\to\re^{2n}$ such that $J\circ
  J=-{\mathbbm 1}$, then $G(m,\re^{2n})$ contains $\cc$, the set of
  $J$-invariant subspaces.  $\cc$ is a submanifold: it is the image in
  $G(m,\re^{2n})$ of the inclusion embedding of the complex
  Grassmannian manifold $\co G(m/2,\co^n)$ of complex subspaces in
  $\co^n$.  

  In the case $m=2n-2$, the manifold $\co G(n-1,\co^n)$ is
  homeomorphic to the complex projective space $\co P^{n-1}$, and the
  real dimension of $\cc$ is $2(n-1)$, half the dimension $4(n-1)$ of
  $G(2n-2,\re^{2n})$.  The inverse image $\ff^{-1}(\cc)\subseteq
  SG(2n-2,\re^{2n})$ is a disjoint union $\cc^+\cup\cc^-$, where
  $\cc^+$ is the set of oriented subspaces whose orientation agrees
  with that given by the complex structure, and $\cc^-$ is the set of
  subspaces where these orientations are opposite (so they could be
  called ``anticomplex'' subspaces).  Each component $\cc^+$ and
  $\cc^-$ is the image of an embedding of $\co G(n-1,\co^n)$, with
  real dimension, and codimension, equal to $2(n-1)$.  The $\cc^+$
  submanifold has a natural orientation.

For $0<m<2n$ and $0\le\jl\le n$, given an immersion of an
$m$-submanifold $\iota:M\to\re^{2n}=\co^n$, let $N_\jl$ denote the
subset $$N_\jl=\{x\in M:\dim_\re(T_xM\cap J\cdot T_xM)\ge2\jl\},$$ so
at a point $x\in N_\jl$, the tangent space $T_xM$ contains a
$J$-invariant subspace of real dimension $2\jl$, which is a complex
subspace of complex dimension $\jl$ in $\co^n$.

For $m=2n-2$, $N_{n-2}=M$ and $N_{n-1}=\bg^{-1}(\cc)$ is the CR
singular locus.  The immersion is in ``general position'' at $x\in M$
if the gauss map $\bg:M\to G(2n-2,\re^{2n})$ meets $\cc$ transversely
at $\bg(x)$.  $M$ is trivially in general position at all its CR
generic points, and the immersion is said to be in general position if
it is in general position at every point.  Then, by counting
dimensions, an immersion in general position has only isolated CR
singular points, and if $M$ is compact, then the CR singular locus
$N_{n-1}$ will be a finite set.

If $M$ has an orientation, then the CR singular locus is the same set
as $(\ff\circ
\bg_s)^{-1}(\cc)=\bg_s^{-1}(\ff^{-1}(\cc))=\bg_s^{-1}(\cc^+\cup\cc^-)$,
which is a disjoint union $\bg_s^{-1}(\cc^+)\cup \bg_s^{-1}(\cc^-)$.
So, $N_{n-1}=N_{n-1}^+\cup N_{n-1}^-$, where
$N_{n-1}^+=\bg_s^{-1}(\cc^+)$ and $N_{n-1}^-=\bg_s^{-1}(\cc^-)$.  The
local and global notions of ``general position'' as defined previously
are equivalent to the analogous transverse meeting of $\bg_s$ with
$\cc^+\cup\cc^-$ in $SG(2n-2,\re^{2n})$.  At each point of $N_{n-1}^+$
in general position, there is an oriented intersection number, $\pm1$,
of $\bg_s$ with the oriented submanifold $\cc^+$: the intersection
number at $\bg_s(x)$ will be denoted the index, $ind(x)$.

To define the intersection index at a point $x$ of $N_{n-1}^-$, \cite{lai}
makes a choice of orientation for $\cc^-$ which is opposite to that
induced by $\rr:\cc^+\to\cc^-$.  Then $ind(x)$, the intersection number
of $\bg_s$ with $\cc^-$ at $\bg_s(x)$, is equal to the
intersection number of $\rr\circ \bg_s$ with $\cc^+$ at
$\rr(\bg_s(x))$.  Equivalently, if $M^-$ denotes the manifold $M$
with its orientation reversed, with gauss map $\bg_s^\prime:M^-\to
SG(2n-2,\re^{2n})$, then $ind(x)$ is equal to the intersection number
of $\bg_s^\prime$ with $\cc^+$ at $\bg_s^\prime(x)$.

\subsection{Bundle maps}

\ 

More generally, let $M$ be a smooth, oriented manifold with real
dimension $2n-2$, and let $F$ be a smooth, oriented real vector bundle
over $M$ with $2n$-dimensional fibers.  Then the space of oriented
real $(2n-2)$-subspaces of fibers of $F$ forms a ``grassmann bundle''
$SG(2n-2,F)$ over $M$ --- on a local coordinate patch $U$ of $M$ where
$F$ can be trivialized as $U\times\re^{2n}$, the grassmann bundle is
of the form $U\times SG(2n-2,\re^{2n})$.  Suppose $F$ admits a smooth
complex structure operator $J$.  Then for each point $x\in M$, the
$J_x$-invariant subspaces form sets $\cc^+_x$ and $\cc^-_x$ in the
fiber over $x$, giving a pair of smooth bundles $\cc^+$, $\cc^-$ of
complex grassmannians over $M$; each total space has codimension
$2n-2$ in the total space $SG(2n-2,F)$.  If $T$ is another smooth,
oriented real vector bundle over $M$, with $(2n-2)$-dimensional
fibers, then a nonsingular bundle map $\mu:T\to F$ induces a section
$\bg_\mu:M\to SG(2n-2,F):x\mapsto\mu(T_x)$, generalizing the gauss
map.  The transverse intersection of $\bg_\mu$ with $\cc^+$ and
$\cc^-$ defines a notion of ``general position'' for $T$, and the
intersection numbers define the index of generally isolated points $x$
where $\bg_\mu(x)=\mu(T_x)$ is a $J_x$-invariant subspace of $F_x$.

A special case of this bundle construction is where $T$ is the tangent
bundle $TM$ of $M$, and $\iota$ is an immersion of $M$ in an almost
complex manifold $\aaa$, with real dimension $2n$ and a smooth complex
structure operator $J$ on $T\aaa$.  Then the differential of $\iota$
defines a smooth bundle map $\mu:TM\to F=\iota^*T\aaa$, inducing
$\bg_\mu:M\to SG(2n-2,F)$.  A point $x\in M$ is a CR singular point of
$\iota$ if $\bg_\mu(x)=\mu(T_xM)$ is a $J$-invariant subspace of
$T_{\iota(x)}\aaa$.

If, additionally, $T\aaa$ admits a positive definite Riemannian metric
$g$, then there is an oriented real $2$-plane normal bundle $\nu M$
orthogonal to $TM$ in $\iota^*T\aaa$, and further, if $g$ has the
property that $J$ is an isometry with respect to $g$, then $T_xM$ is a
complex hyperplane in $\iota^*T_x\aaa$ if and only if $\nu_xM$ is a
complex line.  Such a metric $g$ can be chosen for any $(T\aaa,J)$,
although we will not be using it except to define the normal bundle.

\subsection{Characteristic class formulas}\label{ccf}

\ 

Following the notation of \cite{for}, for an immersion $\iota:M\to
\aaa$ (or bundle map $\mu:T\to F$) as in the previous Subsection,
we define index sums:
\begin{eqnarray*}
  I_+&=&\sum_{x\in
N_{n-1}^+}ind(x)\\
  I_-&=&\sum_{x\in N_{n-1}^-}ind(x).
\end{eqnarray*}
When $M$ is compact and the immersion (or bundle map) is in general
position, $I_+$ and $I_-$ are finite sums of $\pm1$ terms.  Then
$I_+$, $I_-$, $I_++I_-$, and $I_+-I_-$ are all invariants of the
homotopy class of $\iota$ (or $\mu$).  Reversing the orientation of
$M$ interchanges the values of $I_+$ and $I_-$, so $I_++I_-$ is the
same and $I_+-I_-$ has the opposite sign.

If, instead of following \cite{lai} as in Subsection \ref{tgc}, we
make the other choice of orientation for $\cc^-$, then $I_-$ has the
opposite sign and the quantities $I_++I_-$ and $I_+-I_-$ are switched.
It will be seen in some examples that this is the choice of
orientation that corresponds to index sums appearing in enumerative
formulas of \cite{Webster}, \cite{H-L1}, \cite{H-L2}, \cite{domrin2},
and \cite{me}.

Our notation for characteristic classes in cohomology is copied from
\cite{lai}: denote the euler class of the tangent bundle $TM$ by
$\Omega$, denote the euler class of the normal bundle $\nu M\to M$ of
the immersion by $\widetilde\Omega$, and denote the total chern class
of the pullback bundle $\iota^*(T\aaa,J)\to M$ by
$1+c_1+c_2+\ldots+c_n$.

\begin{prop}[\cite{lai}]
  For an immersion $\iota$ of a compact, oriented $(2n-2)$-manifold
  $M$ in general position in an almost complex $2n$-manifold $\aaa$,
  $$I_+=\int_M\frac12\left(\Omega+\sum_{r=0}^{n-1}\widetilde\Omega^{r}c_{n-1-r}\right).$$
  \boxx
\end{prop}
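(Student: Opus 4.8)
The plan is to read $I_+$ as a transverse intersection number in the grassmann bundle $SG(2n-2,F)$, $F=\iota^*T\aaa$, and then to evaluate it by expressing the Poincar\'e dual of $\cc^+$ through characteristic classes that pull back to $\Omega$, $\widetilde\Omega$ and the $c_j$.

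Since $M$ is compact, so is $SG(2n-2,F)$, and $\cc^+$ is a compact oriented submanifold of codimension $2n-2$; let $\tau_+\in H^{2n-2}(SG(2n-2,F);\re)$ be its Poincar\'e dual. The differential of $\iota$ induces the section $\bg_\mu$ with $\bg_\mu(x)=d\iota(T_xM)$, and $N_{n-1}^+=\bg_\mu^{-1}(\cc^+)$; general position says $\bg_\mu$ is transverse to $\cc^+$, so each $ind(x)$ is the local intersection number and $I_+=\langle\bg_\mu^*\tau_+,[M]\rangle$. It therefore suffices to compute $\bg_\mu^*\tau_+$. Over $SG(2n-2,F)$, with projection $\pi$ to $M$, let $\xi$ be the tautological oriented $(2n-2)$-plane bundle and $\eta=\pi^*F/\xi$ the oriented rank-$2$ quotient (canonically a complex line bundle), while $\pi^*F$ is complex of rank $n$. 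Along $\bg_\mu$ there are canonical identifications $\bg_\mu^*\xi\cong TM$, $\bg_\mu^*\eta\cong\nu M$, $\bg_\mu^*(\pi^*F)\cong\iota^*T\aaa$, hence $\bg_\mu^*e(\xi)=\Omega$, $\bg_\mu^*e(\eta)=\widetilde\Omega$, $\bg_\mu^*c_j(\pi^*F)=c_j$.

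On $\cc^+$ the bundle $\xi$ is $J$-invariant, hence inherits a complex structure, and computing tangent spaces identifies the normal bundle of $\cc^+$ in $SG(2n-2,F)$ with the bundle $\operatorname{Hom}_{\bar\co}(\xi,\eta)\cong\overline{\xi^{*}}\otimes_\co\eta$ of conjugate-linear homomorphisms, complex of rank $n-1$; thus $\tau_+|_{\cc^+}=c_{n-1}(\overline{\xi^{*}}\otimes_\co\eta)$, which, using $c_i(\overline{\xi^{*}})=c_i(\xi)$ and expanding in Chern roots, equals $\bigl(\sum_{r=0}^{n-1}e(\eta)^{r}c_{n-1-r}(\xi)\bigr)|_{\cc^+}$. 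I would then introduce the candidate global class $u=\frac12\bigl(e(\xi)+\sum_{r=0}^{n-1}e(\eta)^{r}c_{n-1-r}(\pi^*F)\bigr)$ in $H^{2n-2}(SG(2n-2,F);\re)$, legitimate since $\pi^*F$ is globally complex and $\xi,\eta$ are globally oriented, and observe that $\bg_\mu^*u=\frac12(\Omega+\sum_{r=0}^{n-1}\widetilde\Omega^{r}c_{n-1-r})$ is exactly the asserted integrand. A short symmetric-function manipulation using $c(\pi^*F)=c(\xi)c(\eta)$ on $\cc^+$ gives $u|_{\cc^+}=c_{n-1}(\overline{\xi^{*}}\otimes_\co\eta)=\tau_+|_{\cc^+}$; on the disjoint component $\cc^-$, where $\xi$ receives the complex orientation reversed so that $e(\xi)|_{\cc^-}=-c_{n-1}(\xi)$ and $e(\eta)|_{\cc^-}=-c_1(\eta)$ for the induced complex structures, the same computation gives $u|_{\cc^-}=0=\tau_+|_{\cc^-}$. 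Hence $u$ and $\tau_+$ have equal restriction to $\cc=\cc^+\sqcup\cc^-$, and the proposition follows as soon as one knows $\bg_\mu^*u=\bg_\mu^*\tau_+$ in $H^{2n-2}(M;\re)$.

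That last equality is the crux, since two classes on $SG(2n-2,F)$ with the same restriction to $\cc$ need not agree and $\bg_\mu$ cannot be homotoped off $\cc$. The plan is to analyze $H^*(SG(2n-2,F);\re)$ by Leray--Hirsch over $\pi$: the fiber is $SG(2n-2,\re^{2n})\cong SG(2,\re^{2n})$, a complex quadric of complex dimension $2n-2$ whose middle group $H^{2n-2}$ is two-dimensional and spanned by the Poincar\'e duals of the fibers $\cc^+_x$ and $\cc^-_x$ (the two rulings). Over a fiber $\pi^*F$ is trivial, so $u$ restricts to $\frac12(e(\xi_0)+e(\eta_0)^{\,n-1})$, and one checks from the cohomology ring of the quadric that this equals the fiber Poincar\'e dual of $\cc^+_x$; since $\tau_+$ has the same fiber restriction, Leray--Hirsch writes $\tau_+-u$ as a sum of module generators coming from positive-degree classes on $M$, and one must verify that this correction pairs to zero against $[M]$. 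Identifying the Leray--Hirsch generators --- in particular the two middle-degree fiber classes and their expressions in $e(\xi),e(\eta)$ --- together with the orientation bookkeeping, is the laborious part and the main obstacle. Finally, conjugating by the involution $\rr$, which swaps $\cc^\pm$ and reverses the fiber orientation (so $e(\xi)\mapsto-e(\xi)$, $e(\eta)\mapsto-e(\eta)$, $c_j(\pi^*F)\mapsto c_j(\pi^*F)$), yields the companion formula for $I_-$ and hence the combinations $I_+\pm I_-$ discussed in Subsection~\ref{ccf}.
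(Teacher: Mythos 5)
First, a point of reference: the paper gives no proof of this Proposition --- it is quoted from \cite{lai} with the end-of-proof box attached to the statement, and only the subsequent Corollary is derived from it. So the comparison is really with Lai's argument, which your plan resembles in outline: interpret $I_+$ as $\langle\bg_\mu^*\tau_+,[M]\rangle$ for $\tau_+$ the Poincar\'e dual of $\cc^+$ in the Grassmann bundle, and identify $\tau_+$ with a universal polynomial in $e(\xi)$, $e(\eta)$, $c_j(\pi^*F)$. Your local checks are correct: using $c(\pi^*F)=c(\xi)c(\eta)$ with $\eta$ a complex line bundle on $\cc^+$ one does get $u|_{\cc^+}=\sum_r e(\eta)^rc_{n-1-r}(\xi)=e\bigl(\operatorname{Hom}_{\bar\co}(\xi,\eta)\bigr)|_{\cc^+}$, and the sign bookkeeping gives $u|_{\cc^-}=0$.

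The proof is nevertheless incomplete at exactly the step you flag as ``the laborious part and the main obstacle,'' namely $\bg_\mu^*u=\bg_\mu^*\tau_+$, and the reductions you offer do not close it. Agreement of $u$ and $\tau_+$ on $\cc^+\sqcup\cc^-$ proves nothing by itself (two classes can restrict equally to a submanifold and differ globally), and agreement on each fiber only determines $\tau_+-u$ modulo the Leray--Hirsch submodule generated by $\pi^*H^{>0}(M)$ times fiber classes; since $\bg_\mu$ is a \emph{section}, a class of the form $\pi^*a\cup b$ pulls back to $a\cup\bg_\mu^*b$, which can pair nontrivially with $[M]$. So as written the argument establishes the formula only up to an undetermined correction term, which is the entire content of the theorem. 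To finish one must actually compute $\tau_+$ in $H^{2n-2}(SG(2n-2,F))$ --- for instance by a Schubert-cell/Gysin computation over the quadric fiber as in Lai's paper, or by exhibiting $\cc^+$ globally as the transverse zero locus of a section of an explicit oriented rank-$(2n-2)$ bundle on the total space whose Euler class is $u$ --- rather than only matching restrictions to $\cc$ and to fibers.
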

\begin{cor}
  With the orientation convention for $\cc^-$ as in \cite{lai},
  \begin{eqnarray*}
    I_-&=&\int_M\frac12\left(\Omega+\sum_{r=0}^{n-1}(-1)^{r+1}\widetilde\Omega^{r}c_{n-1-r}\right),\\
    I_++I_-&=&\int_M\left(\Omega+\sum_{k=0}^{\lfloor n/2\rfloor-1}\widetilde\Omega^{2k+1}c_{n-2k-2}\right),\\
    I_+-I_-&=&\int_M\sum_{k=0}^{\lfloor(n-1)/2\rfloor}\widetilde\Omega^{2k}c_{n-2k-1}.
  \end{eqnarray*}
\end{cor}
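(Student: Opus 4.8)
The plan is to derive all three formulas for $I_-$, $I_++I_-$, and $I_+-I_-$ directly from the Proposition of \cite{lai}, which gives $I_+$, together with the observation that reversing the orientation of $M$ interchanges $I_+$ and $I_-$ while fixing the characteristic classes up to predictable sign changes. First I would set $M^-$ to be $M$ with reversed orientation. Under this operation the euler class $\Omega$ of $TM$ changes sign (an odd-rank-over-$\mathbb Z$ top class; more precisely $\int_{M^-}=-\int_M$), and likewise the euler class $\widetilde\Omega$ of the rank-$2$ normal bundle $\nu M$ is unaffected as a cohomology class but the fundamental class flips. The chern classes $c_j$ of the pullback bundle $\iota^*(T\aaa,J)$ are insensitive to the orientation of $M$. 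Since $I_-(M)=I_+(M^-)$ under the Lai orientation convention for $\cc^-$, applying the Proposition to $M^-$ and pulling the sign of the fundamental class through the integral gives
$$I_-=\int_M\frac12\left(\Omega+\sum_{r=0}^{n-1}(-1)^{r+1}\widetilde\Omega^{r}c_{n-1-r}\right),$$
where the factor $(-1)^{r+1}$ records the sign $-1$ on $\Omega$ together with the sign $(-1)^r$ picked up from the $r$-th power $\widetilde\Omega^r$ under the flip of $[M]$.

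Next I would simply add and subtract the formulas for $I_+$ and $I_-$. Adding them, the terms with even powers of $\widetilde\Omega$ cancel (their coefficients are $+1$ and $-1$) and the terms with odd powers survive with coefficient $1$, while the two copies of $\frac12\Omega$ combine to $\Omega$:
$$I_++I_-=\int_M\left(\Omega+\sum_{k=0}^{\lfloor n/2\rfloor-1}\widetilde\Omega^{2k+1}c_{n-2k-2}\right),$$
the upper limit being dictated by the requirement $2k+1\le n-1$. Subtracting, the $\Omega$ terms and the odd-power $\widetilde\Omega$ terms cancel, and the even-power terms double up against the $\frac12$ to give coefficient $1$:
$$I_+-I_-=\int_M\sum_{k=0}^{\lfloor(n-1)/2\rfloor}\widetilde\Omega^{2k}c_{n-2k-1},$$
with $2k\le n-1$ fixing the range. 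This is purely bookkeeping once the $I_-$ formula is in hand.

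The only genuinely delicate point — the ``hard part'' — is justifying that $I_-$ really is obtained from $I_+$ by this orientation reversal with exactly the sign pattern claimed. This rests on the compatibility of two conventions introduced in Subsection \ref{tgc}: the Lai choice of orientation on $\cc^-$ (opposite to the one induced by the orientation-reversing involution $\rr:\cc^+\to\cc^-$) and the identification of $ind(x)$ for $x\in N_{n-1}^-$ with the intersection number of $\bg_s^\prime$ with $\cc^+$, where $\bg_s^\prime$ is the gauss map of $M^-$. I would check that these two conventions are set up precisely so that $N_{n-1}^-(M)=N_{n-1}^+(M^-)$ as indexed sets, i.e. $I_-(M)=I_+(M^-)$; this is exactly the content of the last two paragraphs of Subsection \ref{tgc}, so the verification amounts to quoting that discussion. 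Everything else is the elementary cohomological sign analysis of orientation reversal on a closed oriented manifold, which I would present but not belabor.
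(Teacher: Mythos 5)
Your overall strategy is the same as the paper's: identify $I_-(M)$ with $I_+(M^-)$ (justified by the orientation conventions of Subsection \ref{tgc}), apply Lai's formula to $M^-$, and then obtain the other two identities by adding and subtracting; that last bookkeeping step is fine. However, the sign analysis in the step you yourself single out as the delicate part is wrong as written. You assert that under orientation reversal of $M$ the class $\widetilde\Omega$ ``is unaffected as a cohomology class'' and that the factor $(-1)^r$ is ``picked up from the $r$-th power $\widetilde\Omega^r$ under the flip of $[M]$.'' The flip of the fundamental class contributes a single overall factor $-1$ to the integral, independent of $r$; it cannot produce $(-1)^r$. With your stated premises ($\Omega\mapsto-\Omega$, $\widetilde\Omega$ fixed, $\int_{M^-}=-\int_M$) you would obtain
$$I_-=\int_M\tfrac12\Bigl(\Omega-\sum_{r=0}^{n-1}\widetilde\Omega^{r}c_{n-1-r}\Bigr),$$
which is not the claimed formula (it already disagrees at the $\widetilde\Omega\,c_{n-2}$ term) and would yield the wrong expressions for $I_++I_-$ and $I_+-I_-$.

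The missing observation --- and the one the paper's proof relies on --- is that the orientation of the normal bundle $\nu M$ is induced from the orientations of $TM$ and of $\iota^*T\aaa$ (the latter fixed by $J$), so reversing the orientation of $M$ also reverses the orientation of $\nu M$ and hence sends $\widetilde\Omega$ to $-\widetilde\Omega$. Then the term $\widetilde\Omega^{r}c_{n-1-r}$ picks up $(-1)^r$ from $(-\widetilde\Omega)^r$ and a further $-1$ from integrating over $M^-$, giving $(-1)^{r+1}$, while the $\Omega$ term picks up $(-1)\cdot(-1)=+1$. With that correction your argument coincides with the paper's proof and the rest of your write-up goes through.
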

\begin{proof}
  $I_-$ is calculated by reversing the orientation of the tangent and
  normal bundles $TM$ and $\nu M$, which switches the sign of $\Omega$
  and $\widetilde\Omega$.  Then, applying the formula from the
  Proposition, integrating over $M^-$ gives the opposite of the
  integral over $M$.
\end{proof}

\begin{example}\label{ex3.3}
  When $\aaa=\co^n$, the chern classes are trivial, so for $M$
  immersed in general position,
  \begin{eqnarray*}
    I_+&=&\int_M\frac12\left(\Omega+\widetilde\Omega^{n-1}\right),\\
    I_-&=&\int_M\frac12\left(\Omega+(-1)^{n}\widetilde\Omega^{n-1}\right).
  \end{eqnarray*}
  If, in addition, $\iota$ is an embedding, then $\widetilde\Omega$ is
  the zero class, so $I_+=I_-=\frac12\upchi(M)$, where $\upchi(M)$ is the
  euler characteristic of $M$ (\cite{lai} Theorem 4.10).
\end{example}

\begin{example}
  For $n=2$, $\iota$ is an immersion of a compact, oriented real
  surface $M$ in a $4$-manifold $\aaa$ with an almost complex
  structure $J$.  Complex or anticomplex points with Lai's index $+1$
  are ``elliptic,'' or ``hyperbolic'' with index $-1$.  Again
  following the notation of \cite{for} and \cite{abklr}, if $e_+$
  (respectively, $e_-$) is the number of elliptic points with
  positively (negatively) oriented complex tangent spaces and $h_+$
  ($h_-$) is the number of positive (negative) hyperbolic points, and
  $e=e_++e_-$, $h=h_++h_-$, then for $M$ in general position,
  \begin{eqnarray*}
  I_+&=&e_+-h_+=\int_M\frac12\left(\Omega+c_1+\widetilde\Omega\right),\\
  I_-&=&e_--h_-=\int_M\frac12\left(\Omega-c_1+\widetilde\Omega\right),\\
  I_++I_-&=&e-h=\int_M\left(\Omega+\widetilde\Omega\right),\\
  I_+-I_-&=&(e_+-e_-)-(h_+-h_-)=\int_Mc_1.
  \end{eqnarray*}

  The last formula was also proved by \cite{Webster} and is a special
  case of a degeneracy locus formula of \cite{H-L2} and
  \cite{domrin2}.  When $\iota$ is an embedding into $\aaa=\co^2$,
  $\widetilde\Omega=0$, so the formulas are
  $e_+-h_+=e_--h_-=\frac12\upchi(M)$, which were known to \cite{bishop}
  and \cite{Wells} in the special cases where $M$ is an embedded
  sphere or torus.  Immersions with double points are considered by
  \cite{BF} and \cite{bohr}.
\end{example}

\begin{example}\label{ex3.5}
  For $n=3$, $\iota$ is an immersion of a compact, oriented real
  $4$-manifold $M$ in a $6$-manifold $\aaa$ with an almost complex
  structure $J$.  For $M$ in general position, we adapt from the
  previous Example the notation $e_+$, $h_+$ for counting elements of
  $N_2^+$, and $e_-$, $h_-$ for elements of $N_2^-$.
  \begin{eqnarray*}
  I_+&=&e_+-h_+=\int_M\frac12\left(\Omega+c_2+\widetilde\Omega
  c_1+\widetilde\Omega^2\right),\\
  I_-&=&e_--h_-=\int_M\frac12\left(\Omega-c_2+\widetilde\Omega
  c_1-\widetilde\Omega^2\right),\\
  I_++I_-&=&e-h=\int_M\left(\Omega+\widetilde\Omega c_1\right),\\
  I_+-I_-&=&(e_+-e_-)-(h_+-h_-)=\int_M\left(c_2+\widetilde\Omega^2\right).
  \end{eqnarray*}

  There are also formulas
  \begin{eqnarray}
    I_+-I_-&=&\int_M\left(c_2+p_1\nu M\right)\label{eq10}\\
    &=&\int_M\left(c_1^2-c_2-p_1TM\right),\label{eq11}
  \end{eqnarray}
  where $p_1$ is the first pontrjagin class of the normal bundle $\nu
  M$ or tangent bundle $TM$.  The equality (\ref{eq10}) follows from
  the well-known identity of characteristic classes
  $\widetilde\Omega^2=p_1\nu M$.  Formula (\ref{eq11}) is a special
  case of the degeneracy locus formulas of \cite{H-L2} and
  \cite{domrin2} --- the calculation establishing the equivalence of
  (\ref{eq10}) and (\ref{eq11}) appears in \cite{me}.
\end{example}

\begin{example}\label{ex3.6}
  Not every compact, oriented $4$-manifold can be immersed in
  $\re^6\cong\co^3$ --- for example, $\co P^2$ cannot (\cite{hirsch}).
  If such a $4$-manifold $M$ is immersed in general position in
  $\co^3$, then $I_++I_-=\upchi(M)$ and
  $$I_+-I_-=\int_Mp_1\nu M=\int_M-p_1TM=-p_1M,$$ the opposite of the
  first pontrjagin number of $M$ (\cite{H-L1}).  The number
  $\int_Mp_1\nu M$ is also three times the algebraic number of triple
  points of an immersion in general position (\cite{herbert}).  A
  compact, oriented $4$-manifold admitting a CR generic immersion in
  $\co^3$ must have $\upchi(M)=p_1M=0$; the converse problem, finding a
  sufficient condition for the existence of a CR generic immersion, is
  considered by \cite{jl}.
\end{example}

\begin{example}\label{ex3.7}
  Consider the complex projective space $\aaa=\co P^3$, and a
  nonsingular, degree ${\boldsymbol d}$ complex hypersurface $Y\subseteq\co P^3$.
  In terms of the hyperplane class $H$ in the cohomology ring of $\co
  P^3$, there are well-known formulas (\cite{gh} \S 1.1, 3.4, 4.6) for
  chern classes: $c_2(T\co P^3|_Y)=6H^2$, $c_1(T\co P^3|_Y)=4H$, and
  $c_1\nu Y={\boldsymbol d}H$.  Since $\int_YH^2={\boldsymbol d}$,
  \begin{eqnarray*}
    \int_Y\left(c_2(T\co P^3|_Y)+(c_1\nu Y)^2\right)&=&6{\boldsymbol d}+{\boldsymbol{d}}^3,\\
    \int_Y\left(c_2TY+c_1\nu Y\cdot c_1(T\co
    P^3|_Y)\right)&=&\upchi(Y)+4{\boldsymbol d}^2.
  \end{eqnarray*}
  Let $M$ be a smooth real submanifold of $\co P^3$ isotopic to $Y$,
  that is, homotopic through a family of smooth embeddings.  Then the
  chern classes of $Y$ pull back to classes on $M$, and if $M$ is in
  general position, then
  \begin{eqnarray*}
    \int_Y\left(c_2(T\co P^3|_Y)+(c_1\nu
    Y)^2\right)&=&\int_M\left(c_2(T\co
    P^3|_M)+\widetilde\Omega^2\right)\\ =6{\boldsymbol d}+{\boldsymbol d}^3&=&I_+-I_-,\\ \int_Y\left(c_2TY+c_1\nu
    Y\cdot c_1(T\co
    P^3|_Y)\right)&=&\int_M\left(\Omega+\widetilde\Omega\cdot c_1(T\co
    P^3|_M)\right)\\ =\upchi(M)+4{\boldsymbol d}^2&=&I_++I_-.
  \end{eqnarray*}
  by the formulas from Example \ref{ex3.5}.  These numbers are always
  positive; there is no CR generic submanifold of $\co P^3$ isotopic
  to a smooth complex hypersurface.  One also expects that
  sufficiently nearby perturbations of $Y$ would have only positively
  oriented complex tangents ($N_2^-=\mbox{\O}$), so $I_-=0$ and we
  recover the formula $\upchi(Y)=\upchi(M)={\boldsymbol d}^3-4{\boldsymbol d}^2+6{\boldsymbol d}$ (\cite{gh}
  p.\ 601).

  In particular, consider the holomorphic embedding given in
  homogeneous coordinates by ${\mathbf c}:\co P^2\to\co
  P^3:[z_0:z_1:z_2]\to[z_0:z_1:z_2:0]$, so the image $Y$ is a complex
  hyperplane with degree ${\boldsymbol d}=1$.  $\co P^2$ considered only
  as a smooth, oriented $4$-manifold (forgetting its complex structure)
  has $\upchi(\co P^2)=3$.  An embedding of $M=\co P^2$ as an oriented
  real submanifold in general position and isotopic to $\mathbf c$ has
  seven complex tangents, counted as an index sum with multiplicity
  according to either the $I_++I_-$ or $I_+-I_-$ sign convention.  In
  Section \ref{vge} we give a concrete example of such an isotopy from
  $\mathbf c$ to a smooth embedding $\co P^2\to\co P^3$ in general
  position with exactly seven CR singular points in $N_2^+$, each with
  index $=+1$: $I_+=e_+=7$, $h_+=I_-=e_-=h_-=0$.
\end{example}

\begin{rem}\label{rem3.8}
  If $\iota:\co P^2\to\co P^3$ is any immersion (or any continuous map
  homotopic to an immersion), then $\iota$ is homotopic to either the
  embedding $\mathbf c$ from the above Example, or the composite
  ${\mathbf c}\circ\kappa$, where $\kappa$ is the orientation-preserving
  involution $[z_0:z_1:z_2]\mapsto[\z_0:\z_1:\z_2]$ (\cite{thomas},
  \cite{lp}).  Equivalently, either $\iota$ or $\iota\circ\kappa$ is
  homotopic to $\mathbf c$.
\end{rem}

\subsection{Local coordinates for the Grassmannian}\label{lcg}

\ 

For $0<m<2n$, each element $v\in G(m,\re^{2n})$ is the image of some
linear map $\re^m\to\re^{2n}$ with standard matrix representation
$X_{2n\times m}$ of rank $m$, and any two linear maps with the same
image are right-equivalent ($X\sim XY$ for invertible $Y_{m\times
  m}$).  If $v^0$ is the $m$-plane $\{(v_1,\ldots,v_m,0,\ldots,0)^T\}$
in $\re^{2n}$, then it is the image of
$X^0=\left(\begin{array}{c}{\mathbbm1}_{m\times
    m}\\0\end{array}\right)_{2n\times m}$, and any elements
  sufficiently near $v^0$ are the image of some linear map whose
  matrix representation can be column-reduced to the form
  \begin{equation}\label{eq20}
  \left(\begin{array}{c}{\mathbbm1}_{m\times m}\\V_{(2n-m)\times
      m}\end{array}\right)_{2n\times m}.
  \end{equation}  The matrices $V$ in a
  neighborhood of the $(2n-m)\times m$ zero matrix form a local
  coordinate chart around $v^0$ in $G(m,\re^{2n})$.  The inverse image
  of a sufficiently small chart under $\ff:SG(m,\re^{2n})\to
  G(m,\re^{2n})$ gives a pair of charts in $SG(m,\re^{2n})$, one
  around each element of $\ff^{-1}(v^0)$, the $m$-plane with its two
  possible orientations.

In the case where $m$ is even and $\re^{2n}$ has coordinates
$(x_1,y_1,\ldots,x_n,y_n)^T$ and a complex structure operator
$$J_{2n\times
  2n}=\left(\begin{array}{ccccc}0&-1&&&\\1&0&&&\\&&\ddots&&\\&&&0&-1\\&&&1&0\end{array}\right),$$
  consider the $m$-plane
$$v^0=\{(x_1,y_1,\ldots,x_{m/2},y_{m/2},0,\ldots,0)^T\}.$$ Since $v^0$
  is $J$-invariant, $v^0\in\cc$.  Any element $v$ of the intersection
  of $\cc$ with the local coordinate chart around $v^0$ would have
  matrix representation
  $X=\left(\begin{array}{c}{\mathbbm1}\\V\end{array}\right)_{2n\times
      m}$, such that $J\cdot X\sim X$.  The equivalence
  \begin{eqnarray*}
    J\cdot\left(\begin{array}{c}{\mathbbm1}\\V\end{array}\right)&=&\left(\begin{array}{c}J_{m\times
          m}\\J_{(2n-m)\times(2n-m)}\cdot
        V\end{array}\right)\\
    \sim\left(\begin{array}{c}J\\J\cdot
          V\end{array}\right)\cdot(-J)_{m\times
            m}&=&\left(\begin{array}{c}{\mathbbm1}\\-J\cdot V\cdot
            J\end{array}\right)
  \end{eqnarray*}
   shows $v\in\cc\iff V=-J\cdot V\cdot J\iff V\cdot J=J\cdot V$, that
   is, $V$ is complex linear with respect to $J_{m\times m}$ and
   $J_{(2n-m)\times(2n-m)}$.

For example, in the $m=4$, $n=3$ case, the coordinates near $v^0$ in the
$8$-dimensional space $G(4,\re^6)$ are of the form
$V=\left(\begin{array}{cccc}a_1&a_2&a_3&a_4\\b_1&b_2&b_3&b_4\end{array}\right)$,
  and the elements of $\cc$ have coordinates $V$
  satisfying 
  \begin{eqnarray*}
    &&\left(\begin{array}{cc}0&-1\\1&0\end{array}\right)\cdot\left(\begin{array}{cccc}a_1&a_2&a_3&a_4\\b_1&b_2&b_3&b_4\end{array}\right)\\
    &=&\left(\begin{array}{cccc}a_1&a_2&a_3&a_4\\b_1&b_2&b_3&b_4\end{array}\right)\cdot\left(\begin{array}{cccc}0&-1&&\\1&0&&\\&&0&-1\\&&1&0\end{array}\right).
  \end{eqnarray*}
  This linear condition on $a_1,\ldots,b_4$ has a $4$-dimensional
  solution subspace $$\{b_1=-a_2,b_2=a_1,b_3=-a_4,b_4=a_3\}.$$

Now consider $v^0$ with its orientation as a complex subspace of
$(\re^6,J)$, so $v^0\in\cc^+\subseteq SG(4,\re^6)$.  Also, instead of
$2\times4$ matrices, we put the eight coordinate functions for
$SG(4,\re^6)$ near $v^0$ in a column vector format,
$(a_1,a_2,a_3,a_4,b_1,b_2,b_3,b_4)^T$.  Then, in this chart around
$v^0$, $\cc^+$ is the image of the linear map $\re^4\to\re^8$ with
matrix representation
$$\left(\begin{array}{cccc}1&0&0&0\\0&1&0&0\\0&0&1&0\\0&0&0&1\\0&-1&0&0\\1&0&0&0\\0&0&0&-1\\0&0&1&0\end{array}\right)=\left(\begin{array}{c}{\mathbbm1}_{4\times4}\\J_{4\times4}\end{array}\right).$$
Returning to the general situation of codimension $2$ in $\co^n$, the
above pattern still holds, so that $\cc^+$ is the column space of
$\left(\begin{array}{c}{\mathbbm1}\\J\end{array}\right)_{2(2n-2)\times(2n-2)}$
in the coordinate chart around $v^0$ in $SG(2n-2,\re^{2n})$.

\section{A reformulation of Garrity's transversality criterion}\label{atc}

Given a real $(2n-2)$-submanifold of $\co^n$ with a CR singular point,
we now consider the problem of determining from the local defining
equation whether $M$ is in general position, as defined in Section
\ref{tc}, and if so, how the local defining equation determines the
intersection index.  The transversality problem was also considered by
\cite{g} for real $(2n-2)$-submanifolds of $\co^n$, using different
methods but arriving at an equivalent result.  Our result (Theorem
\ref{thm5.1}) relates transversality to an expression in terms of the
coefficient matrix notation from Section \ref{sec1}.

\subsection{A determinantal formula}\label{adf}

\ 

We begin by assuming $M$ is in standard position, given an orientation
agreeing with the orientation of its complex tangent space at the
origin.  $M$ can be described by a complex implicit equation
(\ref{eq0}) in a neighborhood $\Delta$ of $\vec0$:
\begin{eqnarray}
  0&=&z_n-h(z,\z)=z_n-\left(z^TQz+\z^TRz+\z^TS\z+e(z,\z)\right).\label{eq21}
\end{eqnarray}
If we now consider two real functions,
$f^1(x_1,y_1,\ldots,x_{n-1},y_{n-1})=\rp(h(z,\z))$ and
$f^2(x_1,\ldots,y_{n-1})=\ip(h(z,\z))$, then the real $(2n-2)$-manifold $M$
has a local parametrization $\pi$ with domain
$\ddd\subseteq\re^{2n-2}$ and embedding target $\re^{2n}$:
\begin{equation}\label{eq24}
  \pi:(x_1,\ldots,y_{n-1})^T\mapsto(x_1,\ldots,y_{n-1},f^1,f^2)^T.
\end{equation}
The differential of this map assigns to each point $z\in\ddd$ the
linear map from $\re^{2n-2}$ to $T_{\pi(z)}M\subseteq\re^{2n}$; this
linear map has matrix representation:
$$\left(\begin{array}{ccc}&&\\&{\mathbbm1}_{(2n-2)\times(2n-2)}&\\&&\\\frac{df^1}{dx_1}&\cdots&\frac{df^1}{dy_{n-1}}\\\frac{df^2}{dx_1}&\cdots&\frac{df^2}{dy_{n-1}}\end{array}\right)_{(2n)\times(2n-2)}.$$
This matrix is already in the form (\ref{eq20}), so in
the local coordinate systems $\pi$ for $M$ and $V$ for
$SG(2n-2,\re^{2n})$, the oriented gauss map has the form
$$\bg_s:(x_1,y_1,\ldots,x_{n-1},y_{n-1})^T\mapsto\left(\begin{array}{ccc}\frac{df^1}{dx_1}&\cdots&\frac{df^1}{dy_{n-1}}\\\frac{df^2}{dx_1}&\cdots&\frac{df^2}{dy_{n-1}}\end{array}\right)_{2\times(2n-2)}.$$
This map takes the CR singular point $\vec0\in M\subseteq(\re^{2n},J)$
to the complex hyperplane $v^0\in\cc^+\subseteq SG(2n-2,\re^{2n})$,
with coordinates $V=0_{2\times(2n-2)}$.  If we arrange the above two
rows into column vector format (as in the end of Subsection
\ref{lcg}), then the differential of the gauss map at the origin has
matrix representation: \small
$$\left.\left(\begin{array}{ccccc}\frac
d{dx_1}\left(\frac{df^1}{dx_1}\right)&\frac
d{dy_1}\left(\frac{df^1}{dx_1}\right)&\ldots&\frac
d{dx_{n-1}}\left(\frac{df^1}{dx_1}\right)&\frac
d{dy_{n-1}}\left(\frac{df^1}{dx_1}\right)\\\frac
d{dx_1}\left(\frac{df^1}{dy_1}\right)&\frac
d{dy_1}\left(\frac{df^1}{dy_1}\right)&\ldots&\frac
d{dx_{n-1}}\left(\frac{df^1}{dy_1}\right)&\frac
d{dy_{n-1}}\left(\frac{df^1}{dy_1}\right)\\\vdots&&&&\vdots\\\frac
d{dx_1}\left(\frac{df^1}{dx_{n-1}}\right)&\frac
d{dy_1}\left(\frac{df^1}{dx_{n-1}}\right)&\ldots&\frac
d{dx_{n-1}}\left(\frac{df^1}{dx_{n-1}}\right)&\frac
d{dy_{n-1}}\left(\frac{df^1}{dx_{n-1}}\right)\\\frac
d{dx_1}\left(\frac{df^1}{dy_{n-1}}\right)&\frac
d{dy_1}\left(\frac{df^1}{dy_{n-1}}\right)&\ldots&\frac
d{dx_{n-1}}\left(\frac{df^1}{dy_{n-1}}\right)&\frac
d{dy_{n-1}}\left(\frac{df^1}{dy_{n-1}}\right)\\\frac
d{dx_1}\left(\frac{df^2}{dx_1}\right)&\frac
d{dy_1}\left(\frac{df^2}{dx_1}\right)&\ldots&\frac
d{dx_{n-1}}\left(\frac{df^2}{dx_1}\right)&\frac
d{dy_{n-1}}\left(\frac{df^2}{dx_1}\right)\\\frac
d{dx_1}\left(\frac{df^2}{dy_1}\right)&\frac
d{dy_1}\left(\frac{df^2}{dy_1}\right)&\ldots&\frac
d{dx_{n-1}}\left(\frac{df^2}{dy_1}\right)&\frac
d{dy_{n-1}}\left(\frac{df^2}{dy_1}\right)\\\vdots&&&&\vdots\\\frac
d{dx_1}\left(\frac{df^2}{dx_{n-1}}\right)&\frac
d{dy_1}\left(\frac{df^2}{dx_{n-1}}\right)&\ldots&\frac
d{dx_{n-1}}\left(\frac{df^2}{dx_{n-1}}\right)&\frac
d{dy_{n-1}}\left(\frac{df^2}{dx_{n-1}}\right)\\\frac
d{dx_1}\left(\frac{df^2}{dy_{n-1}}\right)&\frac
d{dy_1}\left(\frac{df^2}{dy_{n-1}}\right)&\ldots&\frac
d{dx_{n-1}}\left(\frac{df^2}{dy_{n-1}}\right)&\frac
d{dy_{n-1}}\left(\frac{df^2}{dy_{n-1}}\right)\end{array}\right)\right|_{\mbox{\scriptsize{${\left(\!\!\begin{array}{c}0\\\vdots\\0\end{array}\!\!\right)}$}}}$$
\normalsize
$$=\left(\begin{array}{c}Hf^1\\Hf^2\end{array}\right)_{2(2n-2)\times(2n-2)},$$
  where $Hf^1$ and $Hf^2$ are the real $(2n-2)\times(2n-2)$ real
  Hessian matrices of second derivatives, evaluated at
  $(x_1,\ldots,y_{n-1})^T=(0,\ldots,0)^T$.  The tangent space of the image
  $\bg_s(M)$ at $\bg_s(\vec0)=v^0$ is spanned by the columns of
  this matrix, so in the coordinate chart around $v^0$, it is a
  $(2n-2)$-dimensional subspace that meets $\cc^+$ transversely if the
  columns of this matrix are independent:
\begin{equation}
  \left(\begin{array}{cc}{\mathbbm1}&Hf^1\\J&Hf^2\end{array}\right)_{2(2n-2)\times2(2n-2)}.\label{eq22}
\end{equation}
So, $M$ has a CR singularity in general position if and only if the
determinant of the above matrix is nonzero.  The intersection index
at the origin is related to the sign of the determinant: $ind=+1$ for
$\det>0$, $ind=-1$ for $\det<0$.

This product has the same determinant:
$$\left(\begin{array}{cc}{\mathbbm1}&0\\{\mathbbm1}&J\end{array}\right)\cdot\left(\begin{array}{cc}{\mathbbm1}&Hf^1\\J&Hf^2\end{array}\right)=\left(\begin{array}{cc}{\mathbbm1}&Hf^1\\0&Hf^1+J\cdot
Hf^2\end{array}\right),$$ so calculating the determinant reduces to a
smaller, $(2n-2)\times(2n-2)$ real determinant, $\det(Hf^1+J\cdot
Hf^2)$.

In the $m=4$, $n=3$ case, the $4\times4$ matrix $Hf^1+J\cdot Hf^2$ is:
\begin{equation}\label{eq23}
  \left(\begin{array}{cccc}f_{x_1x_1}^1-f_{y_1x_1}^2&f_{x_1y_1}^1-f_{y_1y_1}^2&f_{x_1x_2}^1-f_{y_1x_2}^2&f_{x_1y_2}^1-f_{y_1y_2}^2\\f_{y_1x_1}^1+f_{x_1x_1}^2&f_{y_1y_1}^1+f_{x_1y_1}^2&f_{y_1x_2}^1+f_{x_1x_2}^2&f_{y_1y_2}^1+f_{x_1y_2}^2\\f_{x_2x_1}^1-f_{y_2x_1}^2&f_{x_2y_1}^1-f_{y_2y_1}^2&f_{x_2x_2}^1-f_{y_2x_2}^2&f_{x_2y_2}^1-f_{y_2y_2}^2\\f_{y_2x_1}^1+f_{x_2x_1}^2&f_{y_2y_1}^1+f_{x_2y_1}^2&f_{y_2x_2}^1+f_{x_2x_2}^2&f_{y_2y_2}^1+f_{x_2y_2}^2\end{array}\right),
\end{equation}
evaluated at $(x_1,y_1,x_2,y_2)^T=(0,0,0,0)^T$.
These real number entries can be expressed in terms of the derivatives
at the origin of the original function $h(z,\z)$:
\footnotesize
$$2\!\left(\!\!\begin{array}{cccc}\rp(h_{z_1\z_1}\!+\!h_{\z_1\z_1})&\ip(-h_{z_1\z_1}\!+\!h_{\z_1\z_1})&\rp(h_{\z_1z_2}\!+\!h_{\z_1\z_2})&\ip(-h_{\z_1z_2}\!+\!h_{\z_1\z_2})\\\ip(h_{z_1\z_1}\!+\!h_{\z_1\z_1})&\rp(h_{z_1\z_1}-h_{\z_1\z_1})&\ip(h_{\z_1z_2}\!+\!h_{\z_1\z_2})&\rp(h_{\z_1z_2}-h_{\z_1\z_2})\\\rp(h_{z_1\z_2}\!+\!h_{\z_1\z_2})&\ip(-h_{z_1\z_2}\!+\!h_{\z_1\z_2})&\rp(h_{\z_2z_2}\!+\!h_{\z_2\z_2})&\ip(-h_{\z_2z_2}\!+\!h_{\z_2\z_2})\\\ip(h_{z_1\z_2}\!+\!h_{\z_1\z_2})&\rp(h_{z_1\z_2}-h_{\z_1\z_2})&\ip(h_{\z_2z_2}\!+\!h_{\z_2\z_2})&\rp(h_{\z_2z_2}-h_{\z_2\z_2})\end{array}\!\!\!\right)\!\!\!,$$
\normalsize evaluated at $z=(0,0)^T$.  We note that these entries do
not depend on the second $z$-derivatives $h_{z_jz_k}$, which are
determined by the coefficient matrix $Q$ in (\ref{eq21}).  This agrees
with the notion that transversality and the index should not depend on
the local holomorphic coordinate system, since it was shown in Section
\ref{sec1} how the coefficients $Q$ could be arbitrarily altered by
holomorphic coordinate changes.  However, it is not as easy to see
from the form of (\ref{eq22}) or (\ref{eq23}) that they do not depend
on the $Q$ coefficients.  We also see in the above matrix that the
entries depend only on the coefficients from $R$ and $S$ in the
expression (\ref{eq21}) for $h(z,\z)$, and not on $e(z,\z)=O(3)$.

  By Lemma \ref{lem6.1} (the proof of which is left to Subsection
  \ref{smc}), the determinant of the above matrix is equal to
  $$2^4\det\left(\begin{array}{cc}R&2S\\\overline{2S}&\overline{R}\end{array}\right).$$
  The above calculations (including the Lemma) generalize to other
  dimensions $n$, and the index formula is even simpler with the
  defining equation in the form (\ref{eq12}), with $(n-1)\times(n-1)$
  complex symmetric coefficient matrix $P=2\bar S$:

\begin{thm}\label{thm5.1}
  Given a real $(2n-2)$-submanifold $M$ in $\co^n$ with a CR singular
  point in standard position and local defining equation:
\begin{eqnarray}
  z_n&=&\z^TRz+\rp\left(z^TPz\right)+e(z,\z),\label{eq46}
\end{eqnarray}
  then $M$ is in general position if and only if the matrix
  $$\Gamma=\left(\begin{array}{cc}R&\overline{P}\\P&\overline{R}\end{array}\right)$$
  is nonsingular.  Further, if $M$ is given an orientation agreeing
  with the orientation of the complex
  $(z_1,\ldots,z_{n-1})$-hyperplane tangent to $M$ at $\vec0$, then
  the intersection index $(\pm1)$ is the sign of the determinant
  $\det(\Gamma)$.  \boxx
\end{thm}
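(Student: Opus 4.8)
The plan is to combine the determinantal reduction already carried out above with the linear-algebra identity underlying Lemma \ref{lem6.1}, extended to all $n$. First I recall why general position at $\vec0$ is governed by a single determinant: with $M$ oriented so that $T_{\vec0}M$ is a positively oriented complex hyperplane, the oriented Gauss map $\bg_s$ in the charts $\pi$ for $M$ and $V$ for $SG(2n-2,\re^{2n})$ carries $\vec0$ to $v^0\in\cc^+$ with chart value $0$, and $\cc^+$ is there the column space of $\left(\begin{smallmatrix}{\mathbbm1}\\J\end{smallmatrix}\right)$; hence $\bg_s$ meets $\cc^+$ transversely at $\bg_s(\vec0)$ exactly when the columns of $\left(\begin{smallmatrix}{\mathbbm1}&Hf^1\\J&Hf^2\end{smallmatrix}\right)$ are independent, with $ind(\vec0)$ the sign of its determinant, where $f^1=\rp(h)$, $f^2=\ip(h)$, and the Hessians are evaluated at $\vec0$. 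Left multiplication by $\left(\begin{smallmatrix}{\mathbbm1}&0\\{\mathbbm1}&J\end{smallmatrix}\right)$, whose determinant equals $\det(J)=1$, preserves the determinant and produces $\left(\begin{smallmatrix}{\mathbbm1}&Hf^1\\0&Hf^1+J\cdot Hf^2\end{smallmatrix}\right)$, so everything reduces to the real $(2n-2)\times(2n-2)$ number $\det(Hf^1+J\cdot Hf^2)$. Since $f^1+if^2=h$ and the only part of $h$ affecting the Hessian at $\vec0$ is the quadratic part $\z^TRz+\rp(z^TPz)$ in \eqref{eq46}, every entry of $Hf^1+J\cdot Hf^2$ is an explicit $\re$-linear expression in the entries of $R,\overline R,P,\overline P$; the $n=3$ instance is the displayed $4\times4$ matrix.

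The substantive step is the identity $\det(Hf^1+J\cdot Hf^2)=2^{2(n-1)}\det(\Gamma)$, which for $n=3$ is exactly Lemma \ref{lem6.1} (whose proof the paper postpones to Subsection \ref{smc}); I would prove it for general $n$ by passing to Wirtinger coordinates. Let $K$ be the invertible $\re$-linear map of $\re^{2n-2}$ sending $(x_k,y_k)$ to $(z_k,\z_k)$, block diagonal with blocks $\left(\begin{smallmatrix}1&i\\1&-i\end{smallmatrix}\right)$. Then $Hf^j=K^T(\widehat{H}f^j)K$ for the Hessian $\widehat{H}$ in the $(z,\z)$ variables, $KJK^{-1}$ is the diagonal matrix with alternating entries $i,-i$, and $KK^T$ equals twice the involution $\sigma$ that swaps each $z_k$ with $\z_k$; substituting and factoring $K$ off both sides gives $Hf^1+J\cdot Hf^2=2K^{-1}\bigl(\sigma\,\widehat{H}f^1+(KJK^{-1})\,\sigma\,\widehat{H}f^2\bigr)K$, hence $\det(Hf^1+J\cdot Hf^2)=2^{2(n-1)}\det\bigl(\sigma\,\widehat{H}f^1+(KJK^{-1})\,\sigma\,\widehat{H}f^2\bigr)$. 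Reading the Wirtinger second derivatives off \eqref{eq46} ($h_{z_jz_k}|_{\vec0}=P_{jk}$, $h_{\z_j\z_k}|_{\vec0}=\overline{P}_{jk}$, $h_{z_j\z_k}|_{\vec0}=R_{jk}$ up to a transpose, together with the conjugate contributions of $\z^TRz$ to $f^1$ and $f^2$) and reordering so the holomorphic variables precede the antiholomorphic ones shows this last matrix is $\Gamma=\left(\begin{smallmatrix}R&\overline{P}\\P&\overline{R}\end{smallmatrix}\right)$ up to a permutation of rows and columns of square determinant $1$. This sign-and-transpose bookkeeping is elementary but genuinely fiddly, and it is the main obstacle in the proof; as a check, the $n=2$ case gives $\det(Hf^1+J\cdot Hf^2)=4(|R|^2-|P|^2)=4\det\Gamma$, recovering the classical elliptic ($\det\Gamma>0$) versus hyperbolic ($\det\Gamma<0$) distinction at the Bishop value $\gamma_1=\tfrac12$.

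Finally, since $2^{2(n-1)}>0$, the identity shows that $\det(Hf^1+J\cdot Hf^2)$ vanishes precisely when $\det(\Gamma)=0$ and otherwise has the same sign. Therefore $M$ is in general position at $\vec0$ if and only if $\Gamma$ is nonsingular, and in that case $ind(\vec0)$, being the sign of $\det(Hf^1+J\cdot Hf^2)$, equals $\operatorname{sgn}\det(\Gamma)$, as claimed.
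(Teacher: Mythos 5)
Your proposal is correct and follows essentially the same route as the paper: the same Grassmannian chart computation reducing transversality and the index to the sign of $\det(Hf^1+J\cdot Hf^2)$, followed by the same change-of-basis identity converting that real determinant into a positive multiple of $\det(\Gamma)$ (this is exactly Lemma \ref{lem6.1} and its stated generalization to arbitrary $n$, which the paper proves by conjugating with the unitary matrix $K$ — your block-diagonal Wirtinger matrix composed with the de-interleaving permutation and normalized by $\tfrac{\sqrt2}{2}$). The constant $2^{2(n-1)}$ and the $n=2$ sanity check both agree with the paper's $2^4\det\bigl(\begin{smallmatrix}R&2S\\ \overline{2S}&\overline R\end{smallmatrix}\bigr)$ under the substitution $P=2\bar S$.
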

\begin{example}
  In the $n=2$ case, if $M$ is in standard position, oriented to agree
  with the orientation of the $z_1$-axis near the origin, and has
  defining equation
  $$z_2=z_1\z_1+\gamma_1 z_1^2+\bar\gamma_1\z_1^2+e(z_1,\z_1)=z_1\z_1+\rp(2\gamma_1
  z_1^2)+e(z_1,\z_1)\mbox{, $\gamma_1\in\co$},$$ then the index is the sign of
  $\det\left(\begin{array}{cc}1&2\bar\gamma_1\\2\gamma_1&1\end{array}\right)=1-4\gamma_1\bar\gamma_1$,
    which is $+1$ for $0\le|\gamma_1|<\frac12$ and $-1$ for
    $|\gamma_1|>\frac12$.  $M$ is not in general position for
    $|\gamma_1|=\frac12$ (the ``parabolic'' case).
\end{example}

For $n$ in general, the real number value of $\det(\Gamma)$ is not an
invariant under holomorphic transformations, but the sign of the
determinant is; generalizing the action of the group from (\ref{eq13})
to $n$ dimensions,
\begin{equation}\label{eq45}
  (R,P)\mapsto(c\bar A^TRA,\bar cA^TPA),
\end{equation}
for $A=A_{(n-1)\times(n-1)}$, $c=c_{n,n}$.  The determinant of
$\Gamma$ transforms as:
\begin{eqnarray}
  &&\det\left(\begin{array}{cc}c\bar A^TRA&\overline{\bar
      cA^TPA}\\\bar cA^TPA&\overline{c\bar
      A^TRA}\end{array}\right)\nonumber\\ &=&\det\left(\left(\begin{array}{cc}\bar
    A^T&0\\0&A^T\end{array}\right)\left(\begin{array}{cc}cR&c\overline{P}\\\bar
      cP&\bar
      c\overline{R}\end{array}\right)\left(\begin{array}{cc}A&0\\0&\bar
      A\end{array}\right)\right)\label{eq25}\\ &=&|c|^{2(n-1)}|\det
      A|^4\det\left(\begin{array}{cc}R&\overline{P}\\P&\overline{R}\end{array}\right).\nonumber
\end{eqnarray}
Even if the determinant is zero, (\ref{eq25}) shows that the rank of
$\Gamma$ is a biholomorphic invariant.

It also follows from the transformation formula that the vector
$$\left(|\det(R)|^2,|\det(P)|^2,\det(\Gamma)\right)\in\re^3$$ has an
invariant direction (the expression is invariant modulo positive
scalar multiplication), and ratios such as
$\displaystyle{\frac{\det(\Gamma)}{|\det(R)|^2}}$ are numerical
invariants (when well-defined).

\subsection{Invariants at flat points}\label{ifp}

\ 

In the holomorphically flat case (as in Subsections \ref{df},
\ref{hf}), where in some local coordinates $h(z,\z)$ is real valued
and so $M$ is a real $(2n-2)$-hypersurface inside
$\re^{2n-1}=\{y_n=0\}$, and $f^2=0$ in (\ref{eq24}), the
$(2n-2)\times(2n-2)$ matrix $Hf^1+J\cdot Hf^2$ (\ref{eq23}) is just
the Hessian $Hf^1$.  Since $Hf^1$ is a real symmetric matrix, it has
all real eigenvalues, and it follows from the above construction and
the Proof of Lemma \ref{lem6.1} that $\Gamma$ is Hermitian symmetric,
with real eigenvalues equal to $\frac12$ times the eigenvalues of
$Hf^1$.  When $M$ is oriented as in Theorem \ref{thm5.1}, so the
normal vector at the origin is in the positive $x_n$ direction, the
Hessian is exactly the matrix representation of the Weingarten shape
operator at the origin (\cite{thorpe}).  Its determinant is the
Gauss-Kronecker curvature (the product of the $2n-2$ real eigenvalues,
which are the principal curvatures).  So, when $M$ is a hypersurface
in $\re^{2n-1}$, in general position, and positively oriented at a CR
singular point, the index and the curvature have the same sign, which
is \cite{lai} Lemma 4.11.  If we consider holomorphic coordinate
changes that preserve the property that $M$ is contained in
$\re^{2n-1}$ (as in Subsection \ref{hf}), then the matrix $A$ in the
transformations (\ref{eq45}), (\ref{eq25}) may be arbitrary, but if
$R\ne0$, then $c$ must be real.  The $A$ part acts as a Hermitian
congruence transformation on $\Gamma$, preserving its rank
$\rho(\Gamma)$ and signature $(p,q)$, but if $c$ is negative, then the
signature is switched to $(q,p)$.  For $R=0$, the transformation
(\ref{eq25}) is a Hermitian congruence transformation of $\Gamma$ for
any complex $c\ne0$.  We can conclude that for $M\subseteq\re^{2n-1}$,
the rank $\rho(\Gamma)$ and the quantity $\sigma(\Gamma)=|p-q|$ (from
which one can recover the signature, modulo switching, of $\Gamma$)
are invariant under holomorphic transformations preserving the real
valued property of $h(z,\z)$.  Since the action of $(c,A)$ on $\Gamma$
is not affected by the higher degree terms, $\rho(\Gamma)$ and
$\sigma(\Gamma)$ are also invariants of a defining function $h(z,\z)$
in a quadratically flat normal form, under holomorphic transformations
preserving the property of being in a quadratically flat normal form.

For Hermitian $R$, if we think of the quadratic part of (\ref{eq46}),
$$\z^TRz+\rp\left(z^TPz\right),$$ as a real valued quadratic form on
$\re^{2n-2}$, then its zero locus is a real algebraic variety.  The
dimension of this variety, and whether it is reducible or irreducible,
are properties that are invariant under scalar multiplication of the
form, and also under real linear coordinate changes of the domain
$\re^{2n-2}$.  In particular, they will also be invariants under
complex linear transformations $z\mapsto Az$ of $\co^{n-1}$.  Real
valued quadratic forms on $\co^{n-1}$ are considered by \cite{cs},
where the zero set is called a ``quadratic cone'' if it is irreducible
and has dimension $2n-3$.  The action of scalar multiplication and
complex linear transformations $A$ on the set of equations of
quadratic cones in $\co^{n-1}$ is the same as the above action
(\ref{eq45}) on the set of matrix pairs $(R,P)$ appearing in a
quadratically flat normal form.

In the $n=3$ case, a list of equivalence classes of real valued
quadratic forms defining quadratic cones in $\co^2$ is given by
\cite{cs}, and each type of cone corresponds to one of the normal
forms for $2\times2$ matrix pairs $(R,P)$ computed in Subsection
\ref{qnf}.  Our list of normal forms for pairs $(R,P)$, with $R$
Hermitian, (summarized in Examples \ref{ex7.3} -- \ref{ex7.7}) is
longer since \cite{cs} excludes the reducible and lower-dimensional
varieties.

\subsection{Some matrix calculations}\label{smc}

\ 

Let
$R=\left(\begin{array}{cc}\upalpha&\upbeta\\\upgamma&\updelta\end{array}\right)$
and $S=\left(\begin{array}{cc}\rm{a}&\rm{b}\\\rm{c}&\rm{d}\end{array}\right)$ be
$2\times2$ matrices with arbitrary complex entries.  Consider the
following $4\times4$ matrices with real entries:
\begin{eqnarray*}
  R^\prime&=&\left(\begin{array}{cccc}\rp(\upalpha)&-\ip(\upalpha)&\rp(\upbeta)&-\ip(\upbeta)\\\ip(\upalpha)&\rp(\upalpha)&\ip(\upbeta)&\rp(\upbeta)\\\rp(\upgamma)&-\ip(\upgamma)&\rp(\updelta)&-\ip(\updelta)\\\ip(\upgamma)&\rp(\upgamma)&\ip(\updelta)&\rp(\updelta)\end{array}\right)\\
  S^\prime&=&\left(\begin{array}{cccc}\rp(\rm{a})&\ip(\rm{a})&\rp(\rm{b})&\ip(\rm{b})\\\ip(\rm{a})&-\rp(\rm{a})&\ip(\rm{b})&-\rp(\rm{b})\\\rp(\rm{c})&\ip(\rm{c})&\rp(\rm{d})&\ip(\rm{d})\\\ip(\rm{c})&-\rp(\rm{c})&\ip(\rm{d})&-\rp(\rm{d})\end{array}\right).
\end{eqnarray*}

\begin{lem}\label{lem6.1}
  $\det(R^\prime+S^\prime)_{4\times4}=\det\left(\begin{array}{cc}R&S\\\overline{S}&\overline{R}\end{array}\right)_{4\times4}$.
\end{lem}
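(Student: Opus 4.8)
The plan is to identify both sides of the asserted identity with the determinant of a single linear map computed in two different ways. Regard $\co^{2}$ as a real $4$-dimensional vector space with ordered real basis $u_{1}=(1,0),\,u_{2}=(i,0),\,u_{3}=(0,1),\,u_{4}=(0,i)$. A direct entrywise check shows that $R'$ is exactly the matrix in this basis of the $\re$-linear endomorphism $L_{R}\colon v\mapsto Rv$ (an entry $\zeta$ of $R$ contributes the $2\times2$ block $\left(\begin{smallmatrix}\rp\zeta&-\ip\zeta\\\ip\zeta&\rp\zeta\end{smallmatrix}\right)$, which is precisely multiplication by $\zeta$ on $\co=\re^{2}$), and that $S'$ is the matrix of the $\co$-antilinear endomorphism $L_{S}\colon v\mapsto S\bar v$ (an entry $\zeta$ of $S$ contributes $\left(\begin{smallmatrix}\rp\zeta&\ip\zeta\\\ip\zeta&-\rp\zeta\end{smallmatrix}\right)$, which is $v\mapsto\zeta\bar v$). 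Hence $R'+S'$ represents $T\colon v\mapsto Rv+S\bar v$, and $\det(R'+S')=\det_{\re}T$.

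The second step is to realize $\det_{\re}T$ as a complex $4\times4$ determinant. Let $\widetilde T$ be the $\co$-linear endomorphism of $\co^{2}\oplus\co^{2}$ with block matrix $\left(\begin{smallmatrix}R&S\\\overline S&\overline R\end{smallmatrix}\right)$, so $\widetilde T(u,w)=(Ru+Sw,\ \overline S\,u+\overline R\,w)$. Since $\overline{Ru+Sw}=\overline S\,\bar u+\overline R\,\bar w$, the map $\widetilde T$ preserves the totally real subspace $W=\{(v,\bar v):v\in\co^{2}\}$, and under the $\re$-linear isomorphism $W\to\co^{2}$, $(v,\bar v)\mapsto v$, the restriction $\widetilde T|_{W}$ is $T$. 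Moreover $W\cap iW=\{0\}$ and $\dim_{\re}W=\dim_{\re}(iW)=4$, so $\co^{2}\oplus\co^{2}=W\oplus iW$; that is, $\co^{2}\oplus\co^{2}$ is the complexification of $W$ and $\widetilde T$ is the complexification of $T$. Taking any real basis of $W$ as a complex basis of $\co^{2}\oplus\co^{2}$, the matrix of $\widetilde T$ has real entries forming the matrix of $T$, whence $\det\left(\begin{smallmatrix}R&S\\\overline S&\overline R\end{smallmatrix}\right)=\det_{\co}\widetilde T=\det_{\re}T=\det(R'+S')$, which is the Lemma.

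An equivalent and perhaps more self-contained route, closer to the computational style of the rest of the section, is to complexify $\co^{2}$ directly: $\co^{2}\otimes_{\re}\co$ splits into the $\pm i$-eigenspaces of the extended complex structure operator, spanned respectively by $u_{1}-iu_{2},\,u_{3}-iu_{4}$ and $u_{1}+iu_{2},\,u_{3}+iu_{4}$; because $L_{R}$ is $\co$-linear it preserves these two planes and acts there by $R$ and $\overline R$, while $L_{S}$ is $\co$-antilinear so it interchanges them and acts by $S$ and $\overline S$, and assembling these blocks in the ordered eigenbasis gives $\left(\begin{smallmatrix}R&S\\\overline S&\overline R\end{smallmatrix}\right)$, a change of basis away from $R'+S'$. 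Either way the only genuine computation is the real/imaginary-part bookkeeping of the first step; the sole subtlety is to track the conjugation built into $S'$ carefully enough that the off-diagonal blocks emerge as $S$ and $\overline S$ (rather than $S^{T}$ or $\bar S^{T}$). There is no real obstacle: the statement is just that $\left(\begin{smallmatrix}R&S\\\overline S&\overline R\end{smallmatrix}\right)$ is the complex ``doubling'' of the $\re$-linear map $v\mapsto Rv+S\bar v$.
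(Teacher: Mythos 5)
Your proof is correct, and your second route is essentially the paper's argument in conceptual form: the paper verifies the identity by conjugating $R'+S'$ by an explicit unitary $K$ with $\det(K)=1$, and that $K$ is precisely the change of basis to the ordered $\pm i$-eigenbasis $u_1-iu_2,\,u_3-iu_4,\,u_1+iu_2,\,u_3+iu_4$ of the complexified structure operator that you describe. Your first route (identifying both sides as the determinant of the single map $v\mapsto Rv+S\bar v$, once over $\re$ and once via its complexification preserving the totally real subspace $W$) is a clean basis-free repackaging of the same idea, so no genuinely different machinery is involved.
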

\begin{proof}
Let
$$K=\frac{\sqrt{2}}2\left(\begin{array}{cccc}1&i&0&0\\0&0&1&i\\1&-i&0&0\\0&0&1&-i\end{array}\right).$$
  Then $K$ is a unitary matrix with $\det(K)=1$.  A calculation shows
  that:
  \begin{eqnarray*}
    KR^\prime\bar K^T&=&\left(\begin{array}{cc}R&0\\0&\overline{R}\end{array}\right),\\
    KS^\prime\bar K^T&=&\left(\begin{array}{cc}0&S\\\overline{S}&0\end{array}\right).
  \end{eqnarray*}
Since $\det(R^\prime+S^\prime)=\det(K(R^\prime+S^\prime)\bar K^T)$,
the claimed formula follows.
\end{proof}
The Lemma generalizes to complex $\mathsf{n}\times \mathsf{n}$
matrices and the corresponding $2\mathsf{n}\times2\mathsf{n}$ matrices
following the same pattern.  In the $\mathsf{n}=1$ case, the analogous
matrix $K$ is
$K=\frac{\sqrt{2}}2\left(\begin{array}{cc}1&i\\1&-i\end{array}\right)$,
  for $\mathsf{n}=3$,
$$K=\frac{\sqrt{2}}2\left(\begin{array}{cccccc}1&i&0&0&0&0\\0&0&1&i&0&0\\0&0&0&0&1&i\\1&-i&0&0&0&0\\0&0&1&-i&0&0\\0&0&0&0&1&-i\end{array}\right),$$
  etc.  Even without the result of the Lemma, it is easy to see that
  elementary identities imply
  $\left(\begin{array}{cc}R&S\\\overline{S}&\overline{R}\end{array}\right)_{2\mathsf{n}\times2\mathsf{n}}$
  has a real determinant.  In the application of the Lemma in
  Subsection \ref{adf}, the $S$ block is assumed to be symmetric ---
  however, the Lemma does not need that assumption.

\section{Complexification}\label{complexification}

We return to the description of $M\subseteq\re^{2n}=\co^{n}$ as the
image of a real analytic parametric map, with domain
$\ddd\subset\re^{2n-2}$ and target $\co^n$.  Re-writing Equation
(\ref{eq24}) in complex form gives:
$$z\mapsto(z,h(z,\z))=(z_1,\ldots,z_{n-1},h(z_1,\z_1,\ldots,z_{n-1},\z_{n-1})).$$
The following complex analytic map, with domain
$\ddd_{\cl}\subseteq\co^{2n-2}$ (as in (\ref{eq5})) and target $\co^n$, is
a complexification of the above parametric map:
$$(z_1,\ldots,z_{n-1},w_1,\ldots,w_{n-1})\mapsto(z_1,\ldots,z_{n-1},h(z_1,w_1,\ldots,z_{n-1},w_{n-1})).$$
The coordinates $w=(w_1, \ldots, w_{n-1})$ are new complex variables;
the new map restricted to $w=\z$ (by substitution in the series
expansion of $h(z,\z)$, as in (\ref{eq-2})) is exactly the original map.  A
geometric interpretation of such a complexification construction (as
the composite of a holomorphic embedding $\ddd_{\cl}\to\co^{2n}$ and a
linear projection $\co^{2n}\to\co^n$) is given in \cite{mem} \S 4; the
$n=2$ case is used in \cite{mw}.  The complex map is singular at the
origin (its complex Jacobian drops rank there since $h(z,w)$ has no linear
terms).

The origin-preserving local biholomorphic transformations of $\co^n$,
$\vec\tz=C\vec z+\vec p(\vec z)$, as in (\ref{eq-1}), act on the
function $h(z,\z)$; this induces an action on the complex map
$(z,w)\mapsto(z,h(z,w))$.  This group of transformations is a subgroup
of a larger transformation group, which acts on the set of (germs at
the origin of) maps $\co^{2n-2}\to\co^n$, by composition with
origin-preserving biholomorphic transformations of both the domain and
the target.  The algebraic interpretation of the complexification
construction is that the larger group allows the transformation of the
$z$ and $w$ (formerly $\z$) variables independently.  Any invariants
under the larger group action will also be invariants under the action
of the subgroup.

In the case $n=2$, the map $(z_1,w_1)\mapsto(z_1,h(z_1,w_1))$ can be
put into one of three normal forms under the larger group:
$(z_1,w_1^2+O(3))$, $(z_1,z_1w_1+O(3))$, $(z_1,O(3))$.  As described
in \cite{mem}, the first two cases correspond, respectively, to
Whitney's fold and cusp singularities of maps $\co^2\to\co^2$.  In the
fold case, a point near the origin in the target $\co^2$ has two
inverse image points.

In the case $n=3$, the map $\ddd_{\cl}\to\co^3$:
$$(z_1,z_2,w_1,w_2)\mapsto(z_1,z_2,h(z_1,w_1,z_2,w_2))$$ can be
transformed to $(\tz_1,\tz_2,\tilde Q(\tz,\tw)+O(3))$, where $\tilde
Q(\tz,\tw)$ is the quadratic part, falling into one of the following
six normal forms (the calculation is omitted):
$$\tw_1^2+\tw_2^2, \ \ \tz_2\tw_2+\tw_1^2, \ \ \tw_1^2,
\ \ \tz_1\tw_1+\tz_2\tw_2, \ \ \tz_1\tw_1, \ \ 0.$$ With $h(z,\z)$ of the
form (\ref{eq0}), the rank of the coefficient matrix $S$ is an
invariant of the complexification under the large group, and so is the
rank of $(R|S)_{2\times4}$.  These two numbers uniquely determine the
equivalence class of the quadratic part under the larger group; the
rank of $R$ is not an invariant.  The first of the above six cases is
the generic one, where $\rho(S)=2$ and the inverse image of a point
near the origin in the target $\co^3$ is a complex analytic curve in
$\ddd_{\cl}\subseteq\co^4$.

\section{Various global examples}\label{vge}

Here we collect some examples of compact real $4$-manifolds embedded
in complex $3$-manifolds.

\begin{example}\label{ex6.1}
  The real $4$-sphere has a real algebraic embedding in a real
  hyperplane $\re^5\subseteq\co^3$, which is (globally)
  holomorphically flat.  For positive coefficients $\mathsf{d}_1$, $\mathsf{d}_2$,
  $\mathsf{d}_3$, $\mathsf{d}_4$, $\mathsf{d}_5$, the implicit equation
  $$\mathsf{d}_1x_1^2+\mathsf{d}_2y_1^2+\mathsf{d}_3x_2^2+\mathsf{d}_4y_2^2+\mathsf{d}_5x_3^2=1$$
  defines an ellipsoidal hypersurface in the real hyperplane $y_3=0$.
  There are exactly two CR singularities, where the tangent space is
  parallel to the $(z_1,z_2)$-subspace.  The two points are
  holomorphically equivalent to each other, and the two local real
  defining equations can be put into a complex normal form
  (\ref{eq12}) with $N={\mathbbm1}$, and $P$ real diagonal.  The
  Hessian at each point is definite, with the entries of $P$ in the
  interval $[0,1)$, depending on $\mathsf{d}_1,\ldots,\mathsf{d}_5$.
    Putting an orientation on $S^4$ induces opposite orientations at
    the two points, so $I_+=I_-=1$, consistent with the characteristic
    class formulas from Subsection \ref{ccf}: for any immersion of
    $S^4$ in general position in $\co^3$, $I_++I_-=\upchi(S^4)=2$, and
    $I_+-I_-=-p_1S^4=0$.
\end{example}

\begin{example}\label{ex6.2b}
  Every compact, oriented, three-dimensional, smooth manifold $M^3$ admits a
  smooth immersion in $\re^4$, $\tau_1:M^3\to\re^4$ (\cite{hirsch},
  \cite{jl}).  Not every such $3$-manifold $M^3$ admits an embedding,
  but the manifolds $S^3$, $S^2\times S^1$, and $S^1\times S^1\times
  S^1$ all can be embedded as hypersurfaces of revolution in $\re^4$.
  There is an immersion (but not an embedding) of $\re P^3$ in $\re^4$
  (\cite{hirsch}, \cite{milnor}).  Also consider any oriented
  immersion of the circle, $\tau_2:S^1\to\re^2$.  For any (almost)
  complex structures on $\re^4$ and $\re^2$, $\tau_1$ is a CR regular
  immersion, $\tau_2$ is a totally real immersion, and the product
  $\tau_1\times\tau_2:M^3\times S^1\to\re^4\times\re^2$ is an oriented,
  CR regular immersion (with respect to the product complex
  structure).  The index sums $I_+=I_-=0$ are consistent with the
  topological formulas from Subsection \ref{ccf}, since $\upchi(M^3\times
  S^1)$ and $p_1(M^3\times S^1)$ are both zero.
\end{example}

\begin{example}\label{ex6.3}
  The $4$-manifold $S^2\times S^2$ does not admit any CR regular
  immersion in $\co^3$; an immersion in general position will have
  $I_++I_-=\upchi(S^2\times S^2)=4$ and $I_+-I_-=-p_1(S^2\times S^2)=0$,
  so $I_+=I_-=2$.  We consider a real algebraic embedding in
  $\re^3\times\re^3$, given by a product of ellipsoids:
  \begin{eqnarray}
    \mathsf{a}x_1^2+\mathsf{b}y_1^2+\mathsf{c}x_3^2&=&1,\label{eq40}\\
    \mathsf{d}x_2^2+\mathsf{e}y_2^2+\mathsf{f}y_3^2&=&1,\nonumber
  \end{eqnarray}
  with positive coefficients $\mathsf{a},\ldots,\mathsf{f}$.  There are exactly four CR
  singularities, at the points with $z_1=z_2=0$.  Solving the real
  defining equations for $x_3$ and $y_3$, setting $z_3=x_3+iy_3$, and
  translating the CR singularities into standard position, the local
  equation at each point is real analytic:
  \begin{eqnarray*}
    x_3+iy_3&=&\frac{\eta_1}{\sqrt{\mathsf{c}}}\left((-1)+\sqrt{1-\mathsf{a}x_1^2-\mathsf{b}y_1^2}\right)\\
            &&\ +i\frac{\eta_2}{\sqrt{\mathsf{f}}}\left((-1)+\sqrt{1-\mathsf{d}x_2^2-\mathsf{e}y_2^2}\right),\\
    z_3&=&\frac{\eta_1}{2\sqrt{\mathsf{c}}}\left(\mathsf{a}\cdot\left(\frac{z_1+\z_1}{2}\right)^2+\mathsf{b}\cdot\left(\frac{z_1-\z_1}{2i}\right)^2\right)\\
    &&\ +i\frac{\eta_2}{2\sqrt{\mathsf{f}}}\left(\mathsf{d}\cdot\left(\frac{z_2+\z_2}{2}\right)^2+\mathsf{e}\cdot\left(\frac{z_2-\z_2}{2i}\right)^2\right)+O(4),\\
  \end{eqnarray*}
  where the four CR singular points correspond to the four sign
  choices $\eta_1=\pm1$, $\eta_2=\pm1$.  After a transformation
  (\ref{eq4}), the equation can be written in the form (\ref{eq12}),
  with diagonal coefficient matrices $(R,P)$:
\begin{eqnarray*}
  z_3&=&(\z_1,\z_2)\left(\begin{array}{cc}\eta_1\frac{\mathsf{a}+\mathsf{b}}{4\sqrt{\mathsf{c}}}&0\\0&i\eta_2\frac{\mathsf{d}+\mathsf{e}}{4\sqrt{\mathsf{f}}}\end{array}\right)\left(\begin{array}{c}z_1\\z_2\end{array}\right)\\
  &&+\rp\left((z_1,z_2)\left(\begin{array}{cc}\eta_1\frac{\mathsf{a}-\mathsf{b}}{4\sqrt{\mathsf{c}}}&0\\0&\eta_2i\frac{\mathsf{d}-\mathsf{e}}{4\sqrt{\mathsf{f}}}\end{array}\right)\left(\begin{array}{c}z_1\\z_2\end{array}\right)\right)+O(4).
\end{eqnarray*}
  The normal form for the coefficient matrix pair, from Case (1a) of
  Subsection \ref{qnf}, has the same form for all four points: for
  $\eta_1=\eta_2$,
  $$(R,P)\sim\left(\left(\begin{array}{cc}1&0\\0&i\end{array}\right),\left(\begin{array}{cc}\frac{|\mathsf{a}-\mathsf{b}|}{\mathsf{a}+\mathsf{b}}&0\\0&\frac{|\mathsf{d}-\mathsf{e}|}{\mathsf{d}+\mathsf{e}}\end{array}\right)\right),$$
  and for $\eta_1=-\eta_2$,
  $$(R,P)\sim\left(\left(\begin{array}{cc}1&0\\0&i\end{array}\right),\left(\begin{array}{cc}\frac{|\mathsf{d}-\mathsf{e}|}{\mathsf{d}+\mathsf{e}}&0\\0&\frac{|\mathsf{a}-\mathsf{b}|}{\mathsf{a}+\mathsf{b}}\end{array}\right)\right),$$
  not depending on $\mathsf{c}$, $\mathsf{f}$.  The non-negative diagonal entries are
  local biholomorphic invariants.  For any values of $\mathsf{a}$, $\mathsf{b}$, $\mathsf{d}$,
  $\mathsf{e}$, $\eta_1$, $\eta_2$, the matrix pair satisfies
  $\det(\Gamma)>0$, so the index is $+1$ for each CR singular point;
  there are two complex points and two anticomplex points.

  We also note that this particular embedding of $S^2\times S^2$ is
  contained in the (Levi nondegenerate) real hypersurface
  $$\mathsf{a}x_1^2+\mathsf{b}y_1^2+\mathsf{c}x_3^2+\mathsf{d}x_2^2+\mathsf{e}y_2^2+\mathsf{f}y_3^2=2,$$ an ellipsoid in
  $\co^3$.  So, the $4$-manifold $S^2\times S^2$ admits some
  topological embedding as a hypersurface in $\re^5$, but the
  extrinsic geometry of this product embedding (\ref{eq40}) is that
  globally, it is not contained in a Levi flat hypersurface, and
  locally, it is not quadratically flat at its CR singular points.
\end{example}

\begin{example}\label{ex6.4}
  Consider $M=\co P^2$ with homogeneous coordinates $[z_0:z_1:z_2]$
  and $\aaa=\co P^3$ with coordinates $[Z_0:Z_1:Z_2:Z_3]$.  For each
  $t\in\re$, let 
  \begin{eqnarray}
    \iota_t:\co P^2&\to&\co P^3:\nonumber\\
    \mbox{$[z_0:z_1:z_2]$}&\mapsto&[z_0\cdot \pp:z_1\cdot \pp:z_2\cdot
    \pp:t\cdot \qq],\label{eq39}
  \end{eqnarray}
  where $\pp$ is the polynomial expression
  $$\pp=\pp(z_0,z_1,z_2)=6z_0\z_0+z_1\z_1+6z_2\z_2$$ and $\qq$ is the polynomial expression
  $$\qq=\qq(z_0,z_1,z_2)=2z_0^2\z_1+2z_0z_1\z_2-z_0z_2\z_1+2z_1z_2\z_0.$$
  Note that for any $[z_0:z_1:z_2]$, $\pp\ne0$, and the first three
  components in the RHS of (\ref{eq39}) have no common zeros.  The
  formula (\ref{eq39}) also has a homogeneity property:
  $$\iota_t([\lambda\cdot z_0:\lambda\cdot z_1:\lambda\cdot
    z_2])=[\lambda^2\bar\lambda\cdot z_0\cdot
    \pp:\lambda^2\bar\lambda\cdot z_1\cdot \pp:\lambda^2\bar\lambda\cdot
    z_2\cdot \pp:\lambda^2\bar\lambda\cdot t\cdot \qq],$$ so $\iota_t$ is
  well-defined.  Observe that for $t=0$,
  $$\iota_0([z_0:z_1:z_2])=[z_0\cdot \pp:z_1\cdot \pp:z_2\cdot \pp:0\cdot
    \qq]=[z_0:z_1:z_2:0]$$ is exactly the embedding $\mathbf c$ from
    Example \ref{ex3.7}.  To show that for each $t$, $\iota_t$ is a
    real analytic embedding, and that the family $\iota_t$ is real
    analytic in $t$ (so that this construction is an isotopy as in
    Example \ref{ex3.7}, and an ``unfolding'' as in \cite{mem}), we
    view $\iota_t$ in local affine coordinate charts.

   The restriction of $\iota_t$ to the $\{z_0=1\}$ neighborhood has
   image contained in the $\{Z_0\ne0\}$ neighborhood of the target
   $\co P^3$, and is given by the formula:
   \begin{eqnarray}
    \mbox{$[1:z_1:z_2]$}&\mapsto&\left[1:z_1:z_2:\frac{t\cdot
        \qq(1,z_1,z_2)}{\pp(1,z_1,z_2)}\right],\nonumber\\ (z_1,z_2)&\mapsto&\left(z_1,z_2,\frac{t\cdot(2\z_1+2z_1\z_2-z_2\z_1+2z_1z_2)}{6+z_1\z_1+6z_2\z_2}\right).\label{eq42}
   \end{eqnarray}
    This is a graph over the $(z_1,z_2)$-hyperplane of a rational (in
    $z$, $\z$) function $F_t(z_1,\z_1,z_2,\z_2)=t\cdot \qq/\pp$ with a
    non-vanishing denominator, so this restriction of $\iota_t$ is a
    real analytic embedding.  The dependence on $t$ is real analytic,
    where $\iota_0$ is just the graph of the constant function $0$.
    For each $t\ne0$, the graph is a smooth real submanifold $M_t$ in
    $\co^3$, which is not a complex submanifold but which inherits its
    orientation from the $(z_1,z_2)$-hyperplane.  Its CR singularities
    can be located by solving the system of two complex equations
   \begin{equation}\label{eq41}
    \frac d{d\z_1}F_t=0,\ \frac d{d\z_2}F_t=0.
   \end{equation}
    The solution set does not depend on $t$ (for $t\ne0$), and it is
    easy to check that each of the following five points
    $(\zeta_1,\zeta_2)$ in the domain is a zero of the
    $\z$-derivatives:
    $$\{(0,2),\ (\sqrt{3},1),\ (-\sqrt{3},1),\ (3i,-1),\ (-3i,-1)\}.$$
    The points
    $(\zeta_1,\zeta_2,F_t(\zeta_1,\bar\zeta_1,\zeta_2,\bar\zeta_2))$
    are the CR singularities of $M_t$, each with a positively oriented
    complex tangent space.  Given $F_t$, calculations with the
    assistance of \cite{maple} found the above five points by solving
    four real equations in four real unknowns, and (omitting the
    details) verified that these five points are the only solutions of
    (\ref{eq41}) in this neighborhood.

  The challenge in constructing this example by making a good choice
  for the above $\pp$ and $\qq$ is to find coefficients which are simple
  and sparse enough so that solving the system is a tractable
  computation with a numerically exact solution set, but not so simple
  that $M_t$ is not in general position and has a CR singularity with
  a degenerate normal form ($ind\ne\pm1$).

    To calculate the index of the CR singular point at
    $(\zeta_1,\zeta_2,F_t)$, we will use Theorem \ref{thm5.1} and find
    the sign of $\det(\Gamma)$ --- this is where we need the exact
    coordinates of the CR singularities.  By the form of (\ref{eq42}),
    it will be enough to check the $t=1$ case, since for $t\ne0$,
    $M_t$ is related to $M_1$ by an invertible complex linear
    transformation of $\co^3$.

    Corresponding to the solution $(0,2)$, translating the CR singular
    point to the origin in $\co^3$ gives a multivariable Taylor
    expansion:
    $$F_1(z_1,\z_1,z_2+2,\overline{z_2+2})=\frac4{15}z_1-\frac1{25}z_1z_2-\frac1{25}z_1\z_2-\frac1{30}z_2\z_1+O(3).$$
    The linear term can be eliminated by a complex linear
    transformation $\tz_3=z_3+c_{31}z_1$, which does not change the
    quadratic or higher degree terms and brings $M_1$ into standard
    position (\ref{eq0}).  The $(R,S)$ coefficient matrix pair
    satisfies $S=0_{2\times2}$ and $\det(R)\ne0$, so
    $\Gamma=\left(\begin{array}{cc}R&0\\0&\bar R\end{array}\right)$
      and $\det(\Gamma)=|\det(R)|^2>0$.  This CR singularity has index
      $+1$.

    Similarly, corresponding to the solution $(\sqrt{3},1)$,
    translating to the origin gives the series expansion:
    \begin{eqnarray*}
      &&F_1(z_1+\sqrt{3},\overline{z_1+\sqrt{3}},z_2+1,\overline{z_2+1})-\frac{\sqrt{3}}{3}\\ &=&\frac15z_1-\frac{\sqrt{3}}{15}z_2-\frac{\sqrt{3}}{75}z_1^2+\frac1{15}z_1z_2+\frac{2\sqrt{3}}{75}z_2^2\\ &&-\frac{8\sqrt{3}}{225}z_1\z_1+\frac4{75}z_1\z_2-\frac4{75}z_2\z_1-\frac{8\sqrt{3}}{75}z_2\z_2+O(3).
    \end{eqnarray*}
    Again in this case, the holomorphic terms are irrelevant, $S=0$,
    and $\det(R)\ne0$, so the CR singularity has index $+1$.  Each of
    the remaining three points, by a similar (but omitted)
    calculation, also has index $+1$, so $M_1$ is in general position.

   It remains to check the points ``at infinity,'' where $z_0=0$ and
   $\iota_t$ restricts to a holomorphic linear embedding
   $$[0:z_1:z_2]\mapsto[0:z_1\cdot \pp:z_2\cdot
     \pp:t\cdot0]=[0:z_1:z_2:0].$$ This restriction is one-to-one and
   misses the image of $F_t$ in the $\{Z_0=1\}$ affine neighborhood,
   which shows that $\iota_t$ is one-to-one for each $t$.

   To look for more CR singularities on the line at infinity, we
   consider the restriction of $\iota_t$ to another affine coordinate
   chart.  The restriction of $\iota_t$ to the $\{z_1=1\}$
   neighborhood has image contained in the $\{Z_1\ne0\}$ neighborhood
   of the target $\co P^3$, and is given by the formula:
   \begin{eqnarray}
    \mbox{$[z_0:1:z_2]$}&\mapsto&\left[z_0:1:z_2:\frac{t\cdot
        \qq(z_0,1,z_2)}{\pp(z_0,1,z_2)}\right],\nonumber\\ (z_0,z_2)&\mapsto&\left(z_0,z_2,\frac{t\cdot(2z_0^2+2z_0\z_2-z_0z_2+2z_2\z_0)}{6z_0\z_0+1+6z_2\z_2}\right).\label{eq43}
   \end{eqnarray}
    This is another graph of a rational function
    $G_t(z_0,\z_0,z_2,\z_2)=t\cdot \qq/\pp$ with a non-vanishing
    denominator, which is real analytic in $z$, $\z$ and $t$.  Since
    we have already found all the CR singularities of the form
    $\iota_t([1:z_1:z_2])$, we can simplify the computational problem
    of finding new CR singularities in the graph of $G_t$ by adding
    the equation $z_0=0$ to get this system:
   \begin{equation}\label{eq44}
    \frac d{d\z_0}G_t=0,\ \frac d{d\z_2}G_t=0,\ z_0=0.
   \end{equation}
    The solution set does not depend on $t$ (for $t\ne0$), and a
    calculation with \cite{maple} shows that $(\zeta_0,\zeta_2)=(0,0)$
    is the only solution of (\ref{eq44}).  The graph of $G_t$ is
    already in standard position; at $t=1$, the quadratic part of the
    defining function is the numerator from (\ref{eq43}):
    $$G_1(z_0,\z_0,z_2,\z_2)=2z_0^2+2z_0\z_2-z_0z_2+2z_2\z_0+O(3).$$
    This is a CR singularity with index $+1$; the graph is in general
    position.

    There is one last point to check, $\iota_t([0:0:1])=[0:0:1:0]$.
    The restriction of $\iota_t$ to the $\{z_2=1\}$ neighborhood is
    given by the formula:
   \begin{eqnarray}
    \mbox{$[z_0:z_1:1]$}&\mapsto&\left[z_0:z_1:1:\frac{t\cdot
        \qq(z_0,z_1,1)}{\pp(z_0,z_1,1)}\right],\nonumber\\ (z_0,z_1)&\mapsto&\left(z_0,z_1,\frac{t\cdot(2z_0^2\z_1+2z_0z_1-z_0\z_1+2z_1\z_0)}{6z_0\z_0+z_1\z_1+6}\right).\nonumber
   \end{eqnarray}
   Since this image is also a real analytic graph, $\iota_t$ is a
   (global) real analytic embedding depending real analytically on
   $t$.  The origin in this neighborhood is another CR singularity of
   the image; the graph is already in standard position and, for
   $t\ne0$, in general position, with a CR singular point of index
   $+1$.

   The conclusion is that for $t\ne0$, $\iota_t:\co P^2\to\co P^3$ has
   exactly seven CR singular points, at the image
   of $$\{[1:0:2],\ [1:\pm\sqrt{3}:1],\ [1:\pm3i:-1],\ [0:1:0],\ [0:0:1]\}.$$
   Every image point is in $N_2^+$ (positively oriented tangent
   space), with index $+1$, consistent with the characteristic class
   calculations of Example \ref{ex3.7}.
\end{example}

\section{Summary}\label{sum}

\begin{thm}\label{thm4.1}
  Given a real analytic 4-dimensional submanifold $M$ in $\co^3$, for
  any CR singular point there is a local holomorphic coordinate
  neighborhood so that the CR singular point is at the origin, the
  tangent space is the $(z_1,z_2)$-hyperplane, and the local defining
  equation for $M$ is given by
\begin{eqnarray}
  z_3&=&h(z,\z)\nonumber\\ &=&(\z_1,\z_2)N\left(\begin{array}{c}z_1\\z_2\end{array}\right)+\rp\left((z_1,z_2)P\left(\begin{array}{c}z_1\\z_2\end{array}\right)\right)+e(z_1,\z_1,z_2,\z_2),\label{eq38}
\end{eqnarray}
  where $e(z,\z)=O(3)$ is real analytic.  The coefficient matrices $N$, $P$
  fall into one of the cases from Table {\mbox{\rm $1$}}, and exactly
  one (modulo equivalences as indicated).  \boxx
\end{thm}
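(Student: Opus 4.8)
The plan is to assemble the theorem from the reductions already carried out in Section~\ref{sec1}; since the statement is essentially a summary, the ``proof'' is an organized bookkeeping of those steps together with a verification of uniqueness. First I would bring an arbitrary CR singular point of $M$ into standard position~(\ref{eq0}): a translation moves the point to $\vec0$, and a complex linear change of coordinates carries $T_{\vec0}M$ onto the $(z_1,z_2)$-hyperplane, so that the defining equation becomes $z_3=z^TQz+\z^TRz+\z^TS\z+e(z,\z)$ with $Q$, $S$ complex symmetric and $e(z,\z)=O(3)$. The quadratic coordinate change~(\ref{eq4}) with $Q^\prime=\bar S-Q$ then replaces the first quadratic term, yielding the shape~(\ref{eq6}); since this, and all subsequent changes, are of the form~(\ref{eq-1}), they preserve standard position and keep $e(z,\z)=O(3)$. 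Setting $P=2\bar S$ and applying Proposition~\ref{thm4.3} (Theorem~4.3 of~\cite{ac}) to the action $R\mapsto c\bar A^TRA$ puts $R$ into exactly one of the five normal forms $N$, and the equation takes the form~(\ref{eq38}).

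For the coefficient matrix $P$, I would invoke the case-by-case normalization of Subsection~\ref{qnf}: one restricts to the subgroup of pairs $(c,A)$ that stabilize the chosen $N$ (the equation $N=c\bar A^TNA$), which acts on $P$ by~(\ref{eq13}). For $N$ in Case~(1) this subgroup is computed explicitly and the analysis splits according to $\theta\in(0,\pi)$ (Case~1a), $\theta=0$ (Case~1b, where the residual action is unitary congruence of the complex symmetric matrix $P$ and the normal form is produced by Takagi's theorem), and $\theta=\pi$ (Case~1c, where one passes to the auxiliary Hermitian matrix $B=\bar P^TNP$, reducing to simultaneous Hermitian congruence of the pair $(N,B)$, solved via~\cite{hj1}, \cite{hs}, \cite{lr}, and then split further into subcases 1ci--1ciii according to the intermediate normal form of $(N,B)$). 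For $N$ in Cases~(2a), (2b), (3), (4) the stabilizer is an explicit triangular-type group and the reductions of $P$ are direct; for $N=0$ (Case~5) the only surviving invariant is $\operatorname{rank}(P)$. Running through all branches and recording their outputs is exactly the content of Table~1, and each branch is seen to terminate in one of its rows.

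It remains to justify the ``exactly one'' clause, modulo the residual equivalences noted in the table (such as the sign changes on off-diagonal or diagonal entries coming from leftover real scalings). The five choices of $N$ are separated by the discrete invariants already identified --- the pair $(\operatorname{rank}R,\operatorname{rank}S)$, the rank of $(R|S)$, and, via Theorem~\ref{thm5.1}, the rank of $\Gamma$ --- together with the continuous invariants $\theta$ and $\tau$ of Proposition~\ref{thm4.3}; so distinct $N$'s cannot be confused, and one checks that the $\theta$, $\tau$ ranges do not overlap. Within a fixed $N$, the remaining parameters of $P$ are shown to be genuine invariants by the transformation law $P\mapsto\bar cA^TPA$ restricted to the stabilizer of $N$: for example, in Case~(1b) they are the Takagi singular values, invariant under unitary congruence up to reordering. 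The delicate point, which I expect to be the main obstacle to a clean write-up, is Case~(1c): several of its normal forms look different but are in fact equivalent (the family with $0<b<a$ collapses to a scalar multiple of $\mathbbm1$, and the $0\le a<b$ family to a multiple of $N^\prime$), while the genuinely inequivalent ones --- notably the rank-one diagonal form with $0=a<d$ versus the rank-one non-diagonalizable form with $0<a=b$ --- are distinguished only by the finer invariant of whether the pencil $N-\lambda B$, equivalently the pair $(N,B)$, is simultaneously diagonalizable. Tracking that invariant carefully is what makes Case~(1c) the heart of the uniqueness verification; the other cases are routine.
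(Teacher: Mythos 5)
Your proposal is correct and follows essentially the same route as the paper, which presents Theorem \ref{thm4.1} as a summary of the reductions already carried out in Subsections \ref{ldet} and \ref{qnf}: standard position, elimination of $Q$ via $Q'=\bar S-Q$, normalization of $R$ by Proposition \ref{thm4.3}, and then the case-by-case normalization of $P$ under the stabilizer of each $N$, with uniqueness coming from the exactness clause of Proposition \ref{thm4.3} together with the invariants tracked in each branch (including the simultaneous-diagonalizability distinction in Case (1c) that you rightly flag as the delicate point). The only cosmetic remark is that separating the five types of $N$ needs no appeal to the rank invariants or to $\Gamma$, since Proposition \ref{thm4.3} already asserts that the normal form $N$ is unique.
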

  In the following table, the third column is the number of real
  moduli for each case.  The last column indicates the sign of
  $\det(\Gamma)$ that can occur for various values of the entries of
  $N$ and $P$: positive, negative, or zero ($+,-,0$).

\pagebreak

\begin{center}
  Table 1
  \begin{tabular}{|c|l|} \hline 
    $N$&\ \ \ \ \ \ $P$\\ \hline
    $\begin{array}{c}\left(\begin{array}{cc}1&0\\0&e^{i\theta}\end{array}\right)\\0<\theta<\pi\end{array}$&$\begin{array}{c|c|lr}\left(\begin{array}{cc}a&b\\b&d\end{array}\right)&5&a>0,d>0,b\sim-b\in\co&+-0\\\left(\begin{array}{cc}0&b\\b&d\end{array}\right)&3&b\ge0,d\ge0&+-0\\\left(\begin{array}{cc}a&b\\b&0\end{array}\right)&3&a>0,b\ge0&\ \ \ \ \ \ \ \ \ +-0\end{array}$\\
    \hline
    $\left(\begin{array}{cc}1&0\\0&1\end{array}\right)$&$\begin{array}{c|c|lr}\left(\begin{array}{cc}a&0\\0&d\end{array}\right)&2&0\le
    a\le d&\ \ \ \ \ \ \ \ \ \ \ \ \ \ \ \ \ \ \ \ \ \ \ \ \ \ \ \ +-0\end{array}$\\ \hline
    $\left(\begin{array}{cc}1&0\\0&-1\end{array}\right)$&$\begin{array}{c|c|lr}\left(\begin{array}{cc}a&0\\0&d\end{array}\right)&2&0\le
    a\le
    d&+-0\\\left(\begin{array}{cc}0&b\\b&0\end{array}\right)&1&b>0&\ \ \ \ \ \ \ \ \ \ \ \ \ \ \ \ \ \ \ \ \ \ \ \ \ \ \ \ \ \ \ \  \ + \\\left(\begin{array}{cc}1&1\\1&1\end{array}\right)&0&&\ \ \ \ \ \ \ \ \ \ \ \ \ \ \ \ \ \ \ \ \ \ \ \ \ \ \ \ \ \ + \end{array}$\\
    \hline
    $\left(\begin{array}{cc}0&1\\1&0\end{array}\right)$&$\begin{array}{c|c|lr}\left(\begin{array}{cc}0&b\\b&1\end{array}\right)&1&b>0&+ \ 0\\\left(\begin{array}{cc}1&0\\0&d\end{array}\right)&2&\ip(d)>0&\ \ \ \ \ \ \ \ \ \ \ \ \ \ \ \ \ \ \ \ \ \ \ \ \ \ \ \ \ \ \ \ \  +\end{array}$\\
    \hline
    $\begin{array}{c}\left(\begin{array}{cc}0&1\\\tau&0\end{array}\right)\\0<\tau<1\end{array}$&$\begin{array}{c|c|lr}\left(\begin{array}{cc}a&b\\b&d\end{array}\right)&5&b>0,|a|=1,(a,d)\sim(-a,-d)\!\!\!\!\!&\ \ \ +-0\\\left(\begin{array}{cc}0&b\\b&d\end{array}\right)&3&b>0,|d|=1,d\sim-d&+-0\\\left(\begin{array}{cc}0&b\\b&0\end{array}\right)&2&b>0&+-0\\\left(\begin{array}{cc}1&0\\0&d\end{array}\right)&3&d\in\co&+ \ 0\\\left(\begin{array}{cc}0&0\\0&1\end{array}\right)&1&&+\\\left(\begin{array}{cc}0&0\\0&0\end{array}\right)&1&&+\end{array}$\\
    \hline
    $\left(\begin{array}{cc}0&1\\0&0\end{array}\right)$&$\begin{array}{c|c|lr}\left(\begin{array}{cc}a&b\\b&1\end{array}\right)&3&b>0,a\in\co&+-0\\\left(\begin{array}{cc}1&b\\b&0\end{array}\right)&1&b>0&+-0\\\left(\begin{array}{cc}0&b\\b&0\end{array}\right)&1&b>0&+-0\\\left(\begin{array}{cc}a&0\\0&1\end{array}\right)&1&a\ge0&+ \ 0\\\left(\begin{array}{cc}1&0\\0&0\end{array}\right)&0&&0\\\left(\begin{array}{cc}0&0\\0&0\end{array}\right)&0&&\ \ \ \ \ \ \ \ \ \ \ \ \ \ \ \ \ \ \ \ \ \ \ \ \ \ \ \ \ \ \ \ \!0\end{array}$\\
    \hline
  \end{tabular}
 
\pagebreak

  Table 1, continued

  \begin{tabular}{|c|l|} \hline 
    $N$&\ \ \ \ \ \ $P$\\ \hline
    $\left(\begin{array}{cc}0&1\\1&i\end{array}\right)$&$\begin{array}{c|c|lr}\left(\begin{array}{cc}a&0\\0&d\end{array}\right)&3&a>0,d\in\co&+-0\\\left(\begin{array}{cc}0&b\\b&0\end{array}\right)&1&b>0&+ \ 0\\\left(\begin{array}{cc}0&0\\0&d\end{array}\right)&1&d\ge0&+\end{array}$\\
    \hline
    $\left(\begin{array}{cc}1&0\\0&0\end{array}\right)$&$\begin{array}{c|c|lr}\left(\begin{array}{cc}a&0\\0&1\end{array}\right)&1&a\ge0&+-0\\\left(\begin{array}{cc}0&1\\1&0\end{array}\right)&0&&+\\\left(\begin{array}{cc}a&0\\0&0\end{array}\right)&1&a\ge0&\ \ \ \ \ \ \ \ \ \ \ \ \ \ \ 0\end{array}$\\
    \hline
    $\left(\begin{array}{cc}0&0\\0&0\end{array}\right)$&$\begin{array}{c|c|lr}\left(\begin{array}{cc}1&0\\0&1\end{array}\right)&0&&+\\\left(\begin{array}{cc}1&0\\0&0\end{array}\right)&0&&0\\\left(\begin{array}{cc}0&0\\0&0\end{array}\right)&0&&\ \ \ \ \ \ \ \ \ \ \ \ \ \ \ \ \ \ \ \ \ \  0\end{array}$\\
    \hline
  \end{tabular}.
\end{center}

\begin{thm}\label{thm7.2}
  Given a real analytic 4-dimensional submanifold $M$ in $\co^3$ with
  local defining equation \mbox{\rm (\ref{eq38})} in one of the normal
  forms from Theorem \mbox{\rm \ref{thm4.1}}, if the quadratic part of
  $h(z,\z)$ in \mbox{\rm (\ref{eq38})} is real valued, then the coefficient
  matrices $N$, $P$ fall into exactly one of the cases from the
  following Examples \mbox{\rm \ref{ex7.3}} -- \mbox{\rm \ref{ex7.7}}.
  \boxx
\end{thm}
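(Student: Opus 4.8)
The plan is to treat Theorem \ref{thm7.2} as a direct consequence of Theorem \ref{thm4.1} together with the characterization of quadratic flatness recorded in Subsection \ref{qnf}, so that the real content reduces to a short linear-algebra observation plus a bookkeeping comparison with Table~1.

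First I would translate the hypothesis into a condition on the coefficient matrices. For a defining equation already in the form (\ref{eq38}), the quadratic part of $h(z,\z)$ is $(\z_1,\z_2)N\binom{z_1}{z_2}+\rp\left((z_1,z_2)P\binom{z_1}{z_2}\right)$, whose second summand is real valued by construction; hence the quadratic part is real valued if and only if $\z^TNz$ takes only real values for $z\in\co^2$. Writing $H=N-\bar N^T$, which is skew-Hermitian, the form $z\mapsto\z^THz$ is purely imaginary and vanishes identically if and only if $H=0$; so the quadratic part is real valued precisely when $N$ is Hermitian symmetric. (This is the same equivalence, (I) $\Leftrightarrow$ (III), already invoked in Subsection \ref{qnf}.)

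Next I would run through the matrices $N$ heading the rows of Table~1 and isolate the Hermitian ones: $\mathrm{diag}(1,e^{i\theta})$ with $0<\theta<\pi$ has a non-real diagonal entry, $\left(\begin{smallmatrix}0&1\\\tau&0\end{smallmatrix}\right)$ with $0\le\tau<1$ would require $\tau=1$, and $\left(\begin{smallmatrix}0&1\\0&0\end{smallmatrix}\right)$ and $\left(\begin{smallmatrix}0&1\\1&i\end{smallmatrix}\right)$ are plainly not Hermitian. What remains is $N\in\{\mathbbm 1,\ \mathrm{diag}(1,-1),\ \left(\begin{smallmatrix}0&1\\1&0\end{smallmatrix}\right),\ \mathrm{diag}(1,0),\ 0_{2\times2}\}$ --- exactly the $N$'s occurring in Cases (1b), (1c), (4), (5) of Subsection \ref{qnf} (the pairs from Case (1c) being displayed there with either the $\mathrm{diag}(1,-1)$ or the $\left(\begin{smallmatrix}0&1\\1&0\end{smallmatrix}\right)$ representative of $N$). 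For each such $N$ the companion normal forms for $P$ were already computed in Subsection \ref{qnf}, and the resulting list of pairs $(N,P)$ is precisely the list collected in Examples \ref{ex7.3} -- \ref{ex7.7}. The ``exactly one'' assertion is then inherited from Theorem \ref{thm4.1}: since $(N,P)$ lies in exactly one row of Table~1 and the rows just singled out form a subcollection, at most---hence exactly---one of the cases of Examples \ref{ex7.3} -- \ref{ex7.7} applies.

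The one step needing genuine care, and the closest thing here to an obstacle, is verifying that Examples \ref{ex7.3} -- \ref{ex7.7} faithfully reproduce the Hermitian-$N$ sublist of Table~1 --- that no row was dropped, merged, or duplicated in transcription; this is a finite comparison, but it is where an error could hide. The linear-algebra step and the elimination of the non-Hermitian $N$'s are routine.
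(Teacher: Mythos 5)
Your proof is correct and is essentially the argument the paper intends (the theorem is stated with no separate proof, as a corollary of Theorem \ref{thm4.1} together with the observation that $\z^TNz$ is real valued for all $z$ iff $N$ is Hermitian): restricting Table~1 to the Hermitian $N$'s — $\mathbbm 1$, $\operatorname{diag}(1,-1)$ with its alternate representative $\left(\begin{smallmatrix}0&1\\1&0\end{smallmatrix}\right)$, $\operatorname{diag}(1,0)$, and $0$ — and checking that Examples \ref{ex7.3}--\ref{ex7.7} list exactly those rows (refined into disjoint, exhaustive sub-ranges by the discrete invariants) is all that is needed. The one point you flag as needing care, the row-by-row comparison, does check out.
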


The whole numbers in the middle columns are invariants of the defining
function $h(z,\z)$ under the group of local biholomorphic coordinate changes
that preserves the property of being in a quadratically flat normal
form.

As previously mentioned, related lists of normal forms for pairs have
appeared in \cite{iz}, \cite{e}, and \cite{cs}.

\pagebreak

\begin{example}\label{ex7.3}
  For $N=\left(\begin{array}{cc}1&0\\0&1\end{array}\right)$,
    $\rho(N)=2$, $\sigma(N)=2$,
\begin{center}
  \begin{tabular}{|c|c|c|c|c|c|c|}
    \hline
    $P$&&$\rho(P)$&$\rho(N|P)$&$\rho(\Gamma)$&$\sigma(\Gamma)$&sign$(\det(\Gamma))$ \\ \hline 
    $\left(\begin{array}{cc}0&0\\0&0\end{array}\right)$&&$0$&$2$&$4$&$4$&$+$ \\ \hline
    $\left(\begin{array}{cc}0&0\\0&d\end{array}\right)$&$0<d<1$&$1$&$2$&$4$&$4$&$+$ \\ \hline
    $\left(\begin{array}{cc}0&0\\0&1\end{array}\right)$&&$1$&$2$&$3$&$3$&$0$ \\ \hline
    $\left(\begin{array}{cc}0&0\\0&d\end{array}\right)$&$1<d$&$1$&$2$&$4$&$2$&$-$ \\ \hline
    $\left(\begin{array}{cc}a&0\\0&d\end{array}\right)$&$0<a\le d<1$&$2$&$2$&$4$&$4$&$+$ \\ \hline
    $\left(\begin{array}{cc}a&0\\0&1\end{array}\right)$&$0<a<1$&$2$&$2$&$3$&$3$&$0$ \\ \hline
    $\left(\begin{array}{cc}a&0\\0&d\end{array}\right)$&$0<a<1<d$&$2$&$2$&$4$&$2$&$-$ \\ \hline
    $\left(\begin{array}{cc}1&0\\0&1\end{array}\right)$&&$2$&$2$&$2$&$2$&$0$ \\ \hline
    $\left(\begin{array}{cc}1&0\\0&d\end{array}\right)$&$1<d$&$2$&$2$&$3$&$1$&$0$ \\ \hline
    $\left(\begin{array}{cc}a&0\\0&d\end{array}\right)$&$1<a\le d$&$2$&$2$&$4$&0&$+$ \\ \hline

  \end{tabular}.
\end{center}
The diagonal elements of $P$ in the above table are the previously
mentioned ``generalized Bishop invariants.''  This is the only Example
where cases with $\sigma(\Gamma)=4$ occur, i.e., where $\Gamma$ is
definite, or, equivalently, where the real Hessian is definite, as
discussed in Subsection \ref{adf}.  These are the points called ``flat
elliptic'' points by \cite{dtz} and \cite{d}, and they appeared in the
flat embedding of the ellipsoid in Example \ref{ex6.1}.  The
definiteness of $\Gamma$ characterizes this elliptic property among
all the equivalence classes from Theorem \ref{thm7.2} and Examples
\ref{ex7.3} -- \ref{ex7.7}.  The first line in the above chart, where
$P=0$, represents the case considered by \cite{hy}.

We also see a difference from the real surface $M\subseteq\co^2$ case,
where a complex point (in $N_1^+$) has the elliptic property if and
only if $ind=+1$.  That characterization does not generalize to
dimensions $m=4$, $n=3$; even in the above table, flat elliptic
complex points have $ind=+1$, but so do some quadratically flat,
non-elliptic points.

The normal forms in the above table with full rank $\rho(\Gamma)=4$
and indefinite signature $\sigma(\Gamma)=2$ or $0$ represent the
equivalence classes of points called ``hyperbolic'' by \cite{d}.
\end{example}

\pagebreak

\begin{example}\label{ex7.4}
  For $N=\left(\begin{array}{cc}1&0\\0&-1\end{array}\right)$,
    $\rho(N)=2$, $\sigma(N)=0$,
\begin{center}
  \begin{tabular}{|c|c|c|c|c|c|c|}
    \hline
    $P$&&$\rho(P)$&$\rho(N|P)$&$\rho(\Gamma)$&$\sigma(\Gamma)$&sign$(\det(\Gamma))$ \\ \hline 
    $\left(\begin{array}{cc}0&0\\0&0\end{array}\right)$&&$0$&$2$&$4$&$0$&$+$ \\ \hline
    $\left(\begin{array}{cc}0&0\\0&d\end{array}\right)$&$0<d<1$&$1$&$2$&$4$&$0$&$+$ \\ \hline
    $\left(\begin{array}{cc}0&0\\0&1\end{array}\right)$&&$1$&$2$&$3$&$1$&$0$ \\ \hline
    $\left(\begin{array}{cc}0&0\\0&d\end{array}\right)$&$1<d$&$1$&$2$&$4$&$2$&$-$ \\ \hline
    $\left(\begin{array}{cc}a&0\\0&d\end{array}\right)$&$0<a\le d<1$&$2$&$2$&$4$&$0$&$+$ \\ \hline
    $\left(\begin{array}{cc}a&0\\0&1\end{array}\right)$&$0<a<1$&$2$&$2$&$3$&$1$&$0$ \\ \hline
    $\left(\begin{array}{cc}a&0\\0&d\end{array}\right)$&$0<a<1<d$&$2$&$2$&$4$&$2$&$-$ \\ \hline
    $\left(\begin{array}{cc}1&0\\0&1\end{array}\right)$&&$2$&$2$&$2$&$0$&$0$ \\ \hline
    $\left(\begin{array}{cc}1&0\\0&d\end{array}\right)$&$1<d$&$2$&$2$&$3$&$1$&$0$ \\ \hline
    $\left(\begin{array}{cc}a&0\\0&d\end{array}\right)$&$1<a\le d$&$2$&$2$&$4$&0&$+$ \\ \hline
    $\left(\begin{array}{cc}1&1\\1&1\end{array}\right)$&&$1$&$2$&$4$&0&$+$ \\ \hline
    $\left(\begin{array}{cc}0&b\\b&0\end{array}\right)$&$0<b$&$2$&$2$&$4$&0&$+$ \\ \hline

  \end{tabular}.
\end{center}
In the above table, we see that the discrete invariants in the last
$5$ columns are repeated in a few cases, so they are not enough to
distinguish inequivalent matrix normal forms of different shapes (such
as diagonalizable or not).
\end{example}

\begin{example}\label{ex7.5}
  For $N=\left(\begin{array}{cc}0&1\\1&0\end{array}\right)$,
    $\rho(N)=2$, $\sigma(N)=0$,
\begin{center}
  \begin{tabular}{|c|c|c|c|c|c|c|}
    \hline
    $P$&&$\rho(P)$&$\rho(N|P)$&$\rho(\Gamma)$&$\sigma(\Gamma)$&sign$(\det(\Gamma))$ \\ \hline 
    $\left(\begin{array}{cc}0&b\\b&1\end{array}\right)$&$0<b\ne1$&$2$&$2$&$4$&$0$&$+$ \\ \hline
    $\left(\begin{array}{cc}0&1\\1&1\end{array}\right)$&&$2$&$2$&$3$&$1$&$0$ \\ \hline
    $\left(\begin{array}{cc}1&0\\0&d\end{array}\right)$&$0<\ip(d)$&$2$&$2$&$4$&$0$&$+$ \\ \hline
    \end{tabular}.
\end{center}
  Some of the rows in this table have the same $\rho$ and $\sigma$
  data as rows from the previous Example.
\end{example}

\pagebreak

\begin{example}\label{ex7.6}
  For $N=\left(\begin{array}{cc}1&0\\0&0\end{array}\right)$,
    $\rho(N)=1$, $\sigma(N)=1$,
\begin{center}
  \begin{tabular}{|c|c|c|c|c|c|c|}
    \hline
    $P$&&$\rho(P)$&$\rho(N|P)$&$\rho(\Gamma)$&$\sigma(\Gamma)$&sign$(\det(\Gamma))$ \\ \hline 
    $\left(\begin{array}{cc}0&0\\0&1\end{array}\right)$&&$1$&$2$&$4$&$2$&$-$ \\ \hline
    $\left(\begin{array}{cc}a&0\\0&1\end{array}\right)$&$0<a<1$&$2$&$2$&$4$&$2$&$-$ \\ \hline
    $\left(\begin{array}{cc}1&0\\0&1\end{array}\right)$&&$2$&$2$&$3$&$1$&$0$ \\ \hline
    $\left(\begin{array}{cc}a&0\\0&1\end{array}\right)$&$1<a$&$2$&$2$&$4$&$0$&$+$ \\ \hline
    $\left(\begin{array}{cc}0&1\\1&0\end{array}\right)$&&$2$&$2$&$4$&$0$&$+$ \\ \hline
    $\left(\begin{array}{cc}0&0\\0&0\end{array}\right)$&&$0$&$1$&$2$&$2$&$0$ \\ \hline
    $\left(\begin{array}{cc}a&0\\0&0\end{array}\right)$&$0<a<1$&$1$&$1$&$2$&$2$&$0$ \\ \hline
    $\left(\begin{array}{cc}1&0\\0&0\end{array}\right)$&&$1$&$1$&$1$&$1$&$0$ \\ \hline
    $\left(\begin{array}{cc}a&0\\0&0\end{array}\right)$&$1<a$&$1$&$1$&$2$&$0$&$0$ \\ \hline
  \end{tabular}.
\end{center}
\end{example}

\begin{example}\label{ex7.7}
  For $N=\left(\begin{array}{cc}0&0\\0&0\end{array}\right)$,
    $\rho(N)=0$, $\sigma(N)=0$,
\begin{center}
  \begin{tabular}{|c|c|c|c|c|c|c|}
    \hline
    $P$&&$\rho(P)$&$\rho(N|P)$&$\rho(\Gamma)$&$\sigma(\Gamma)$&sign$(\det(\Gamma))$ \\ \hline 
    $\left(\begin{array}{cc}1&0\\0&1\end{array}\right)$&&$2$&$2$&$4$&$0$&$+$ \\ \hline
    $\left(\begin{array}{cc}1&0\\0&0\end{array}\right)$&&$1$&$1$&$2$&$0$&$0$ \\ \hline
    $\left(\begin{array}{cc}0&0\\0&0\end{array}\right)$&&$0$&$0$&$0$&$0$&$0$ \\ \hline
    \end{tabular}.
\end{center}
\end{example}

\end{document}